\DeclareMathOperator{\arcsinc}{arcsinc}
\DeclareFontFamily{U}{tipa}{}
\DeclareFontShape{U}{tipa}{m}{n}{<->tipa10}{}
\newcommand{\arc@char}{{\usefont{U}{tipa}{m}{n}\symbol{62}}}%
\newcommand{\arc}[1]{\mathpalette\arc@arc{#1}}
\newcommand{\arc@arc}[2]{%
  \sbox0{$\m@th#1#2$}%
  \vbox{
    \hbox{\resizebox{\wd0}{\height}{\arc@char}}
    \nointerlineskip
    \box0
  }%
}
\newtheorem{theorem}{Theorem}[section]
\newtheorem{conjecture}{Conjecture}
\newtheorem{lemma}[theorem]{Lemma}
\newtheorem{proposition}[theorem]{Proposition}
\newtheorem{definition}[theorem]{Definition}
\newtheorem{remark}[theorem]{Remark}
\newcommand{\R}{\mathbb{R}}
\newcommand{\N}{\mathbb{N}}
\renewcommand{\S}{\mathcal{S}}
\newcommand{\C}{\mathcal{C}}
\newcommand{\K}{\mathcal{K}}
\newcommand{\D}{\mathcal{D}}
\newcommand{\Om}{\Omega}
\newcommand{\rmnote}[1]{}
\newcommand{\abs}[1]{{\left|#1\right|}}
\newcommand{\norma}[1]{{\left\Vert#1\right\Vert}}
\newcounter{mnotecount}[section]
\title{Sharp inequalities involving the Cheeger constant of planar convex sets}
\author{Ilias Ftouhi, Alba Lia Masiello, Gloria Paoli}
\date{\today}
\address[Ilias Ftouhi]{Friedrich-Alexander-Universit{\"a}t Erlangen-N{\"u}rnberg, Department of Mathematics, Chair of Applied Analysis (Alexander von Humboldt Professorship), Cauerstr. 11, 91058 Erlangen, Germany.}
\email{ilias.ftouhi@fau.de}
\address[Alba Lia Masiello (Corresponding Author)]{Dipartimento di Matematica e Applicazioni "R. Caccioppoli", Universita degli Studi di Napoli "Federico II", Complesso Universitario Monte S. Angelo, via Cintia - 80126 Napoli, Italy.}
\email{albalia.masiello@unina.it}
\address[Gloria Paoli]{Dipartimento di Matematica e Applicazioni "R. Caccioppoli", Universita degli Studi di Napoli "Federico II", Complesso Universitario Monte S. Angelo, via Cintia - 80126 Napoli, Italy.}
\email{gloria.paoli@unina.it}
\begin{document}

\maketitle
\begin{abstract}
We are interested in finding sharp bounds for the Cheeger constant $h$ via different geometrical quantities, namely the area $|\cdot|$, the perimeter $P$, the inradius $r$, the circumradius $R$, the minimal width $\omega$ and the diameter $d$. We provide new sharp inequalities between these quantities for planar convex bodies and enounce new conjectures based on numerical simulations.  In particular, we completely solve the Blaschke-Santal\'o diagrams describing all the possible inequalities involving the triplets $(P,h,r)$, $(d,h,r)$ and $(R,h,r)$ and describe some parts of the boundaries of the diagrams of the triplets $(\omega,h,d)$, $(\omega,h,R)$, $(\omega,h,P)$, $(\omega,h,|\cdot|)$, $(R,h,d)$ and $(\omega,h,r)$.  

\vspace{1mm}
\textsc{Keywords:} Cheeger constant, convex sets, Blaschke--Santal\'o diagrams, sharp inequalities. \\
\textsc{MSC 2020:}  52A10, 52A40, 65K15.
\end{abstract}

\section{Introduction}
Let $\Om$ be a bounded subset of $\R^2$. The Cheeger constant of $\Om$, introduced in  and by Jeff Cheeger in \cite{cheger} in connection with the first eigenvalue of the Laplacian, is defined as 
\begin{equation}
    \label{chee}
    h(\Om):=\inf\left\{\frac{P(E)}{\abs{E}} \,  : \, E \, \text{measurable, } \, E\subseteq\Om, \, \abs{E}>0\right\},
\end{equation}
where $P(E)$ is the perimeter of $E$ in the sense of De Giorgi and $\abs{E}$ is the area of $E$. 
The minimum in \eqref{chee} is achieved when $\Om$ has Lipschitz boundary, see as a reference \cite{parini}, and the set $E$ that realizes this minimum is called a \emph{Cheeger set} of $\Om$. For the properties of the Cheeger constant and for an introductory survey, see for example \cite{caselles,kawohl,parini}. In particular, in the case of convex sets, the authors in \cite{caselles} prove that the Cheeger set is unique and, in this case, we will denote it by $C_\Om$. At last, a complete characterization of the Cheeger sets of planar convex bodies is provided  in \cite{kawohl}.

The problem of finding the Cheeger constant of a domain has been widely considered and has several applications (see \cite{parini} for a general overview). One of the possible interpretations of the Cheeger constant can be found for instance in the context of maximal flow and minimal cut problems (see \cite{strang}) which has applications in medical images processing (see \cite{appleton}). The Cheeger problem appears also in the study of plate failure under stress (see \cite{keller}).
It is then useful to have estimates of the Cheeger constant in terms of geometric quantities that can be easily computed. 

In the present paper, we are interested in describing all the possible inequalities involving the Cheeger constant of a given compact and convex set $\Om\subset\R^2$ with nonempty interior and the following geometrical quantities: the area $\abs{\Om}$, the perimeter $P(\Om)$, the inradius $r(\Om)$, the circumradius $R(\Om)$, the minimal width $\omega(\Om)$ and the diameter $d(\Om)$. We are then  aiming to study the Blaschke--Santal\'o diagrams involving those functionals and collect them all in one single paper together with new conjectures. 

A Blaschke--Santal\'o diagram is a tool that allows to visualize all the possible inequalities between three geometric quantities. More precisely, we consider three homogeneous shape functionals $(J_1,J_2,J_3)$, that is to say that for every $i\in\{1,2,3\}$ there exists $\alpha_i\in \R$ such that $J_i(t\Omega)=t^{\alpha_i}J_i(\Om)$ for every $t>0$, and  we want to find a system of inequalities describing the set
$$\{(J_1(\Om), J_2(\Om))|\  J_3(\Om)=1, \, \Om \in \mathcal{K}^2\},$$
where we denote by $\mathcal{K}^2$ the class of planar, compact and convex sets with nonempty interior. 

This kind of diagram was introduced by Blaschke in \cite{blaschke2}, in order to investigate all the possible relations between the volume, the surface area and the integral mean curvature in the class of compact convex sets in $\R^3$. 
Following the idea of Blaschke, Santal\'o in \cite{santalo} proposed the study of these diagrams for all the triplets of the following geometrical quantities: area, perimeter, inradius, circumradius, minimal width and diameter. These diagrams were studied for the class of convex sets and six of them are still not completely solved. We refer to the introduction in \cite{delyon2} for an accurate and updated state of art. 
Moreover, for classical results about Blaschke--Santal\'o diagrams, we refer for example to \cite{cifre3,cifre4,cifre2,cifre_salinas, cifre, cifre_gomis,santalo} and for more recent results, we provide the following non-exhaustive list of works \cite{MR3653891,branden,delyon2,bs-numerics,MR4431498, FL21, LZ}. 

In \cite{ftJMAA} and \cite{ftouhi_cheeger} the author studies Blaschke--Santal\'o diagram involving the Cheeger constant. 
More precisely, in \cite{ftJMAA}, it the Blaschke--Santal\'o diagram involving the Cheeger constant, the area and the inradius is fully characterized. It is proved that, if $\Om$ in $\mathcal{K}^2$, then
\begin{equation}\label{eq:hra}
    \frac{1}{r(\Omega)}+\frac{\pi r(\Omega)}{|\Omega|}\leq h(\Omega) \leq \frac{1}{r(\Omega)}+\sqrt{\frac{\pi}{|\Omega|}}, 
    \end{equation}
where the upper bound in \eqref{eq:hra} is achieved by (and only by) sets that are homothetic to their form bodies (see Definition \ref{formbody}), for instance, sets that are homothetic to their form bodies, meanwhile the lower one is achieved by (and only by) stadiums. On the other hand, in \cite{ftouhi_cheeger}, the diagram involving the Cheeger constant, the area and the perimeter is fully characterized and it is proved that if $\Om\in\mathcal{K}^2$, then
  \begin{equation}\label{eq:hpa}
    \frac{P(\Omega)+\sqrt{4\pi|\Omega|}}{2|\Omega|}\leq h(\Omega)\leq \frac{P(\Omega)}{|\Omega|},    
    \end{equation}
where the upper bound is achieved by any set that is Cheeger of itself (in particular stadiums), meanwhile the lower one is achieved, for example, by circumscribed polygons. 

Now, let us state the main results of the paper.  In order to do so, we need to define the following classes of admissible sets (we refer to \cite[Table 2.1]{inequalities_convex} for the associated constraints):
\vspace{2mm}
\begin{enumerate}
    \item $\displaystyle{\mathcal{K}^2_{P,r}=\{\Om \in \mathcal{K}^2: \,  P(\Om)=P, \, r(\Om)=r \}}$, where $P\ge 2\pi r$;
    \vspace{2mm}
    \item $\mathcal{K}^2_{d,r}=\{\Om \in \mathcal{K}^2: \,  d(\Om)=d,  \, r(\Om)=r\}$, where $d\ge 2 r$;
    \vspace{2mm}
    \item $\mathcal{K}^2_{R,r}=\{\Om \in \mathcal{K}^2:  \, R(\Om)=R, \, r(\Om)=r\}$, where $R\ge r$;
    \vspace{2mm}
     \item $\mathcal{K}^2_{\omega,d}=\{\Om \in \mathcal{K}^2: \, \omega(\Om)=\omega, \, d(\Om)=d\}$, where $\omega\le d$;
    \vspace{2mm}
       \item $\mathcal{K}^2_{\omega,R}=\{\Om \in \mathcal{K}^2:  \, \omega(\Om)=\omega, \,  R(\Om)=R\}$, where $2R\ge \omega$;
    \vspace{2mm}
    
    \item $\mathcal{K}^2_{\omega,P}=\{\Om \in \mathcal{K}^2: \, \omega(\Om)=\omega, \, P(\Om)=P\}$, where $P\ge \pi \omega$;
    \vspace{2mm}
    \item $\mathcal{K}^2_{\omega,A}=\{\Om \in \mathcal{K}^2:  \, \omega(\Om)=\omega, \, \abs{\Om}=A\}$, where $\sqrt{3}A\ge \omega^2$;
    \vspace{2mm}
    \item $\mathcal{K}^2_{R,d}=\{\Om \in \mathcal{K}^2:  \, R(\Om)=R, \, d(\Om)=d\}$, where $\sqrt{3}R\le d< 2R$;
    \vspace{2mm}
    \item $\mathcal{K}^2_{\omega,r}=\{\Om \in \mathcal{K}^2: \, \omega(\Om)=\omega, \, r(\Om)=r\}$, where $2r< \omega\le 3r$;
    \vspace{2mm}
    \item $\mathcal{K}^2_{R,A}=\{\Om \in \mathcal{K}^2: \, R(\Om)=R, \, \abs{\Om}=A\}$, where $A\le \pi R^2$;
    \vspace{2mm}
    \item $\mathcal{K}^2_{P,R}=\{\Om \in \mathcal{K}^2:  \, P(\Om)=P, \, R(\Om)=R\}$, where $4 R<P\le 2\pi R$;
    \vspace{2mm}
     \item $\mathcal{K}^2_{P, d}=\{\Om \in \mathcal{K}^2: \,  P(\Om)=P, \, d(\Om)=d\}$, where $2 d<P\leq \pi d$;
  \vspace{2mm}
    \item $\mathcal{K}^2_{d,A}=\{\Om \in \mathcal{K}^2: \, d(\Om)=d, \, \abs{\Om}=A\}$, where $\pi d^2\ge 4A$.
\end{enumerate}
Firstly, let us state the following existence result.
\begin{theorem}\label{th:existence}
Let $\Om\in \mathcal{K}^2$, then the minimization and the maximization shape optimization problems of the Cheeger constant $h(\Om)$ admit a solution in the classes of sets defined in $(1)-(13)$. 
 \end{theorem}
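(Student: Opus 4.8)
The plan is to establish existence via the direct method of the calculus of variations, exploiting the facts that each admissible class $\mathcal{K}^2_{\bullet}$ is (after a suitable normalization) compact for the Hausdorff distance, and that the Cheeger constant is continuous on $\mathcal{K}^2$ with respect to Hausdorff convergence. First I would recall (or cite) the continuity statement: if $\Om_n \to \Om$ in the Hausdorff metric with $\Om_n, \Om \in \mathcal{K}^2$, then $h(\Om_n) \to h(\Om)$. For convex bodies this follows from the fact that Hausdorff convergence of convex bodies with nonempty interior implies convergence of the area and of the perimeter, and from the sandwiched monotonicity of $h$ (together with the inner/outer approximation by scaled copies), or one may simply invoke the known continuity of the Cheeger constant on convex sets. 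I would state this as a lemma, or reference it from \cite{parini,ftouhi_cheeger}.

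Next I would treat compactness of the admissible classes. Fix one of the thirteen classes, say $\mathcal{K}^2_{P,r}$ with the two prescribed values $P$ and $r$ satisfying the stated constraint $P \geq 2\pi r$. A minimizing (resp.\ maximizing) sequence $(\Om_n)$ has uniformly bounded diameter: indeed, for a planar convex body one always has $d(\Om) \leq P(\Om)/2$, and for classes where the perimeter is not directly prescribed one bounds the diameter through the prescribed quantities using the elementary inequalities listed in \cite[Table 2.1]{inequalities_convex} — for instance $d \leq \pi R$ is false but $d \leq 2R$, $\omega \leq d$, $P \leq \pi d$, $|\Om| \leq \pi R^2$, etc., give in every one of the cases $(1)$–$(13)$ an a priori upper bound on $d(\Om_n)$ in terms of the two fixed parameters. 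By translating we may assume all $\Om_n$ lie in a fixed ball, so by the Blaschke selection theorem a subsequence converges in the Hausdorff metric to a convex body $\Om_\infty$. We must check $\Om_\infty \in \mathcal{K}^2$, i.e.\ that it has nonempty interior: this uses the lower bound built into each constraint (a prescribed positive inradius, minimal width, area, or — via isodiametric/isoperimetric type bounds — a prescribed perimeter or diameter together with the other parameter) which prevents degeneration to a segment. Finally, the geometric functionals $r, R, \omega, d, P, |\cdot|$ are all continuous for Hausdorff convergence on convex bodies with nonempty interior, so the two prescribed constraints pass to the limit and $\Om_\infty$ belongs to the same class.

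Combining the two ingredients, along the convergent subsequence $h(\Om_n) \to h(\Om_\infty)$, so $\Om_\infty$ realizes the infimum (resp.\ supremum) of $h$ over the class, which is therefore a minimum (resp.\ maximum). Since the argument is uniform across the list, it covers all classes $(1)$–$(13)$ simultaneously.

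The main obstacle is the verification, case by case, that no minimizing/maximizing sequence degenerates — that is, that the limiting body still has nonempty interior and still satisfies \emph{both} prescribed constraints rather than dropping into a boundary stratum of the parameter space. For the classes where an inradius, width, or area is prescribed this is immediate; the slightly more delicate ones are those prescribing only a pair among $\{P, d, R\}$ (classes $(8)$, $(11)$, $(12)$), where one must argue that, say, the perimeter constraint together with the diameter constraint forces a definite lower bound on the minimal width — otherwise a thin degenerate sequence could have $P \to 2d$. This is handled by the strict inequalities imposed in the definitions (e.g.\ $4R < P$, $2d < P$, $\sqrt 3 R \leq d < 2R$), which quantitatively bound the body away from segments; I would spell out one representative case and note that the others are analogous using \cite[Table 2.1]{inequalities_convex}.
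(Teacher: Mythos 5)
Your proposal is correct and follows essentially the same route as the paper: a minimizing/maximizing sequence, a uniform diameter bound from the classical planar inequalities, Blaschke selection, stability of the two prescribed functionals under Hausdorff convergence, exclusion of degeneration to a segment (immediate when $r$, $\omega$ or $|\cdot|$ is prescribed, and via explicit lower bounds on the inradius coming from the strict inequalities $P>2d$, $P>4R$, $d<2R$ in the remaining cases), and finally the continuity of $h$ on non-degenerating convex sets. The only difference is one of detail: the paper writes out the specific inradius lower bounds (e.g.\ $r\geq P/4-d/2$ and Santal\'o's bound in terms of $d$ and $R$) that you defer to a representative case.
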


In the following theorem, we consider the triplets of functionals for which we are able to provide the complete description of the related Blaschke--Santaló diagrams. For the precise definitions of the below-mentioned extremal sets, see Section \ref{ext} and for the explicit bounds, see Propositions \ref{prop_hrP}, \ref{prop_hdr} and \ref{prop_hRr}. At last, for the description of the corresponding diagrams we refer to Proposition \ref{blaschke}. 
\begin{theorem}\label{th2}
The following results hold
\begin{enumerate}[label=(\roman*)] 
\item The maximum and the minimum of the Cheeger constant in $\mathcal{K}^2_{P,r}$ are respectively achieved  by sets that are homothetic to their form bodies and by stadiums.
 \vspace{2mm}
 \item The maximum of the Cheeger constant in $\mathcal{K}^2_{d,r}$ is achieved by symmetrical two-cup bodies. On the other hand, there exists $D_0>0$ such that if $d\geq r D_0$, then the minimum of the Cheeger constant in $\mathcal{K}^2_{d,r}$ is achieved by symmetrical spherical slices, while, if $d<rD_0$, the minimum is achieved by regular smoothed nonagons.
    \vspace{2mm}
 \item The maximum and the minimum of the Cheeger constant in $\mathcal{K}^2_{R,r}$ are respectively achieved  by symmetrical two-cup bodies and symmetrical spherical slices.
\end{enumerate}

\end{theorem}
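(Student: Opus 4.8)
The plan is to attack the three items of Theorem \ref{th2} by the same general strategy: reduce the optimization of $h$ under two geometric constraints to an optimization over a one-parameter family of candidate extremal sets, using the explicit formula for the Cheeger constant of a planar convex body together with the known bounds \eqref{eq:hra}--\eqref{eq:hpa} and the classical Blaschke--Santal\'o inequalities relating $P,r,R,d,\omega,|\cdot|$. Recall (from \cite{kawohl}, used earlier in the excerpt) that for $\Om\in\mathcal K^2$ the Cheeger constant is the unique $h>0$ such that $\abs{\Om^{-1/h}}=\pi/h^2$, where $\Om^{-t}$ denotes the inner parallel set at distance $t$; equivalently $h=h(\Om)$ solves $P(\Om^{-1/h})=\pi/h$. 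This makes $h$ a monotone-type functional of the inner parallel body, which is the key analytic handle.

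First I would treat item (i), the diagram for $(P,h,r)$. Since $P$ and $r$ are fixed, writing the Cheeger equation in the form $\abs{\Om}-\tfrac1h P(\Om)+\tfrac{\pi}{h^2}=\pi/h^2$, i.e. $\abs{\Om}= \tfrac{P}{h}-\tfrac{\pi}{h^2}+\tfrac{\pi}{h^2}$ — more precisely using Steiner's formula $\abs{\Om^{-t}}=\abs{\Om}-P t+\pi t^2$ valid for $t\le r$ — one sees that $h$ is determined by $\abs{\Om}$ once $P$ is fixed (and one must check $1/h\le r$, which holds precisely because $P\ge 2\pi r$ forces the Cheeger set to be an inner parallel body; this is where the constraint $P\ge2\pi r$ on $\mathcal K^2_{P,r}$ enters). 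Hence maximizing $h$ at fixed $(P,r)$ amounts to \emph{minimizing} $\abs{\Om}$ and minimizing $h$ to \emph{maximizing} $\abs{\Om}$ subject to $P(\Om)=P$, $r(\Om)=r$. The extremal sets for area under a perimeter-and-inradius constraint are classically known: the maximizers of area are the sets homothetic to their form bodies (equivalently the "leaf-like" sets saturating the form-body relation, cf.\ Definition \ref{formbody} and the discussion around \eqref{eq:hra}), and the minimizers are stadiums. Translating back, one gets item (i). The main work here is the careful justification that the Cheeger set is an inner parallel body on the whole admissible range, so that the Steiner-formula reduction is legitimate, and the identification of the area extremizers in $\mathcal K^2_{P,r}$ with the stated sets — for which I would cite or reprove the relevant Blaschke--Santal\'o result for $(P,r,|\cdot|)$ from \cite{santalo,inequalities_convex}.

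For items (ii) and (iii) the same philosophy applies but is genuinely harder, because with $(d,r)$ or $(R,r)$ fixed the area is no longer determined by $h$, so one cannot simply reduce to an isoperimetric-type sub-problem. Instead I would: (a) show the maximizers of $h$ are as "small-area-but-large-perimeter" as the constraints allow, and prove via a symmetrization/continuity argument (moving to the symmetric two-cup body, which is the extremal set for many $(d,r)$- and $(R,r)$-type problems, cf.\ \cite{inequalities_convex}) that the symmetrical two-cup body is optimal; the key inequality is a monotonicity of $h$ under the operation that enlarges a convex body towards its two-cup hull while keeping $d$ (resp.\ $R$) and $r$ fixed. (b) For the minimizers, one expects the extremal set to be the one that is "as round as possible" given $(d,r)$ or $(R,r)$: these are the symmetrical spherical slices. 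For $(R,r)$ (item (iii)) this should follow from a single optimization over the one-parameter family of spherical slices once one shows that any competitor can be deformed, without decreasing $h$ and without changing $R,r$, to such a slice — here \eqref{eq:hra} applied to the inner parallel structure, together with the fact that among sets with given $R,r$ the spherical slice minimizes perimeter and area simultaneously in the relevant regime, does the job. For $(d,r)$ (item (ii)) there is the extra subtlety that the minimizer changes type: for $d/r$ large it is the symmetrical spherical slice, but for $d/r$ close to $2$ (near-disk regime) it becomes the regular smoothed nonagon; so one must (c) set up the finite-dimensional optimization among both candidate families, compute $h$ explicitly on each (both have $h$ expressible through elementary functions of the defining angle/parameter via the inner-parallel-body equation), and locate the threshold $D_0$ where the two branches cross.

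The hard part will be step (b)--(c) for item (ii): proving that no convex body with the given diameter and inradius does better than the \emph{better of} the spherical-slice family and the smoothed-nonagon family, i.e.\ ruling out all other competitors. The natural tool is the characterization of Cheeger sets of convex bodies from \cite{kawohl} (the Cheeger set is $\Om^{-r_C}$ rounded by disks of radius $r_C=1/h$), which reduces $h$ to a function of the geometry of the inner parallel body; combined with the stability/rigidity in the Blaschke--Santal\'o diagram for $(d,r,|\cdot|)$ and $(d,r,P)$ (whose extremal sets are exactly two-cup bodies, spherical slices and smoothed regular polygons), one should be able to show that the pair $(P(\Om),|\Om|)$ of any admissible $\Om$ lies in a region on whose boundary $h$ is extremized only at these sets, and then a monotonicity argument in the interior finishes it. Making this "the optimizer lies on the boundary of the $(d,r,P,|\cdot|)$-region, and there $h$ is monotone" argument rigorous — in particular handling the non-smooth transition at $D_0$ and verifying the numerics that single out the nonagon (rather than some other smoothed polygon) — is where most of the effort and the only real risk of the proof lies.
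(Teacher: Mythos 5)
There are two genuine gaps. First, in item (i) your reduction rests on the claim that, via Steiner's formula, $h(\Om)$ is determined by $P(\Om)$ and $\abs{\Om}$ alone. That is false: for inner parallel sets one only has the \emph{inequality} $\abs{\Om_{-t}}\le \abs{\Om}-P(\Om)t+\pi t^2$ (equality forces $\Om$ to be homothetic to its form body), and indeed the diagram \eqref{eq:hpa} shows that $h$ ranges over a nontrivial interval once $P$ and $\abs{\cdot}$ are fixed. Moreover you have swapped the area extremizers in $\mathcal{K}^2_{P,r}$: by \eqref{pra} the \emph{minimizers} of area at fixed $(P,r)$ are the sets homothetic to their form bodies and the \emph{maximizers} are the stadiums, so your ``max $h$ $\leftrightarrow$ min area $\leftrightarrow$ stadiums'' chain would output the wrong extremal sets. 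The correct (and much shorter) argument is simply to plug the two-sided bound \eqref{pra} into the two-sided bound \eqref{eq:hra} and check that the equality cases of the inequalities being combined are compatible (form-body homothets on one side, stadiums on the other); no exact Steiner identity is needed or available.

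Second, for the minimization in items (ii) and (iii) your plan (``the pair $(P(\Om),\abs{\Om})$ lies in a region on whose boundary $h$ is extremized, then a monotonicity argument in the interior'') does not actually rule out general competitors, and you acknowledge this is where the risk lies. The missing idea is the comparison lemma the paper isolates as Lemma \ref{lem:main}: by Kawohl--Lachand-Robert, $h(\Om)=1/t_\Om$ where $t_\Om$ is the unique root of $\abs{\Om_{-t}}=\pi t^2$; hence if one has a \emph{pointwise-in-$t$} upper bound $\abs{\Om_{-t}}\le g^\Om(t)$, then $h(\Om)\ge 1/t_{g^\Om}$ with $t_{g^\Om}$ the first crossing of $g^\Om$ with $\pi t^2$. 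One then feeds in the sharp area bounds $\abs{\Om_{-t}}\le\psi(d-2t,r-t)$ (resp.\ $\varphi(R-t,r-t)$), using $r(\Om_{-t})=r-t$, $d(\Om_{-t})\le d-2t$, $R(\Om_{-t})\le R-t$ and the monotonicity of $\psi,\varphi$. This single mechanism disposes of \emph{all} competitors at once; sharpness follows because slices (and, for small $d/r$, smoothed nonagons) satisfy $(\S_{r,d})_{-t}=\S_{r-t,d-2t}$ so the bound is attained along the whole deformation. The threshold $D_0$ is then not simply ``where the two branches cross'' in $h$: it is determined by comparing the abscissa $t^*$ at which the nonagon and slice area-profiles $t\mapsto\abs{(\mathcal N_{r,d})_{-t}}$ and $t\mapsto\abs{(\S_{r,d})_{-t}}$ intersect with the value $1/h(\mathcal N_{r,d})$, which decides which profile is the active envelope at the Cheeger root. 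Your maximization step (a) is closer to the mark, but again the paper does not need any symmetrization flow: it combines the upper bound in \eqref{eq:hra} with the sharp lower area bounds \eqref{del_2cap} and \eqref{secondhRr}, whose equality cases (two-cup bodies, which are homothetic to their form bodies) match.
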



As far as the classes of sets $\K^2_{\omega,d}$ $\K^2_{R,\omega}$, $\K^2_{\omega, P}$ and $\K^2_{A,\omega}$ are concerned, we are able to identify parts of the boundary of the corresponding Blasche- Santal\'o diagrams, see Propositions \ref{prop_hdw}, \ref{prop_hRw}, \ref{prop_hwP} and \ref{prop_hAw} for the explicit bounds. For the class  $\mathcal{K}^2_{d, R}$, we are able to solve the maximization problem, see Proposition \ref{prop_hRd}, meanwhile, for the class $\mathcal{K}^2_{\omega,r}$ we are able to solve the minimization one, see Proposition \ref{prop_hwr}. Throughout the paper, different strategies of proofs are used to obtain all the aforementioned results. 

As far as the classes $\K^2_{A,R}, \K^2_{R,P}, \K^2_{P,d}$ and $\K^2_{A,d}$ are concerned, we are not able to identify any parts of the boundaries of the corresponding Blasche--Santal\'o diagrams. Nevertheless, in the appendix we present the best bounds that we managed to obtain by combining the known inequalities involving those functionals. 

\vspace{3mm}

The paper is organized as follows: in Section \ref{sec2}, we state the preliminary results, the definitions used throughout the paper and the known inequalities relating the Cheeger constant to one of the geometric quantities taken into consideration. Section \ref{secnum} is dedicated to the description of the numerical methods used to compute the functionals and to approximate the Blaschke--Santal\'o diagrams. In Section \ref{sec3}, we prove the main results, that are  Theorem \ref{th:existence} and Theorem \ref{th2}. Section \ref{partial} is dedicated to the results that we have obtained for the classes of sets $\K^2_{\omega,d}$ $\K^2_{R,\omega}$, $\K^2_{\omega, P}$, $\K^2_{A,\omega}$, $\mathcal{K}^2_{d, R}$ and $ \mathcal{K}^2_{\omega,r}$.
Finally, in Section \ref{secult}, we state some relevant conjectures and collect all the inequalities proved in the paper in the Appendix. 

\section{Notations and Preliminaries}\label{sec2}
Throughout this article, $\norma{\cdot}$ will denote the Euclidean norm in $\mathbb{R}^2$,
 while $(\cdot)$ is the standard Euclidean scalar product in $\R^2$. We denote by $P(\Om)$ the perimeter of $\Omega$ and by $|\Om|$ the volume of $\Om$. Moreover, $B_r$ is the closed ball of radius $r>0$ centered at the origin, while $\mathbb{S}^1$ is the unit sphere in $\R^2$. In the following, we work with the class of sets $\mathcal{K}^2$, defined as 
\begin{equation*}
    \mathcal{K}^2:=\{ \Om\:|\: \Om \;\, \text{is a compact, bounded and convex set with nonempty interior of  }\; \R^2\}\setminus \{\emptyset\}.
\end{equation*}

\subsection{Classical results ad preliminary lemmas }

We provide the classical definitions and results that we need in the following.
\begin{definition}
    \label{minksum}
    Let $\Omega, K\subset \R^2$ two convex bounded sets. We define the \emph{Minkowski sum} $(+)$ and \emph{difference} $(\sim)$ as
    \begin{equation*}
        \label{sum}
        \Omega+K:=\{x+y \, : \, x\in \Omega, \, y\in K\},
    \end{equation*}
    \begin{equation*}
        \label{diff}
        \Omega\sim K:=\{x\in \R^2 \, : \, x+K\subseteq \Omega\}.
    \end{equation*}
\end{definition}

We now recall the definition of the Hausdorff distance.
\begin{definition}
Let $\Omega,K\subset\mathbb{R}^2$ be two non-empty compact sets, we define the Hausdorff distance between $\Om$ and $K$ as follows:
\begin{equation*}
\label{disth}
 d_{\mathcal{H}}(\Omega,K):=\inf \left\{  \varepsilon>0  \; :\; \Omega\subset K+B_{\varepsilon}, \; K\subset\Omega+B_{\varepsilon} \right\},
 \end{equation*}
 where $B_\varepsilon$ is the ball of radius $\varepsilon$ centered in the origin. 
\end{definition}
 
Let $\{\Omega_k\}_{k\in\N}$ be a sequence of non-empty, compact, bounded convex subsets of $\R^2$, we say that $\Omega_k$ converges to $\Om$ in the Hausdorff sense and we denote
\[
\Omega_k\stackrel{\mathcal H}{\longrightarrow} \Omega,
\]
if and only if $d_{\mathcal H}(\Omega_k,\Omega)\to 0$ as $k\to \infty$. 

We recall that by Blaschke's selection Theorem (see for example \cite[Theorem 1.8.7]{schneider}), every bounded sequence of nonempty compact convex sets has a subsequence that converges in the Hausdorff sense to a convex set.

Let us now recall the following definitions:
\begin{definition}
  Let $\Om\in\mathcal{K}^2$. The \emph{distance function from the boundary of} $\Om$ is the function $ {\rm dist}(\cdot, \partial \Omega):\Omega \to [0,+\infty[$ defined as
 $${\rm dist}(x,\partial\Omega)=\inf_{y\in\partial\Omega}\norma{x-y}.$$
  The \emph{inradius} of $\Omega$  is defined as	$$r(\Omega):=\sup_{x\in \Omega} {\rm dist}(x,\partial\Omega),$$
    and the \emph{circumradius} of $\Om$ is defined as
    $$R(\Om):= \min_{x\in\Om}\max_{y\in\partial\Om} \norma{x-y}.$$
   \end{definition}
Let us now introduce the support function of a convex set:
\begin{definition}\label{support}
Let $\Omega\in \mathcal{K}^2$. The \emph{support} function of $\Omega$ is defined as
    \begin{equation*}
        p_\Omega(y):=\max_{x\in \Omega} (x\cdot y), \qquad y\in \mathbb{R}^2.
    \end{equation*}
\end{definition}

 In this paper, we will also consider the minimal width (or thickness) of a convex set, that is to say the minimal distance between two parallel supporting hyperplanes. More precisely, we have 
\begin{definition}\label{width:def}
    Let $\Omega\in\mathcal{K}^2$. The width of $\Omega$ in the direction $y \in \mathbb{S}^1$ is defined as 
    \begin{equation*}
        \omega_{\Omega}(y):=p_\Omega(y)+p_\Omega( -y)
    \end{equation*}
    and the \emph{minimal width} of $\Omega$ as
\begin{equation*}
    \omega(\Omega):=\min\{  \omega_{\Omega}(y)\,|\; y\in\mathbb{S}^{1}\}.
\end{equation*}
\end{definition}

We introduce the inner parallel set of a convex set $\Omega$.                                  
\begin{definition}\label{def:inner_parallel}
    \label{inner_parallel}
    Let $\Omega$ be a bounded and convex set. The \emph{inner parallel set} of $\Omega$ at distance $t\in [0, r(\Om)]$ is
    \begin{equation*}
        \Omega_{-t}:=\{x\in \Omega \, : \, {\rm dist}(x,\partial\Omega)\ge t\}.
    \end{equation*}
\end{definition}

 \begin{remark}\label{rem:in}
 We remark that, by Definition \ref{def:inner_parallel}, we have 
$$\Om_{-t}=\Om \sim tB_1.$$
Moreover, we observe that for any $y\in \mathbb{S}^1$ and for every $\Om,K\in \mathcal{K}^2$ , one has
 \begin{equation*}
     p_{\Om\sim K}( y)\le p_\Om(y)-p_K(y),
 \end{equation*}
 see e.g. \cite[Section 3.1, page 148]{schneider}. 
 
 Therefore, in the case $K=tB_1$, this reads
 \begin{equation}\label{diff_parall}
     p_{\Om_{-t}}(y)\le p_\Om(y)-t.
 \end{equation}
 Moreover, as it is observed in \cite[Proposition 3.2]{jahn}, 
 one has
\begin{equation}
    \label{circpar}
    R(\Om+ K)\le R(\Om)+R(K)
\end{equation}
with equality if $K=tB_1$.
\end{remark}

We are now in position to prove the following lemma:
\begin{lemma}\label{lem:diameter_inner_set}
Let $\Om\in\mathcal{K}^2$. We have for every $ t\in [0,r(\Om)]$: 
\begin{align}
\label{inr}
     &r(\Om_{-t})= r(\Om)-t, \\ 
    \label{eq:diameter}
     &d(\Om_{-t})\leq d(\Om)-2t,\\
     \label{width}
     &\omega (\Om_{-t})\leq \omega (\Omega)-2t,\\
     \label{circumradius}
     &R(\Om_{-t})\leq R(\Om)-t,\\
     \label{perimeter}
     &P(\Om_{-t})\leq P(\Om)-2\pi t.
\end{align}
\end{lemma}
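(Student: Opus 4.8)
The plan is to prove the five inequalities in Lemma \ref{lem:diameter_inner_set} by exploiting the identity $\Om_{-t} = \Om \sim tB_1$ from Remark \ref{rem:in} together with the support-function estimate \eqref{diff_parall}. The first identity \eqref{inr} is essentially the definition: a point $x$ has ${\rm dist}(x,\partial \Om_{-t}) \geq s$ precisely when ${\rm dist}(x,\partial\Om) \geq s+t$, so balls of radius $r(\Om)-t$ fit inside $\Om_{-t}$ and no larger ball does; I would spell this out in one or two lines, noting that $\Om_{-t}$ is nonempty and convex for $t \le r(\Om)$ so its inradius is well defined. The remaining four bounds all follow the same template: each of $d$, $\omega$, $R$ is an extremal value of a quantity built from the support function, and \eqref{diff_parall} says $p_{\Om_{-t}}(y) \le p_\Om(y) - t$ for every $y \in \mathbb{S}^1$.

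Concretely, for the diameter \eqref{eq:diameter} I would use that $d(K) = \max_{y \in \mathbb{S}^1} \omega_K(y) = \max_{y \in \mathbb{S}^1}\bigl(p_K(y) + p_K(-y)\bigr)$ for a convex body $K$; applying \eqref{diff_parall} at $y$ and at $-y$ gives $\omega_{\Om_{-t}}(y) \le \omega_\Om(y) - 2t \le d(\Om) - 2t$ for all $y$, hence $d(\Om_{-t}) \le d(\Om) - 2t$. For the minimal width \eqref{width} the same two applications of \eqref{diff_parall} give $\omega_{\Om_{-t}}(y) \le \omega_\Om(y) - 2t$; taking the minimum over $y$ and using $\min_y \omega_\Om(y) = \omega(\Om)$ on the right after first minimizing the left side yields $\omega(\Om_{-t}) \le \omega(\Om) - 2t$. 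The circumradius bound \eqref{circumradius} is even more direct: it is exactly the equality case of \eqref{circpar} read backwards, since $\Om = \Om_{-t} + tB_1$ (for $t \le r(\Om)$ the Minkowski sum reconstructs $\Om$), so $R(\Om) = R(\Om_{-t} + tB_1) = R(\Om_{-t}) + R(tB_1) = R(\Om_{-t}) + t$, giving \eqref{circumradius} with equality; alternatively one invokes \eqref{circpar} directly.

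For the perimeter inequality \eqref{perimeter} I would argue via monotonicity and the Steiner-type formula: since $\Om = \Om_{-t} + tB_1$ and the perimeter of a Minkowski sum with a disk satisfies $P(K + tB_1) = P(K) + 2\pi t$ (the two-dimensional Steiner formula for the perimeter of an outer parallel body), we get $P(\Om) = P(\Om_{-t}) + 2\pi t$, which is in fact the equality version of \eqref{perimeter}. If one prefers not to invoke $\Om = \Om_{-t}+tB_1$, the same conclusion follows from monotonicity of perimeter under inclusion of convex sets combined with the fact that $\Om_{-t} + tB_1 \subseteq \Om$. I expect the main subtlety to be justifying the identity $\Om = \Om_{-t} + tB_1$ (equivalently $(\Om \sim tB_1) + tB_1 = \Om$), which holds for $t \le r(\Om)$ because $\Om$ has a ball of radius $r(\Om)$ inside it but can fail for larger $t$; this is where the hypothesis $t \in [0, r(\Om)]$ is genuinely used, and I would cite \cite{schneider} for the relevant statement about inner parallel bodies of convex sets. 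The rest is bookkeeping with support functions and poses no real difficulty.
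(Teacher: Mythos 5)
Your overall strategy is sound and mostly matches the paper's: \eqref{inr} from the semigroup property of inner parallel bodies, \eqref{width} from \eqref{diff_parall}, and \eqref{circumradius}, \eqref{perimeter} from the inclusion $\Om_{-t}+tB_1\subseteq\Om$ plus monotonicity. Your treatment of the diameter is genuinely different from the paper's (which picks diametrical points of $\Om_{-t}$, extends the chord to $\partial\Om$, and counts lengths); your route via $d(K)=\max_{y\in\mathbb{S}^1}\omega_K(y)$ and two applications of \eqref{diff_parall} is equally valid and arguably cleaner, since it makes \eqref{eq:diameter} and \eqref{width} two instances of the same computation.

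There is, however, one genuine error: the identity $\Om=\Om_{-t}+tB_1$, which you use to claim that \eqref{circumradius} and \eqref{perimeter} hold with \emph{equality}, is false in general, and your justification (``it holds for $t\le r(\Om)$ because $\Om$ contains a ball of radius $r(\Om)$'') is not the right criterion. One always has $(\Om\sim tB_1)+tB_1\subseteq\Om$, but equality holds only when $\Om$ is itself a Minkowski sum $L+tB_1$; for any polygon and any $t>0$ the set $\Om_{-t}+tB_1$ has rounded corners and is a proper subset of $\Om$. Concretely, for a triangle of inradius $r$ the inner parallel body is the homothety of ratio $(r-t)/r$, so $P(\Om_{-t})=P(\Om)-tP(\Om)/r<P(\Om)-2\pi t$ and $R(\Om_{-t})=R(\Om)-tR(\Om)/r<R(\Om)-t$: both inequalities are strict. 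Your proof survives only because you also offer the correct fallback — $\Om_{-t}+tB_1\subseteq\Om$, monotonicity of $R$ and $P$ under inclusion of convex sets, and the exact additivity $R(K+tB_1)=R(K)+t$, $P(K+tB_1)=P(K)+2\pi t$ — which is precisely the paper's argument and does yield \eqref{circumradius} and \eqref{perimeter} as inequalities. You should delete the equality claims and promote the fallback to the main argument.
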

\begin{proof}
\begin{itemize}
    \item The proof of \eqref{inr} can be found in \cite[Lemma 1.4]{larson}.
    \item Let us now prove \eqref{eq:diameter}.
Let $x_t,y_t\in \Om_{-t}$ be two diametrical points of $\Om_{-t}$ (i.e., such that $\|x_t-y_t\|=d(\Om_{-t})$). We denote by $x,y\in \Om$ the points corresponding to the intersection of the line containing $x_t$ and $y_t$ with the boundary of $\Om$. We have
$$d(\Om)\ge \|x-y\|=\|x-x_t\|+\|x_t-y_t\|+\|y_t-y\|=\|x-x_t\|+d(\Om_{-t})+\|y_t-y\|\ge d(\Om_{-t})+2t,$$
where the last inequality is a consequence of the fact that $x_t,y_t\in \Om_{-t}=\{x\in \Om\ |\ {\rm dist}(x,\partial \Om)\ge t\}$. 
\item The proof of \eqref{width} follows directly from the definition of the minimal width (Definition \ref{width:def}) and \eqref{diff_parall}.
\item We prove now \eqref{circumradius}. 
 As observed in Remark \ref{rem:in}, and, in particular, by formula \eqref{circpar}, for every $\Om\in \mathcal{K}^2$, we have that $R(\Om+tB_1)=R(\Om)+t$. Thus, we have $$R(\Omega_{-t})= R(\Omega_{-t}+tB_1)-t\leq R(\Omega)-t.$$ 
  The last inequality follows from the inclusion $\Omega_{-t}+t B_1 \subset \Omega$ and the monotonicity of the circumradius with respect to inclusions. 

  \item Formula \eqref{perimeter} can be obtained as a consequence of the classical Steiner formula  
$$P(K+tB_1)=P(K)+2\pi t,$$
see for example \cite[Section 4.1]{schneider}, and the fact that the perimeter is monotone with respect to the inclusion for convex sets. Indeed, since $\Omega_{-t}+t B_1 \subset \Omega$, we have $P(\Omega_{-t}+t B_1 ) \leq P(\Omega)$, which is equivalent  by the Steiner formula to $P(\Omega_{-t})+2\pi t\leq P(\Omega)$. 
\end{itemize}

\end{proof}

\newpage
The following Lemma will play a key role in the proof of Theorem \ref{th2}. 
\begin{lemma}\label{lem:main}
Let $\Om\in \K^2$. We assume that there exists a continuous function $g^\Om:[0,r(\Om)]\rightarrow \R$ such that 
\begin{equation}\label{kawohl}
  \forall t\in [0,r(\Om)],\ \ \ \ |\Om_{-t}|\leq g^\Om(t),\ \ \ \text{(resp. $|\Om_{-t}|\ge g^\Om(t)$)}, 
\end{equation}
and that
\begin{equation}\label{set}
   G_{\Omega}:= \Set{t\in [0,r(\Om)] : g^\Om(t)=\pi t^2}\neq \emptyset.
\end{equation}
 We have 
\begin{equation}\label{kawohl2}
    h(\Om)\ge \frac{1}{t_{g^\Om}}\ \ \ \ \ \text{(resp. $h(\Om)\leq \frac{1}{t_{g^\Om}}$)},
\end{equation}
where $t_{g^\Om}$ is the smallest (resp. largest) solution to the equation $g^\Om(t) = \pi t^2$ on $[0,r(\Om)]$. 
\end{lemma}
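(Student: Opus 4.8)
The plan is to deduce the statement from the characterization of Cheeger sets of planar convex bodies due to Kawohl and Lachand-Robert \cite{kawohl}: for every $\Om\in\K^2$ there is a unique $\tau=\tau(\Om)\in[0,r(\Om)]$ such that $\abs{\Om_{-\tau}}=\pi\tau^2$, and $h(\Om)=1/\tau$; equivalently, $1/h(\Om)$ is the inradius of the Cheeger set $C_\Om=\Om_{-\tau}+\tau B_1$. Granting this, the proof becomes a one-variable comparison.

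First I would record the elementary fact that drives the argument: the auxiliary function $\psi(t):=\abs{\Om_{-t}}-\pi t^2$ is continuous and \emph{strictly decreasing} on $[0,r(\Om)]$. Continuity of $t\mapsto\abs{\Om_{-t}}$ is classical (the inner parallel bodies depend continuously on $t$ in the Hausdorff distance, hence so do their areas), and strict monotonicity is immediate from Definition \ref{def:inner_parallel}: for $0\le s<t\le r(\Om)$ one has $\Om_{-t}\subseteq\Om_{-s}$, so $\abs{\Om_{-t}}\le\abs{\Om_{-s}}$, while $\pi t^2>\pi s^2$; subtracting gives $\psi(t)<\psi(s)$. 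Since $\psi(0)=\abs{\Om}>0$ and, by \eqref{inr}, $r(\Om_{-r(\Om)})=0$, so that $\Om_{-r(\Om)}$ has empty interior and $\psi(r(\Om))=-\pi r(\Om)^2<0$, the intermediate value theorem together with strict monotonicity shows that $\psi$ has exactly one zero, which must be $\tau(\Om)$, and that $\tau(\Om)\in(0,r(\Om))$; in particular $h(\Om)=1/\tau(\Om)$ is finite and positive.

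Then I would carry out the comparison. Assume the first alternative, $\abs{\Om_{-t}}\le g^\Om(t)$ for all $t\in[0,r(\Om)]$, and set $t_{g^\Om}:=\min G_{\Omega}$, which exists because $G_{\Omega}$ is a nonempty closed subset of the compact interval $[0,r(\Om)]$ (as $g^\Om$ is continuous). Evaluating $\psi$ at $t_{g^\Om}$,
\[
\psi\bigl(t_{g^\Om}\bigr)=\abs{\Om_{-t_{g^\Om}}}-\pi t_{g^\Om}^2\ \le\ g^\Om\bigl(t_{g^\Om}\bigr)-\pi t_{g^\Om}^2\ =\ 0\ =\ \psi\bigl(\tau(\Om)\bigr);
\]
since $\psi$ is strictly decreasing this forces $t_{g^\Om}\ge\tau(\Om)$, and taking reciprocals (legitimate since $\tau(\Om)>0$) gives $h(\Om)=1/\tau(\Om)\ge 1/t_{g^\Om}$. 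In the reversed alternative $\abs{\Om_{-t}}\ge g^\Om(t)$ one sets $t_{g^\Om}:=\max G_{\Omega}$ and the identical computation yields $\psi(t_{g^\Om})\ge 0=\psi(\tau(\Om))$, hence $t_{g^\Om}\le\tau(\Om)$ and $h(\Om)=1/\tau(\Om)\le 1/t_{g^\Om}$ (the inequality being vacuous when $t_{g^\Om}=0$).

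I do not expect a genuine obstacle here: the single non-elementary input is the Kawohl--Lachand-Robert description of $h(\Om)$ through the inner parallel sets, after which the whole content is the observation that the sign of the continuous, strictly decreasing function $t\mapsto\abs{\Om_{-t}}-\pi t^2$ at the extremal point of $G_{\Omega}$ pins down the location of that point relative to the Cheeger radius $1/h(\Om)$. The only places demanding a little care are checking that $\Om_{-r(\Om)}$ is lower dimensional, so that $\tau(\Om)$ lies strictly inside $[0,r(\Om)]$ and $h(\Om)$ is finite and positive, and pairing ``smallest'' with the hypothesis ``$\le$'' (and ``largest'' with ``$\ge$'') so that the reciprocal inequality emerges in the asserted direction.
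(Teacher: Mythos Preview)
Your proof is correct and follows the same approach as the paper: both invoke the Kawohl--Lachand-Robert characterization $h(\Om)=1/t_\Om$ with $t_\Om$ the unique solution of $|\Om_{-t}|=\pi t^2$, and then compare $t_\Om$ with the extremal element of $G_\Omega$. The paper's argument is a one-line ``it is then clear'' accompanied by a picture (Figure~\ref{ilias}); you have simply made explicit the underlying reason, namely the strict monotonicity of $t\mapsto |\Om_{-t}|-\pi t^2$, which is exactly what the figure depicts.
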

\begin{proof}
From  \cite[Theorem 1]{kawohl}, we know that there exists a unique $t=t_\Om>0$ such that $\abs{\Om_{-t}}=\pi t^2$ and $\displaystyle{h(\Om)=1/t_\Om}$. It is then clear that, if there exists a function $g(t)$ such that \eqref{kawohl} and \eqref{set} hold, then, the smallest (resp. largest) element $t_{g^\Om}\in G_\Om$ must satisfy \eqref{kawohl2} (see Figure \ref{ilias}).
\end{proof}

\begin{figure}[h]
    \centering
    \includegraphics[scale=.6]{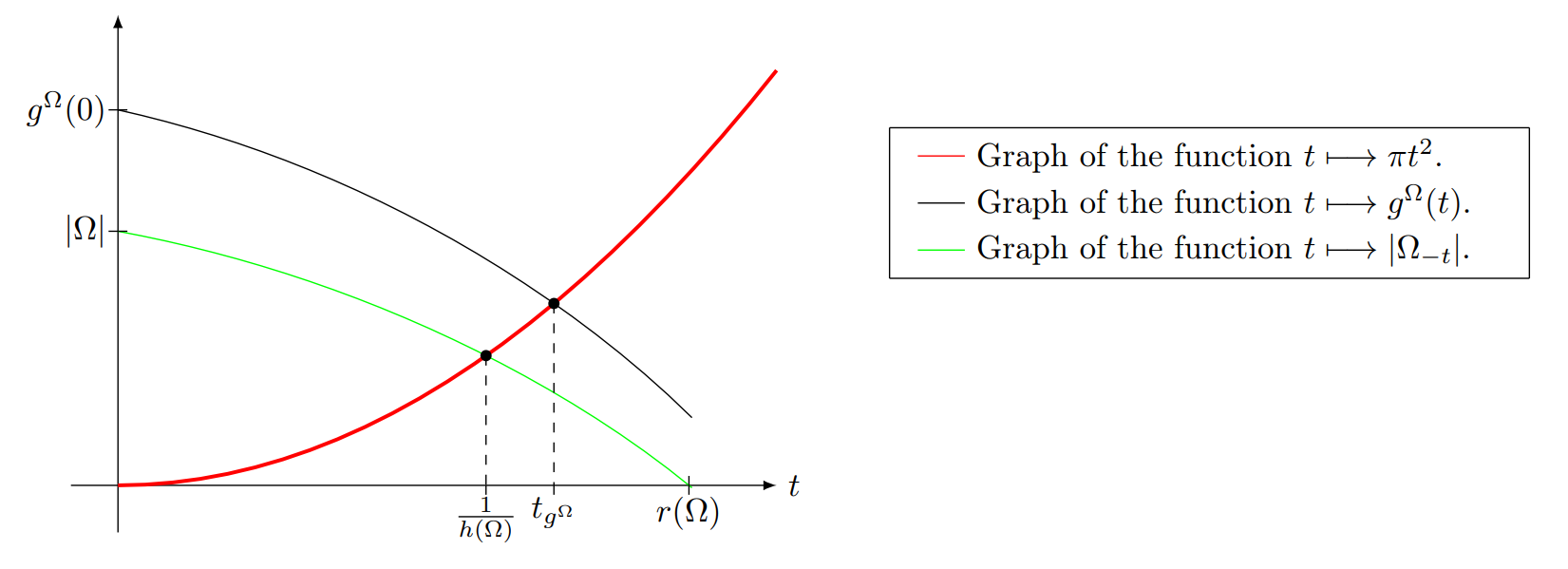}
    \caption{The idea of the proof of  Lemma \ref{lem:main}.}
    \label{ilias}
\end{figure}

\subsection{Extremal sets and their properties} \label{ext}
In this Section, we describe special planar shapes that appear in the statement of the main results.  

Firstly, let us recall the definition of the form body of a convex set $\Om$, following \cite{schneider}: a point $x\in\partial\Omega$ is called \emph{regular} if the supporting hyperplane at $x$ is uniquely defined. The set of all regular points of $\partial\Om$ is denoted by ${\rm reg}(\Om)$. 
We also let $U(\Om)$ denote the set of all outward pointing unit normals to $\partial\Om$ at points of ${\rm reg}(\Om)$.

\begin{definition}\label{formbody}
The form body $\Omega^\star$ of a set $\Om\in\mathcal{K}^2$ is
defined as
$$\Om^\star =\bigcap_{u\in U(\Om)} \{x\in \R^2: \, (x,u)\le 1\}.$$
\end{definition}

Convex sets that are homothetic to their form bodies will appear in the following as extremal sets. In particular, a polygon whose incircle touches all its sides is homothetic to its form body.

\newpage
\begin{definition}\label{stadium}
A \emph{stadium} $\mathcal{R}$ is defined as the convex hull of the union of two balls in $\R^2$ with the same radius, see Figure \ref{fig:stad}.
\end{definition}

\begin{figure}[h]
    \centering
    \includegraphics[scale=.32]{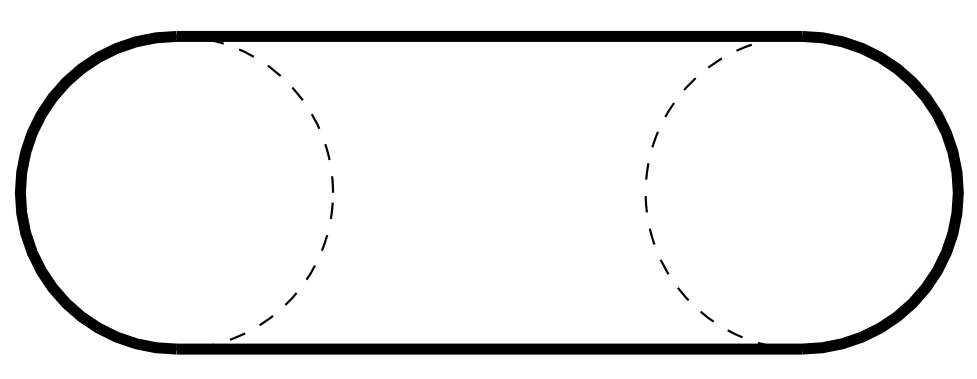}
    \caption{A stadium.}
    \label{fig:stad}
\end{figure}

\begin{definition}\label{def:slice}
The \emph{symmetrical spherical slice} $\mathcal{S}$ of diameter $d$ and width $\omega\leq d$ is the convex set obtained by the intersection of a ball of radius $d/2$ and a strip of width $\omega$ centered at the center of the ball, see Figure \ref{fiig:slice}.
\end{definition}

\begin{figure}[h]
    \centering
    \includegraphics[scale=.32]{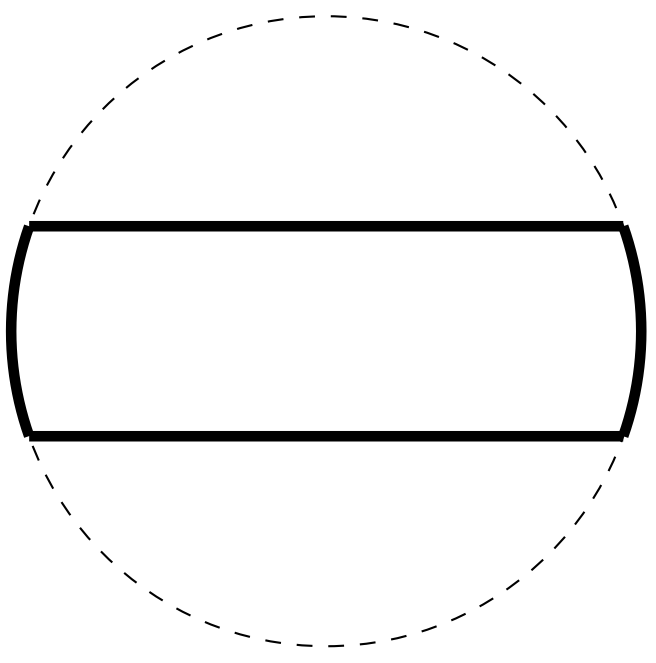}
    \caption{A symmetrical spherical slice.}
    \label{fiig:slice}
\end{figure}

\begin{definition}
A \emph{two-cup body} $\mathcal{C}$ is the convex hull of a ball in $\R^2$ with two points that are symmetric with respect to the center of the ball, see Figure \ref{fig:cup}. In particular, a two-cup body is homothetic to its form body.
\end{definition}

\begin{figure}[h]
    \centering
    \includegraphics[scale=.32]{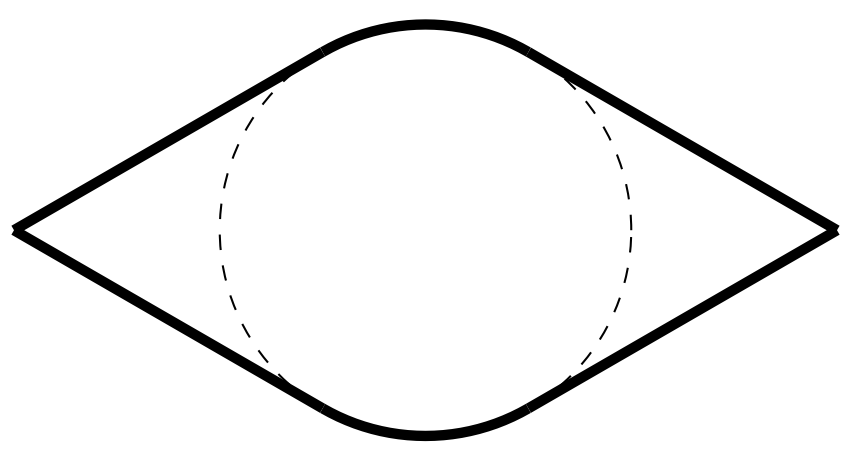}
    \caption{A two-cup body.}
    \label{fig:cup}
\end{figure}

\begin{definition}
A \emph{subequilateral triangle} $T_I$ is an isosceles triangle with two equal angles greater than $\pi/3$.
\end{definition}

The following class of sets (introduced in \cite{yamanouti}, see also \cite{santalo}) represents a way to pass in a continuous manner with respect to the Hausdorff distance from the equilateral triangle to the Reuleaux triangle. 

\begin{definition}\label{yamanouti}
A \emph{Yamanouti set} $Y$ is the convex hull of an equilateral triangle and three circular arcs of equal
radius, whose centers are each of the vertices of this triangle; the radius of these circular arcs
is less than the length of the edge of the triangle, see Figure \ref{yama}. 
\end{definition}

\begin{figure}[h]
    \centering
    \includegraphics[scale=.6]{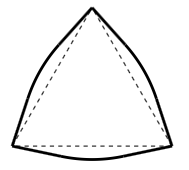}
    \caption{A Yamanouti set.}
    \label{yama}
\end{figure}

\newpage
In \cite{delyon2}, the authors define the smoothed regular nonagon as follows:
\begin{definition}\label{def:nonagon}
Let $r>0$ and $2 r<d<2\sqrt{3} r$. The \emph{smoothed regular nonagon} of inradius $r$ and diameter $d$, that we denote by $\mathcal{N}_{r,d}$, is the convex set enclosed in an equilateral triangle $T_E$ with barycenter in the origin and such that $r(T_E)=r$, obtained following the construction below. 
Let $\eta_i$ the normal angles to the sides of $T_E$ and let
$$\tau:= (3+\sqrt{d^2-3 r^2})/2, \,\, \, \, \text{and}\, \, \, \,  h:=\sqrt{d^2-\tau^2}.$$
We now define the points $A_i, B_i, M_i$, for $i=1,2,3$:
$$A_i:=r \begin{pmatrix}
\cos{\eta_i}+h \sin{\eta_i} \\
\sin{\eta_i}-h\cos{\eta_i}
\end{pmatrix}, \, \, B_i:=r \begin{pmatrix}
\cos{\eta_i}-h \sin{\eta_i} \\
\sin{\eta_i}+h\cos{\eta_i}
\end{pmatrix}, \, \, \, M_i:=r (1-\tau)\begin{pmatrix}
\cos{\eta_i} \\
\sin{\eta_i}
\end{pmatrix}.$$
We obtain $\mathcal{N}_{r,d}$ as follows (see Figure \ref{fig:non}):
\begin{itemize}
    \item the points $A_i, B_i$ and $M_i$, for $i=1,2,3$, belong to $\partial \mathcal{N}$;
    \item $\arc{B_1M_3}$ and $\arc{M_1A_3}$ are diametrically opposed arcs of the same circle of diameter $d$, the same for the pairs $\arc{B_2M_1}$ and $\arc{M_2A_1}$, $\arc{M_2 B_3}$ and $\arc{M_3A_2}$;
    \item the boundary contains the segments $\overline{A_iB_i}$, for $i=1,2,3$, and the contact point $I_i$ with the incircle is the middle of the corresponding segment.
\end{itemize}
\end{definition}


\begin{figure}[h]
    \centering
    \includegraphics[scale=.4]{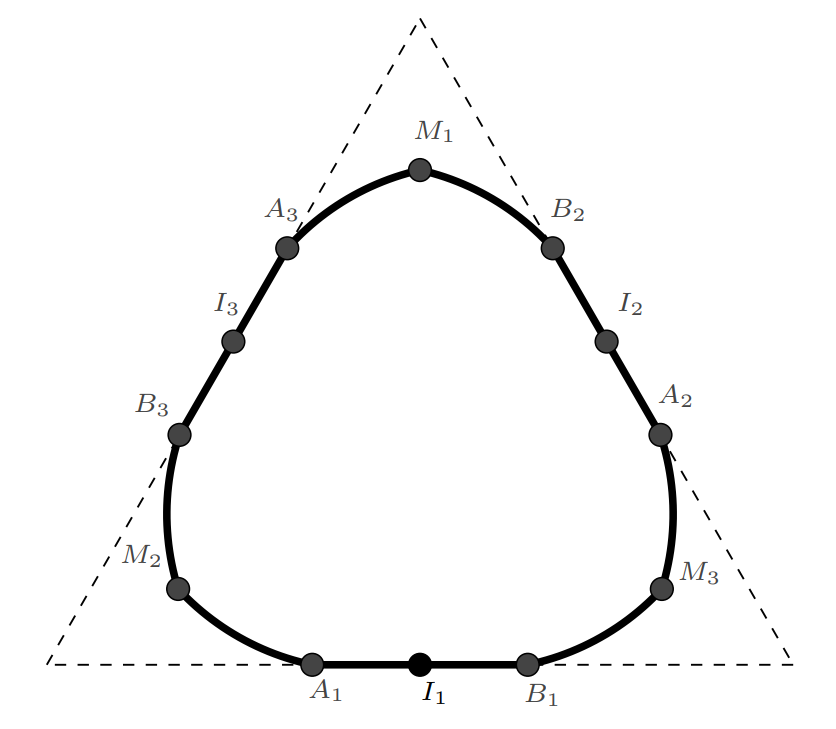}
    \caption{A smoothed regular nonagon.}
    \label{fig:non}
\end{figure}

Let us now recall some sharp inequalities that we will need in the sequel. Those classic estimates are obtained via the study of Blaschke--Santal\'o diagrams involving the following geometric quantities: the perimeter, the area, the inradius, the circumradius, the diameter and the minimal width. 

Firstly, let us consider the diagram $(A,d,r)$. We have the following two theorems.

\begin{theorem}[\cite{cifre_salinas} and \cite{delyon2}]
Let $\Om\in \mathcal{K}^2$. We have 
\begin{equation}
    \label{del_2cap}
    \abs{\Om}\ge r(\Om)\sqrt{d^2(\Om)-4r^2(\Om)}+r^2(\Om)\left(\pi- 2\arccos{\left(\frac{2r(\Om)}{d(\Om)}\right)}\right),
\end{equation}
where the equality holds if and only if $\Om$ is a two-cup body. 
\end{theorem}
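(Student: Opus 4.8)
The statement to establish is the sharp lower bound \eqref{del_2cap} on the area in terms of the diameter and the inradius, with equality characterized by two-cup bodies. Since the paper attributes this to \cite{cifre_salinas} and \cite{delyon2}, the plan is to reconstruct the geometric argument rather than merely cite it. The strategy is to fix $r = r(\Om)$ and $d = d(\Om)$ and to show that among all convex sets with these two values of inradius and diameter, the area is minimized exactly by the two-cup body $\mathcal{C}$ built from a disk of radius $r$ together with two antipodal apexes placed so that the diameter equals $d$.

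First I would set up the extremal configuration explicitly: place the incircle $B_r$ at the origin, and observe that the two-cup body of inradius $r$ and diameter $d$ consists of $B_r$ together with two symmetric ``caps'' obtained as the convex hull of $B_r$ with each of the two apex points at distance $d/2$ from the center. A direct computation of the area of each cap — the region between two tangent lines from an external point and the circular arc they cut off — gives the half-angle $\arccos(2r/d)$ subtended, a triangular contribution $r\sqrt{(d/2)^2 - r^2}$ per tangent-line pair, and a circular-sector correction; assembling the two caps plus the disk yields precisely the right-hand side of \eqref{del_2cap}. This confirms that the bound is attained and identifies the candidate minimizer.

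For the inequality itself, I would argue by a symmetrization/reduction step. Given any $\Om \in \mathcal{K}^2$ with inradius $r$ and diameter $d$, fix an incircle $B_r \subseteq \Om$ centered at a point $O$, and fix a diametrical chord $[x,y]$ of length $d$. The set $\Om$ contains the convex hull of $B_r \cup \{x,y\}$; the core point is that this convex hull is \emph{minimized in area}, over all admissible positions of the two diametral endpoints relative to the fixed disk, precisely when $x$ and $y$ are antipodal with respect to $O$ — i.e., when the chord passes through the center. This is where a Steiner-type argument enters: sliding the pair $\{x,y\}$ while keeping $\|x-y\| = d$ and keeping $B_r$ inscribed, one shows the area of $\mathrm{conv}(B_r \cup \{x,y\})$ is a convex function of the displacement of the chord's midpoint from $O$, hence minimized at the symmetric position, giving the two-cup body. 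Since $|\Om| \ge |\mathrm{conv}(B_r \cup \{x,y\})| \ge |\mathcal{C}|$, the inequality follows.

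The equality analysis then requires that both inequalities above be equalities: $\Om = \mathrm{conv}(B_r \cup \{x,y\})$ forces $\Om$ to be a cap-disk-cap body with no extra mass, and the optimality of the symmetric position forces the two apexes to be antipodal, so $\Om$ is exactly a two-cup body. The main obstacle I anticipate is the convexity (or monotonicity) claim for the area of $\mathrm{conv}(B_r \cup \{x,y\})$ as the chord midpoint moves away from $O$: one must handle the interaction between the two tangent-cone contributions carefully, check that the diameter constraint $d \ge 2r$ is exactly what makes the tangent-line construction well-defined, and verify that no degenerate configuration (e.g., one apex ``absorbed'' into the disk) gives a smaller area. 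A cleaner alternative, which I would fall back on if the direct convexity computation is messy, is to invoke the known characterization of the lower boundary of the $(A,d,r)$ Blaschke--Santal\'o diagram from \cite{cifre_salinas} directly, since the problem explicitly permits assuming earlier-cited results; but the self-contained route above is the one I would attempt first.
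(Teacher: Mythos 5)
This theorem is not proved in the paper: it is imported verbatim from \cite{cifre_salinas} and \cite{delyon2}, so there is no in-paper argument to measure your attempt against. On its own terms, your outline has the right overall shape (reduce to the convex hull of an incircle and a diametral pair, then optimize the position of that pair), and your computation of the area of the two-cup body is correct: the hull of $B_r$ with an apex at distance $\ell$ from the center adds $f(\ell):=r\sqrt{\ell^2-r^2}-r^2\arccos(r/\ell)$ to the disk, and two such caps with $\ell=d/2$ give exactly the right-hand side of \eqref{del_2cap}.

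The genuine gap is the central minimization step, which you assert but do not prove, and which as stated does not cover all configurations. The decomposition of $\mathrm{conv}(B_r\cup\{x,y\})$ into the disk plus two \emph{disjoint} tangent caps is what would turn the problem into minimizing $f(\ell_1)+f(\ell_2)$ subject to $\ell_1+\ell_2\ge\|x-y\|=d$; since $f'(\ell)=\sqrt{\ell^2-r^2}/\ell$ is positive and increasing and $f(r)=0$, $f$ is increasing and convex, the minimum is at $\ell_1=\ell_2=d/2$, and the case where one endpoint lies inside $B_r$ is absorbed for free. This is the clean way to realize your ``convexity of the displacement'' idea, and you should state it in these terms rather than as a Steiner-type slide of the chord. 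But the additive cap formula is only a valid lower bound when the two tangent cones meet in nothing more than $B_r$; if $x$ and $y$ sit at a small angle as seen from the incenter (and nothing in your setup forbids this a priori --- the diametral chord need not pass through the incircle), the caps overlap and the formula fails. One must then either show that such configurations are incompatible with $r(\Om)=r$ and $d(\Om)=d$ (note that $\mathrm{conv}(B_r\cup\{x,y\})\subseteq\Om$ forces this hull to have inradius exactly $r$, which constrains the angle), or give a separate area estimate for the overlapping case, e.g. via the triangle $Oxy$. The equality characterization is fine once the strict version of the minimization is in place, since $|\Om|=|\mathrm{conv}(B_r\cup\{x,y\})|$ with one convex body containing the other forces equality of the sets. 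So the proof is incomplete precisely at the point you yourself flag as the main obstacle; absent that, the honest route is the one the paper takes, namely citing \cite{cifre_salinas} and \cite{delyon2}.
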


\begin{theorem}[\cite{delyon2}, Theorem 2] \label{delthm}
Let $\Om\in \mathcal{K}^2$. We have
\begin{equation}\label{del:non}
     \abs{\Om}\le \psi\left(d\left(\Om\right),r(\Om)\right), 
\end{equation}
where
\begin{equation}\label{func_del}
   \psi(d,r):=\begin{cases}
    \displaystyle{\frac{3\sqrt{3}r}{2}(\sqrt{d^2-3r^2}-r)+\frac{3d^2}{2}\left(\frac{\pi}{3}-\arccos{\left(\frac{\sqrt{3}r}{d}\right)}\right)}, & \text{if} \, \, \,  d\le r D^*\vspace{1mm} \\
    \displaystyle{r\sqrt{d^2-4r^2}+\frac{d^2}{2}\arcsin{\left(\frac{2r}{d}\right)}}, & \text{if} \, \, \, d\ge r D^*
\end{cases} 
\end{equation}
and $D^*$ is the unique number in $[2,2\sqrt{3}]$ for which the two expressions of the function $\psi(d,1)$ are equal, i.e., 
\begin{equation*}
    \displaystyle{\frac{3\sqrt{3}}{2}(\sqrt{(D^{\ast})^2-3}-1)+\frac{3(D^\ast)^2}{2}\left(\frac{\pi}{3}-\arccos{\left(\frac{\sqrt{3}}{D^\ast}\right)}\right)}=\displaystyle{\sqrt{(D^{\ast})^2-4}+\frac{(D^{\ast})^2}{2}\arcsin{\left(\frac{2}{D^\ast}\right)}}.
\end{equation*}
Moreover, if $d(\Om)\le r(\Om)D^*$, we have the equality in \eqref{del:non} if and only if $\Om$ is a regular smoothed nonagon, while, if $d(\Om)>r(\Om)D^*$, we have the equality if and only if $\Om$ is a symmetrical spherical slice.
\end{theorem}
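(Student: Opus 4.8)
We sketch how the bound can be established (the result is due to \cite{delyon2}). The inequality \eqref{del:non} is the sharp upper estimate in the Blaschke--Santal\'o diagram of the triplet $(\abs{\cdot},d,r)$, so the natural route is to solve the shape optimization problem $\max\{\abs{\Om}\,:\,\Om\in\K^2_{d,r}\}$ and to identify its maximizers. By the homogeneities $\abs{t\Om}=t^2\abs{\Om}$, $d(t\Om)=t\,d(\Om)$ and $r(t\Om)=t\,r(\Om)$ one may normalize $r=1$, so that $d\ge 2$. Existence of a maximizer $\Om^\star$ follows from Blaschke's selection theorem (\cite[Theorem 1.8.7]{schneider}) together with the continuity of $\Om\mapsto(\abs{\Om},d(\Om),r(\Om))$ for the Hausdorff distance: after translating so that the incircle of $\Om^\star$ equals $B_1$, the admissible class is closed and, since $d$ is fixed, uniformly bounded. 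We fix such a normalization.

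The core of the argument is the structural analysis of $\Om^\star$. A first-order optimality argument shows that $\partial\Om^\star$ is a finite union of pieces of only three kinds: arcs of the incircle $\partial B_1$; line segments tangent to $\partial B_1$; and circular arcs along which the diameter constraint is active (so each of their points lies at distance exactly $d$ from some point of $\Om^\star$). Indeed, at a boundary point lying on none of these, both constraints $r(\Om^\star)\ge 1$ and $d(\Om^\star)\le d$ are locally inactive, so one can bulge $\Om^\star$ slightly outward there while preserving convexity and both constraints, strictly increasing the area, a contradiction. Additional perturbations impose tangency and angle conditions between consecutive pieces and, together with a compactness argument, bound the number of pieces; the inner-parallel estimates of Lemma \ref{lem:diameter_inner_set} are a convenient tool for controlling the geometry here.

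Once the admissible boundary types are known, the problem reduces to a finite union of finite-dimensional maximizations, one for each combinatorial arrangement of the boundary pieces. A reflection/symmetrization argument shows that within each arrangement the area is maximized by its most symmetric representative, and a short case analysis then isolates exactly two candidates: the $3$-fold symmetric smoothed regular nonagon $\mathcal{N}_{1,d}$ of Definition \ref{def:nonagon} --- three segments tangent to $\partial B_1$ together with six circular arcs of radius $d/2$ belonging to three circles, admissible precisely for $2<d<2\sqrt{3}$ and degenerating to $B_1$ as $d\downarrow 2$ and to the equilateral triangle as $d\uparrow 2\sqrt{3}$ --- and the $2$-fold symmetric symmetrical spherical slice of Definition \ref{def:slice}, that is $B_{d/2}$ intersected with the centred strip of width $2$, whose boundary consists of two arcs of $\partial B_{d/2}$ and two segments tangent to $\partial B_1$. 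Computing the $(d,r)$-data and the areas of these two sets produces exactly the two branches of the function $\psi(\cdot,1)$ in \eqref{func_del}. Since the two branches agree at $d=2$, comparing them on $(2,\infty)$ shows that the nonagon branch is the larger one on $(2,D^\ast)$ and the slice branch on $(D^\ast,\infty)$, where $D^\ast$ is the unique crossover value, i.e. the solution of the transcendental equation in the statement; the equality cases follow from the strictness of all the perturbations used above.

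The delicate point is the structural step: the diameter is a nonsmooth shape functional --- the set of diametral pairs of $\Om^\star$ need not reduce to a single pair --- so making the ``bulge outward'' perturbation and the matching conditions fully rigorous, and in particular bounding the number of boundary arcs, requires care. Ruling out every asymmetric critical configuration in the finite-dimensional reduction is the other technically demanding point; by contrast, the existence statement and the final one-variable comparison determining $D^\ast$ are routine.
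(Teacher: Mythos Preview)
This theorem is not proved in the present paper: it is stated in Section~\ref{sec2} as a known result quoted from \cite{delyon2} (hence the attribution in the theorem header), and is used as a black box in the proof of Proposition~\ref{prop_hdr}. There is therefore no proof in the paper to compare your proposal against.

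That said, your sketch is a faithful outline of the strategy carried out in \cite{delyon2}: normalize $r=1$, obtain existence by Blaschke selection, argue via local perturbations that the boundary of a maximizer consists only of incircle-tangent segments and diameter-active circular arcs, reduce to finitely many combinatorial configurations, use symmetry to single out the smoothed nonagon and the symmetrical spherical slice, and finally compare the two explicit area formulas to determine the crossover $D^\ast$. Your caveats about the nonsmoothness of the diameter functional and the elimination of asymmetric critical configurations correctly identify where the real work in \cite{delyon2} lies; these steps are genuinely delicate and your write-up, as it stands, is a roadmap rather than a proof. One minor inaccuracy: the six diameter-active arcs of $\mathcal{N}_{1,d}$ have radius $d$, not $d/2$ (they are arcs of circles of diameter $d$ centered at boundary points of the nonagon, cf.\ Definition~\ref{def:nonagon}). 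If you intend to include a self-contained proof, you would need to supply the full structural analysis and case elimination from \cite{delyon2}; otherwise, since the present paper only cites the result, no proof is required here.
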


As far as the diagram $(A, \omega, R)$ is concerned, we recall the following theorems: 
\begin{theorem}[\cite{cifre_salinas}, Theorem 3] \label{cifrsalth}
Let $\Om\in\mathcal{K}^2$. Then, it holds
\begin{equation}
\label{henk}
\abs{\Om}\leq \chi(\omega(\Om), R(\Om)),
\end{equation}
where
\begin{equation}\label{henkchi}
\chi(\omega(\Om), R(\Om)):=\frac{\omega(\Om)}{2} \sqrt{4 R(\Om)^2-\omega(\Om)^2} + 2R(\Om)^2 \arcsin{\frac{\omega(\Om)}{2R(\Om)}},
\end{equation}
and the equality in \eqref{henk} holds if and only if $\Om$ is a symmetrical spherical slice. 
\end{theorem}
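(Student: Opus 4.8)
\textbf{Proof proposal for Theorem \ref{cifrsalth} (the inequality $|\Om|\le \chi(\omega(\Om),R(\Om))$).}

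\medskip

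The plan is to reduce the problem to an extremal configuration by a sequence of shape perturbations that can only increase the area while keeping the minimal width and the circumradius fixed, and then to compute the area of that extremal set explicitly. First I would fix $\omega=\omega(\Om)$ and $R=R(\Om)$ and normalise so that the minimal width is attained in the vertical direction; by definition of $\omega(\Om)$, the set $\Om$ is then contained in a horizontal strip $\{|y|\le \omega/2\}$ (after a translation), and it touches both bounding lines of the strip. Since $R(\Om)=R$, the set is also contained in some disk $B_R(x_0)$ of radius $R$; after translating we may take this circumdisk centered at the origin, so $\Om\subseteq B_R\cap\{|y|\le \omega/2\}$, and the right-hand side $\chi(\omega,R)$ is precisely the area of this lens-shaped region $\mathcal{S}=B_R\cap\{|y|\le\omega/2\}$, which is a symmetrical spherical slice (cf. Definition \ref{def:slice}, with the identification $R=d/2$). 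This already gives the inequality $|\Om|\le|\mathcal{S}|=\chi(\omega,R)$ as a one-line consequence of the inclusion, provided one checks that the two normalisations (the strip realising the width, the disk realising the circumradius) can be chosen with the disk centered on the strip's axis of symmetry — which they can be, since the minimal-width strip can always be translated to be centered at any prescribed point on its normal line, in particular at the projection of $x_0$.

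\medskip

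Thus the core of the argument is really the explicit computation of $|\mathcal{S}|=|B_R\cap\{|y|\le\omega/2\}|$ and the verification that it equals $\chi(\omega,R)$ as written in \eqref{henkchi}. Writing $\alpha=\arcsin\!\big(\tfrac{\omega}{2R}\big)$ for the half-angle subtended at the center by the chord $\{y=\omega/2\}\cap B_R$, the slice decomposes into the central rectangle-like region plus two circular segments; integrating, $|\mathcal{S}| = 2\int_{0}^{\omega/2}2\sqrt{R^2-y^2}\,dy = \omega\sqrt{R^2-\omega^2/4} + 2R^2\arcsin\!\big(\tfrac{\omega}{2R}\big) = \tfrac{\omega}{2}\sqrt{4R^2-\omega^2}+2R^2\arcsin\!\big(\tfrac{\omega}{2R}\big)$, which is exactly \eqref{henkchi}. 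This is a routine calculation.

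\medskip

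The delicate point — and the only place where genuine care is needed — is the equality case. The inclusion $\Om\subseteq\mathcal{S}$ is saturated in area if and only if $\Om=\mathcal{S}$ up to a null set, but to conclude that equality in \eqref{henk} \emph{forces} $\Om$ to be a symmetrical spherical slice one must rule out the possibility that a non-slice set nevertheless has $|\Om|=\chi(\omega,R)$: this cannot happen because $\chi(\omega,R)$ was identified as $|\mathcal{S}|$ for the \emph{particular} circumscribed slice $B_R\cap\{|y|\le\omega/2\}$, and by the strict monotonicity of Lebesgue measure under proper convex inclusions, $|\Om|=|\mathcal{S}|$ with $\Om\subseteq\mathcal{S}$ gives $\Om=\mathcal{S}$. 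One should also double-check the admissibility constraint $2R\ge\omega$ (so that $\arcsin(\omega/2R)$ is defined), which is exactly the hypothesis built into the class $\mathcal{K}^2_{\omega,R}$, and note that when $2R=\omega$ the slice degenerates to the full disk $B_R$, consistently with $\chi(2R,R)=\pi R^2$. I expect no real obstacle here beyond bookkeeping; the whole theorem is essentially the observation that the extremal set for fixed $(\omega,R)$ is the largest set fitting simultaneously in the width strip and the circumdisk, namely their intersection.
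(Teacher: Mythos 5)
The paper does not actually prove this statement --- it is imported verbatim from \cite{cifre_salinas} --- so your argument has to stand on its own. Your overall strategy (trap $\Om$ inside the intersection of its circumdisk with its minimal-width strip, then compute the area of that intersection) is the right one, and the integral giving $\chi(\omega,R)$ is correct. The genuine gap is the normalisation step: you assert that ``the minimal-width strip can always be translated to be centered at any prescribed point on its normal line, in particular at the projection of $x_0$''. This is false. The width-$\omega$ strip containing $\Om$ in the minimal direction is the region between the two supporting lines of $\Om$ orthogonal to that direction; it is pinned by $\Om$ and cannot be translated while still containing $\Om$ and having width $\omega$. Moreover the circumcentre genuinely need not lie on the midline of that strip: for an equilateral triangle of side $a$ the midline of the minimal-width strip sits at height $\tfrac{\sqrt{3}}{4}a$ above a side, while the circumcentre sits at height $\tfrac{\sqrt{3}}{6}a$. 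So the inclusion $\Om\subseteq B_R\cap\{|y|\le \omega/2\}$ with the disk centred on the strip's axis is not available in general, and both your inequality and your equality discussion, as written, rest on it.

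The repair is short but it is a missing idea, not bookkeeping. One always has $\Om\subseteq B_R(x_0)\cap S$, where $S=\{c-\omega/2\le y\le c+\omega/2\}$ is the actual minimal-width strip and $x_0=(x_0^1,y_0)$ the actual circumcentre, and
\[
|B_R(x_0)\cap S|=\int_{c-\omega/2}^{c+\omega/2}\ell(y)\,dy,\qquad \ell(y)=2\sqrt{\max\{R^2-(y-y_0)^2,0\}}.
\]
Since $\ell$ is even about $y_0$ and strictly decreasing in $|y-y_0|$ on its support, the integral over an interval of fixed length $\omega\le 2R$ is maximised exactly when the interval is centred at $y_0$, with value $\chi(\omega,R)$; this gives $|\Om|\le|B_R(x_0)\cap S|\le\chi(\omega,R)$ without any illegitimate recentering. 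In the equality case the same unimodality forces $S$ to be centred at $x_0$ (uniquely so when $\omega<2R$), and then your measure-plus-closedness argument gives $\Om=B_R(x_0)\cap S$, i.e.\ a symmetrical spherical slice. You should also record the converse check that this slice really has circumradius $R$ and minimal width $\omega$ (its width function has a strict minimum in the vertical direction), which you currently take for granted.
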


\begin{theorem}[\cite{cifre_salinas}, Theorem 6]
Let $\Om\in\mathcal{K}^2$. Then, if $\omega(\Om)\leq \frac{3}{2}R(\Om)$, it holds
\begin{equation}\label{ARw_low}
   16\abs{\Om}^6\geq R^2(\Om) \omega^2(\Om)\left(16 \abs{\Om}^4-R^2(\Om)\omega^6(\Om)\right)
\end{equation}
and the equality holds if and only if $\Om$ is a subequilateral triangle.
\end{theorem}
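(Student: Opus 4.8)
The plan is to rephrase \eqref{ARw_low} as a sharp lower bound for the area in terms of $\omega$ and $R$, and then to identify the extremal set by solving a constrained minimization of the area. Writing $A=|\Om|$, $R=R(\Om)$, $\omega=\omega(\Om)$ and setting $x=A^2$, inequality \eqref{ARw_low} is exactly $f(x):=16x^3-16R^2\omega^2x^2+R^4\omega^8\ge 0$. Since $f$ is homogeneous we may normalize $R=1$; an elementary study of $f$ --- its only critical point on $(0,+\infty)$ is a minimum at $x_0=\tfrac23R^2\omega^2$, where $f(x_0)=R^4\omega^6\big(\omega^2-\tfrac{64}{27}R^2\big)<0$ because $\omega\le\tfrac32R<\tfrac{8}{3\sqrt3}R$ --- shows that $f$ has exactly two positive zeros, $x_1$ and $x_2$, with $x_1<x_0<x_2$, and that $f\ge0$ precisely on $[0,x_1]\cup[x_2,+\infty)$. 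On the other hand, a direct computation for a subequilateral triangle $T_I$, using that its minimal width equals its shortest altitude --- which, precisely when the apex angle is $\le\pi/3$, is the altitude to one of the two equal sides --- yields $16|T_I|^6-16R^2(T_I)\omega^2(T_I)|T_I|^4+R^4(T_I)\omega^8(T_I)=0$ together with $|T_I|^2>x_0$, and hence $|T_I|^2=x_2$. It therefore suffices to prove that, whenever $\omega(\Om)\le\tfrac32R(\Om)$, one has $|\Om|\ge|T_I(\Om)|$, where $T_I(\Om)$ denotes the unique subequilateral triangle with the same circumradius and minimal width as $\Om$ (such a triangle exists for every admissible pair $(R,\omega)$, the equilateral one corresponding to $\omega=\tfrac32R$); indeed this yields $|\Om|^2\ge x_2$ and hence $f(|\Om|^2)\ge0$.

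The inequality $|\Om|\ge|T_I(\Om)|$ amounts to the assertion that subequilateral triangles minimize the area in the class $\mathcal{K}^2_{\omega,R}$ of convex bodies with prescribed minimal width $\omega$ and circumradius $R$, in the range $\omega\le\tfrac32R$. Existence of a minimizer $\Om^*$ follows, by the direct method as in the proof of Theorem \ref{th:existence}, from the Blaschke selection theorem, the continuity of $R(\cdot)$, $\omega(\cdot)$ and $|\cdot|$ with respect to the Hausdorff distance, and the bound $|\Om|\ge\omega^2/\sqrt3$ (the equilateral triangle of width $\omega$), which prevents degeneration. The heart of the matter is to show that $\Om^*$ is a triangle: by a cutting/perturbation argument, at any boundary point of $\Om^*$ that lies neither on the smallest enclosing disk nor on a pair of parallel support lines at distance $\omega$, one may push the boundary strictly inward, decreasing the area while keeping $R(\Om^*)$ and $\omega(\Om^*)$ admissible; iterating, $\partial\Om^*$ consists of arcs of the circumscribed circle and of chords joining consecutive active-constraint points, and in the regime $\omega\le\tfrac32R$ these arcs degenerate to isolated vertices so that, after counting the active width constraints, $\Om^*$ is a triangle.

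It remains to minimize the area among triangles with prescribed $R$ and $\omega$: parametrizing such a triangle by two angular parameters on its circumscribed circle of radius $R$, writing $|\Om|$ and $\omega(\Om)$ (the shortest altitude) explicitly, and minimizing $|\Om|$ under the constraint $\omega(\Om)=\omega$ by Lagrange multipliers, one finds that every critical configuration is isosceles, and on the isosceles family the constraint $\omega\le\tfrac32R$ becomes exactly the requirement that the apex angle be $\le\pi/3$, i.e.\ that the optimal triangle be subequilateral (the equilateral one occurring for $\omega=\tfrac32R$). An explicit computation of its area gives $|\Om^*|^2=x_2$, which proves \eqref{ARw_low}; and since equality in \eqref{ARw_low} forces $|\Om|^2=x_2$, uniqueness of the minimizer in the previous steps shows that equality holds only for subequilateral triangles. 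I expect the main obstacle to be the structural step of the second paragraph --- turning the cutting heuristic into a rigorous proof that the area minimizer is a genuine triangle, and in particular explaining cleanly why $\omega=\tfrac32R$ is the exact threshold beyond which circular caps survive --- together with the Lagrange-multiplier bookkeeping needed to discard non-isosceles and obtuse competitors.
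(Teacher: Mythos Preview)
The paper does not prove this statement: it is quoted verbatim from \cite{cifre_salinas} (Theorem~6 there) as one of several known inequalities from the convex-geometry literature, and is used later only as a black box in the proof of Proposition~\ref{prop_hRw}. So there is no ``paper's own proof'' to compare against.

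Your reduction is sound. Rewriting \eqref{ARw_low} as the cubic inequality $16x^{3}-16R^{2}\omega^{2}x^{2}+R^{4}\omega^{8}\ge 0$ in $x=|\Om|^{2}$, checking that subequilateral triangles sit exactly at the larger positive root, and then reducing everything to the statement ``subequilateral triangles minimize the area in $\mathcal{K}^2_{\omega,R}$ for $\omega\le\tfrac32R$'' is a clean and correct strategy; it is in fact essentially the line of attack in \cite{cifre_salinas} as well. Your computation that $|T_I|$ is a root is right (with $\beta$ the base angle one gets $A=R\omega\sin\beta$ and $\omega=4R\sin^{2}\beta\cos\beta$, from which the cubic vanishes identically).

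The genuine gap is exactly the one you flag: the ``cutting/perturbation'' paragraph is a heuristic, not a proof. Pushing the boundary inward at a non-active point can destroy convexity or violate the width constraint in directions you have not yet controlled, and it is not true in general that the minimizer's boundary decomposes only into circumcircle arcs and width-realizing chords; one needs a more careful argument to force polygonality and then triangularity. In \cite{cifre_salinas} this step is handled by first reducing to triangles via a separate structural lemma (essentially: for $\omega\le\tfrac32R$ one can inscribe in $\Om$ a triangle with the same $\omega$ and $R$ and no larger area), after which the isosceles optimization you describe finishes the job. If you want to make your version self-contained, that reduction-to-triangles lemma is the piece you would have to supply rigorously.
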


We recall the following inequality from the diagram $(R,r,\omega)$.

\begin{theorem}[\cite{cifre_gomis}, Theorem 2]\label{cifre_gomis_1_thm}
Let $\Om\in\mathcal{K}^2$. Then, it holds
\begin{equation}\label{cifre_gomis_1_eq}
    \left(4 r(\Om)-\omega(\Om)   \right)\left( \omega(\Om)-2 r(\Om)\right)\leq \frac{2 r^3(\Om)}{R(\Om)}
\end{equation}
and the equality holds if and only if $\Om$ is an isosceles triangle.
\end{theorem}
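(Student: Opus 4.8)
The plan is to dispose of the trivial range of $\omega$ first and then reduce the whole estimate to the case of triangles. Since the incircle of $\Om$ has width exactly $2r(\Om)$ in every direction, one always has $\omega(\Om)\ge 2r(\Om)$, so the factor $\omega(\Om)-2r(\Om)$ is nonnegative; when $\omega(\Om)=2r(\Om)$ the left-hand side of \eqref{cifre_gomis_1_eq} vanishes and there is nothing to prove. So I would assume $2r(\Om)<\omega(\Om)$, recall the classical bound $\omega(\Om)\le 3r(\Om)$ (equality only for the equilateral triangle), and, using the $1$-homogeneity of $r$, $\omega$, $R$, normalise $r(\Om)=1$. The inequality then reads: every $\Om\in\K^2$ with $r(\Om)=1$ and $\omega(\Om)=\omega\in(2,3]$ satisfies
\[
R(\Om)\le R_0(\omega):=\frac{2}{(4-\omega)(\omega-2)},
\]
with equality attained; so the task becomes to evaluate $\max\{R(\Om):\ \Om\in\K^2,\ r(\Om)=1,\ \omega(\Om)=\omega\}$.

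The heart of the argument will be to prove this maximum is achieved by a triangle. First I would record the structural features of an extremal body $\Om$: (i) since $\omega>2r$, the incircle cannot meet $\partial\Om$ at only two antipodal points -- the two parallel tangent lines there would otherwise confine $\Om$ to a strip of width $2r$, forcing $\omega=2r$ -- hence the incircle touches $\partial\Om$ at three or more points whose outward normals positively span $\R^2$; (ii) in a direction $u$ realising the minimal width, $\Om$ touches the two supporting lines orthogonal to $u$ at points lying exactly opposite each other across the strip, and, $u$ being a genuine minimum of the width function, at least one of the two contact sets is a non-degenerate segment; (iii) the circumscribed circle meets $\partial\Om$ either at two antipodal points or at three points whose convex hull contains its centre. (The a priori boundedness of $R$ for fixed $\omega>2$ needed below is itself a soft fact: a convex body of inradius $1$ that is very long in some direction must, by convexity, be roughly $2$-wide in the orthogonal direction, forcing its minimal width down to $2$.) With (i)--(iii) I would run a compactness-plus-perturbation scheme: over convex bodies with $r\ge 1$, $\omega=\omega_0$ contained in a fixed large disk, $R$ attains a maximum by Blaschke selection and the Hausdorff continuity of $r$, $\omega$, $R$; at the maximiser both $r=1$ and $\omega=\omega_0$ are active, and any boundary arc not pinned by the incircle, by the two minimal-width lines, or by the circumcircle can be replaced by a straight chord pushed slightly outward -- compensated, if necessary, by an inward push on another free arc so as to restore $\omega=\omega_0$ while keeping $r\ge 1$ -- strictly increasing $R$. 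Hence the optimal $\Om$ is a polygon, and a further elementary analysis of the admissible active constraints forces it to be a triangle. I expect the delicate point to be exactly this deformation step: the three effects are coupled, since enlarging $\Om$ raises $R$ but also raises $\omega$ -- which tightens the very bound $R_0(\omega)$ one is chasing -- and may raise $r$, so no monotonicity is available for free.

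It then remains to solve the problem for triangles, which is a one-variable calculus exercise. The minimal width of a triangle is its shortest altitude, namely the altitude onto its longest side, so fixing $r=1$ and $\omega=\omega_0$ leaves essentially a one-parameter family of shapes, and a reflection argument in the bisector of the vertex opposite the longest side shows the extremal triangle is isosceles. Writing it with base vertices $(\pm 1,0)$ and apex $(0,H)$, $H\ge\sqrt3$, and putting $c:=\sqrt{1+H^2}$, one computes $r=H/(1+c)$, $\omega=2H/c$, $R=c^2/(2H)$, whence $4r-\omega=\frac{2H(c-1)}{c(1+c)}$ and $\omega-2r=\frac{2H}{c(1+c)}$, so that
\[
(4r-\omega)(\omega-2r)R=\frac{4H^2(c-1)}{c^2(1+c)^2}\cdot\frac{c^2}{2H}=\frac{2H(c-1)}{(1+c)^2}=\frac{2H^3}{(1+c)^3}=2r^3,
\]
the last-but-one equality because $c^2-1=H^2$. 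Thus equality holds in \eqref{cifre_gomis_1_eq} along the whole family of subequilateral isosceles triangles (and, in the limit $\omega=3r$, for the equilateral triangle), while the same computation for any other triangle -- for instance an isosceles triangle with $H<\sqrt3$, whose shortest altitude is the one onto the base -- produces a strict inequality. Combined with the reduction of the previous paragraph, this establishes \eqref{cifre_gomis_1_eq} together with the claimed equality cases. As a possible shortcut I would also try to bypass the extremal problem altogether by enclosing $\Om$ in a well-chosen circumscribed triangle $T$, applying the triangle case to $T$, and exploiting a monotonicity property of $r\mapsto 2r^3/[(4r-\omega)(\omega-2r)]$; but whether a clean choice of $T$ makes this work is not clear to me in advance.
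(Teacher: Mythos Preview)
The paper does not prove this theorem at all: it is quoted verbatim from \cite{cifre_gomis} in the preliminaries section (Theorem~\ref{cifre_gomis_1_thm}) and used only as a black box later on, in the proof of Proposition~\ref{prop_hRw}. So there is no ``paper's own proof'' to compare against; any assessment has to be on the internal merits of your argument.

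Your explicit computation for the isosceles family is clean and correct, and it does show that subequilateral isosceles triangles realise equality in \eqref{cifre_gomis_1_eq}. The normalisation and the observation that the problem is equivalent to bounding $R$ from above at fixed $r$ and $\omega$ are also fine.

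The genuine gap is the reduction to triangles. Your compactness-plus-perturbation scheme is only a sketch, and the delicate step you flag is real: enlarging $\Om$ on a ``free'' arc to raise $R$ will in general raise $\omega$ as well, and since $\omega\mapsto 2r^3/[(4r-\omega)(\omega-2r)]$ is \emph{decreasing} on $(2r,3r]$, you cannot simply relax the width constraint. Your proposed compensation---pushing inward elsewhere to restore $\omega$---is not obviously feasible without disturbing $r$ or the circumcircle contacts, and you do not give a mechanism guaranteeing such a compensating arc exists at an extremal body. The alternative you mention at the end (enclose $\Om$ in a circumscribed triangle $T$ sharing the incircle) runs into the same monotonicity obstruction: $r(T)=r(\Om)$ but $\omega(T)\ge\omega(\Om)$ and $R(T)\ge R(\Om)$, so the triangle inequality for $T$ does not directly imply the one for $\Om$. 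To close the argument you would need either a sharper structural analysis of the maximiser (showing directly that only three supporting lines can be active), or a different comparison that controls the interplay between $\omega$ and $R$ along the deformation; as written, this step is not established.
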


\begin{theorem}[\cite{santalo}, Section 10]\label{santalo_thm_1}
Let $\Om\in\mathcal{K}^2$. Then, it holds
\begin{equation}\label{santalo_eq_1}
    \omega(\Omega)\leq R(\Omega)+r(\Omega),
\end{equation}
where the equality is achieved by any set of constant width (i.e., the set $\Om$ is such that the function $\omega_\Om$, defined in Definition \ref{width:def}, is constant).

\end{theorem}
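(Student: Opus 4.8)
The plan is to control the minimal width by means of the inball and the circumball of $\Om$. Fix an inball $B_r(I):=I+B_r$ of $\Om$, where $r=r(\Om)$, so that $B_r(I)\subseteq\Om$, and let $B_R(O):=O+B_R$ be the circumball, $R=R(\Om)$, so that $\Om\subseteq B_R(O)$. Denote by $T:=\partial B_r(I)\cap\partial\Om$ the set of contact points of the inball. The one non-elementary ingredient I would invoke is the classical fact that the incenter lies in the convex hull of its contact set, $I\in\operatorname{conv}(T)$: if this failed, one could translate $B_r(I)$ a little in a direction pointing away from $\operatorname{conv}(T)$ and then inflate it slightly while remaining inside $\Om$, contradicting the maximality of $r$ (see e.g. \cite{schneider}).

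Granting this, by Carath\'eodory's theorem I would write $I=\sum_{j=1}^{m}\lambda_j p_j$ with $m\le 3$, $p_j\in T$, $\lambda_j>0$ and $\sum_j\lambda_j=1$. Setting $v_j:=(p_j-I)/r\in\mathbb{S}^1$, this identity becomes $\sum_j\lambda_j v_j=0$, hence $\sum_j\lambda_j\,(I-O)\cdot v_j=0$, so there is an index $j_0$ with $(I-O)\cdot v_{j_0}\le 0$. Now $p_{j_0}$ is a point where the ball $B_r(I)\subseteq\Om$ touches $\partial\Om$, so any supporting line of $\Om$ at $p_{j_0}$ is the tangent line to $\partial B_r(I)$ there, which is orthogonal to $v_{j_0}$; therefore $\Om\subseteq\{x:\ x\cdot v_{j_0}\le I\cdot v_{j_0}+r\}$ and, since $p_{j_0}\in\Om$, we get $p_\Om(v_{j_0})=I\cdot v_{j_0}+r$. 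From $\Om\subseteq B_R(O)$ one has $p_\Om(-v_{j_0})\le O\cdot(-v_{j_0})+R$, and thus
\begin{equation*}
\omega(\Om)\le\omega_\Om(v_{j_0})=p_\Om(v_{j_0})+p_\Om(-v_{j_0})\le(I\cdot v_{j_0}+r)+(O\cdot(-v_{j_0})+R)=R+r+(I-O)\cdot v_{j_0}\le R+r,
\end{equation*}
which is \eqref{santalo_eq_1}. For the equality statement, if $\Om$ has constant width $w$ then $\omega_\Om\equiv w$ on $\mathbb{S}^1$, so $\omega(\Om)=w$, and the classical identity $r(\Om)+R(\Om)=w$ for bodies of constant width (whose inball and circumball are concentric) yields equality in \eqref{santalo_eq_1}.

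The main obstacle, modest as it is, is the justification of $I\in\operatorname{conv}(T)$, which I would either cite or prove by the perturbation argument above. One can also bypass it by splitting into the two classical configurations of the incircle: either $T$ contains two points antipodal with respect to $I$, in which case the two parallel supporting lines there are at distance $2r$ and $\omega(\Om)\le 2r\le R+r$ since $R\ge r$; or $T$ contains three points whose convex hull contains $I$ in its interior, which is exactly the situation handled by the convex-combination computation above. Everything else is routine manipulation of support functions.
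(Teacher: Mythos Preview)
The paper does not give its own proof of this theorem; it simply records it as a classical inequality from \cite{santalo}. Your argument is correct and self-contained: the key ingredient $I\in\operatorname{conv}(T)$ (equivalently, the dichotomy on the contact configuration of the incircle) is the standard one, and is in fact invoked later in the paper, in the proof of Proposition \ref{prop_hwr}, with a reference to \cite{bonnesen}. The remainder of your proof---selecting a contact direction $v_{j_0}$ with $(I-O)\cdot v_{j_0}\le 0$ and bounding the width in that direction by $r+R$ via the support function---is a clean and correct way to conclude.
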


The following theorem deals with the $(A, r, R)$ diagram.

\begin{theorem}[\cite{cifre_salinas}, Theorems 1 and 2]\label{inr_circ_thm}
Let $\Om\in\mathcal{K}^2$. Then, it holds
\begin{equation}\label{secondhRr}
    \abs{\Om}\ge 2r(\Om)\left(\sqrt{R(\Om)^2-r(\Om)^2}+r\arcsin{\frac{r(\Om)}{R(\Om)}}\right), 
\end{equation}
and the equality in \eqref{secondhRr} holds if and only if $\Om$ is a two-cup body. 
Moreover, we have
\begin{equation}\label{cifreRr}
    \abs{\Om}\le \varphi(R(\Om), r(\Om)),
\end{equation}
where
\begin{equation}
    \label{cifreArR}
    \varphi(R(\Om), r(\Om)):= 2\left(r\sqrt{R(\Om)^2-r(\Om)^2}+R^2(\Om)\arcsin{\frac{r(\Om)}{R(\Om)}}\right),
\end{equation}
and the equality in \eqref{cifreArR} holds if and only if $\Om$ is a symmetrical spherical slice.
\end{theorem}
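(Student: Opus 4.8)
The plan is to prove the two bounds independently, after normalizing so that the incircle of $\Om$ is $B_r$ centered at the origin; then $B_r\subseteq\Om\subseteq B_R(z)$ for the circumcenter $z$, whence $\norma z\le R-r$. I would first record that the two candidate extremal sets attain the stated values. For the symmetrical spherical slice $\mathcal S=B_R(0)\cap\{|y|\le r\}$ one has $\abs{\mathcal S}=\int_{-r}^{r}2\sqrt{R^2-y^2}\,dy=2\bigl(r\sqrt{R^2-r^2}+R^2\arcsin(r/R)\bigr)=\varphi(R,r)$, and its inradius and circumradius are indeed $r$ and $R$. For the concentric two-cup body $\mathcal C=\mathrm{conv}\bigl(B_r(0)\cup\{(0,R),(0,-R)\}\bigr)$, adding to $\pi r^2$ the two congruent tangent caps, each of area $r\sqrt{R^2-r^2}-r^2\arccos(r/R)$, gives $2\bigl(r\sqrt{R^2-r^2}+r^2\arcsin(r/R)\bigr)$, the right-hand side of \eqref{secondhRr}. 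It then remains to prove the two inequalities and to pin down the equality cases.

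For the upper bound \eqref{cifreRr} I would use the incircle: it touches $\partial\Om$ at points $T_1,\dots,T_k$ whose convex hull contains the origin, so $\Om$ is contained in $B_R(z)\cap\bigcap_i H_i$, where $H_i$ is the halfplane bounded by the tangent line to $B_r(0)$ at $T_i$ that contains $B_r(0)$. An area-maximal configuration of this type has boundary consisting only of an arc of $\partial B_R(z)$ and segments on the lines $\partial H_i$. When $k=2$, the two contact points are forced to be antipodal, $\bigcap_i H_i$ is (after a rotation) the strip $\{|y|\le r\}$, and $\abs{\Om}\le\int_{-r}^{r}2\sqrt{R^2-(y-z_2)^2}\,dy$; this integral of the even, $\abs{t}$-nonincreasing function $\sqrt{R^2-t^2}$ over a window of fixed length $2r$ is maximized precisely when $z_2=0$, which gives $\varphi(R,r)$ and forces $\Om=\mathcal S$ at equality. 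The step I expect to be the real obstacle is the case $k\ge3$: one cannot simply discard $k-2$ of the halfplanes, since two non-parallel ones meet in an unbounded wedge, so one must instead argue by a deformation that keeps $r$ and $R$ fixed while increasing the area and lowering the number of incircle contact points until only an antipodal pair remains.

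For the lower bound \eqref{secondhRr} I would use the circumcircle: since $B_R(z)$ is the minimal enclosing ball of $\Om$, its center $z$ lies in the convex hull of $S:=\Om\cap\partial B_R(z)$, so $S$ contains either two antipodal points of $\partial B_R(z)$ or three points of it lying in no closed half-circle; in either case $B_R(z)$ is already the minimal enclosing ball of $\mathrm{conv}(B_r(0)\cup S)$. Hence $\Om\supseteq\mathrm{conv}(B_r(0)\cup S)=:\Om_0$ and $\abs{\Om}\ge\abs{\Om_0}$, and it suffices to minimize $\abs{\mathrm{conv}(B_r(0)\cup S)}$. For a two-point configuration $S=\{z+Ru,z-Ru\}$ the set is $\pi r^2$ plus two tangent caps; using that the cap area is increasing in the distance from the incenter to the cap apex and optimizing over $z$ and $u$, the minimum is attained at $z=0$, i.e.\ at the two-cup body, with value exactly $2\bigl(r\sqrt{R^2-r^2}+r^2\arcsin(r/R)\bigr)$. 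A genuine three-point configuration contains a two-point one, and as the inscribed triangle degenerates to a diameter its area decreases to this same value, so it never beats the two-cup body; tracing equalities back through the chain then shows that equality forces $\Om$ to be a concentric two-cup body. As with the upper bound, the delicate point is to make this last reduction — that no three-point configuration goes below the two-cup value — rigorous rather than merely heuristic.
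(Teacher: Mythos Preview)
The paper does not prove this theorem at all: it is quoted verbatim from \cite{cifre_salinas} (Theorems~1 and~2 there) and used as a black box. So there is no ``paper's own proof'' to compare your attempt against.

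Your outline is a reasonable sketch and the computations for the two extremal shapes are correct, but the two places you yourself flag as obstacles are genuine gaps, not merely technicalities. For the lower bound, the sentence ``a genuine three-point configuration contains a two-point one, and as the inscribed triangle degenerates to a diameter its area decreases to this same value'' is not a valid reduction: a three-point subset of $\partial B_R(z)$ whose convex hull contains $z$ need not contain an antipodal pair, so to reach a diametrical two-point configuration you must \emph{move} points along $\partial B_R(z)$, and there is no a~priori reason the area of $\mathrm{conv}(B_r(0)\cup S)$ stays above the two-cup value throughout such a motion. A rigorous argument here requires either a direct computation bounding the three-cap area from below by $2\bigl(r\sqrt{R^2-r^2}+r^2\arcsin(r/R)\bigr)$, or a monotonicity/convexity argument for the cap-area function together with control on the admissible apex distances.

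For the upper bound the same issue arises in mirror image: with $k\ge 3$ incircle contact points, dropping all but two of the halfplanes $H_i$ leaves a wedge rather than a strip, and $|B_R(z)\cap H_i\cap H_j|$ for a non-antipodal pair can exceed $\varphi(R,r)$, so the $k=2$ computation does not transfer. Your proposed ``deformation that keeps $r$ and $R$ fixed while increasing the area and lowering the number of incircle contact points'' is exactly the kind of argument used in \cite{cifre_salinas}, but it has to be carried out, not just announced. In short: the strategy is sound, the easy cases are handled, but both hard cases are left at the heuristic level.
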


Now we quote two inequalities concerning the $(A,\omega,r)$ and $(P,\omega,r)$ diagrams.

\begin{theorem}[\cite{cifre_salinas}, Theorem 5] \label{cifre_salinas_thm_rw}
Let $\Om \in \mathcal{K}^2$. We have
\begin{equation} \label{cifre_salinas_rwA}
    (\omega(\Om)-2r(\Om))^2(4r(\Om)-\omega(\Om))\abs{\Om}^2\le r^4(\Om)\omega^3(\Om)
\end{equation}
and 
\begin{equation} \label{cifre_salinas_rwP}
    (\omega(\Om)-2r(\Om))^2(4r(\Om)-\omega(\Om))P^2(\Om)\le 4 r(\Om)^2\omega^3(\Om)
\end{equation}
In both inequalities, the equality holds if and only if $\Om$ is a subequilateral triangle.
\end{theorem}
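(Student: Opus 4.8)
The plan is to reduce to the case of a triangle by enclosing $\Om$ in a circumscribed triangle with the same incircle, and then to settle the triangle case by a short computation; for triangles the two inequalities will in fact be equivalent. First I would normalize $r(\Om)=1$ (both inequalities are scaling‑invariant) and write $\omega=\omega(\Om)$. Since the incircle lies in $\Om$ we have $\omega\ge 2$, and when $\omega=2$ the left‑hand sides vanish while the right‑hand sides are positive, so the statement holds trivially and strictly; hence I assume $\omega>2$. To build the enclosing triangle, let $B_1(O)$ be the incircle and $N$ its contact set with $\partial\Om$. Maximality forces $O\in\operatorname{conv}(N)$, and $O$ cannot be a convex combination of just two points of $N$: those would be antipodal on $\partial B_1(O)$, and their parallel tangent lines, being supporting lines of $\Om$, would confine $\Om$ to a strip of width $2$, contradicting $\omega>2$. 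By Carath\'eodory there are thus three points $Q_1,Q_2,Q_3\in N$ (never collinear, being on a circle) with $O$ interior to the triangle they span; since the circle is smooth, the tangent to $B_1(O)$ at $Q_i$ is the unique supporting line of $\Om$ there, so $\Om$ lies in the triangle $T$ cut out by these three lines, and $B_1(O)$ is the incircle of $T$, i.e.\ $r(T)=1$. Monotonicity of area, perimeter and minimal width under inclusion then yields $|\Om|\le|T|$, $P(\Om)\le P(T)$ and $\omega\le\omega(T)$.

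For the triangle case, take $T$ with angles $\alpha,\beta,\gamma$ and inradius $1$, and set $x=\tan\frac\alpha2$, $y=\tan\frac\beta2$, $z=\tan\frac\gamma2$, so $xy+yz+zx=1$; relabel so that $z=\max(x,y,z)$, i.e.\ $\gamma$ is the largest angle. The tangent segments from the vertex of angle $\alpha$ to the incircle have length $1/x$ (and similarly for $\beta,\gamma$), so the semiperimeter is $s=\frac1x+\frac1y+\frac1z=\frac1{xyz}$, giving $|T|=s=\frac1{xyz}$, $P(T)=2s=2|T|$, while the longest side (opposite $\gamma$) equals $\frac1x+\frac1y$, so $\omega(T)=\frac{2|T|}{1/x+1/y}=\frac{2}{z(x+y)}$. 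Using $z(x+y)=1-xy$, substituting into $(\omega(T)-2)^2(4-\omega(T))|T|^2\le\omega(T)^3$ reduces it to $zx+zy-xy\le z^2$, i.e.\ $(z-x)(z-y)\ge0$, which holds since $z$ is the largest; equality occurs exactly when $z=x$ or $z=y$, i.e.\ the two largest angles of $T$ are equal, which is precisely the subequilateral case (the equilateral triangle occurring as the limit $\omega(T)=3$). Specializing to $x=y\le z$ also gives $\omega(T)\le3$, hence $\omega\le\omega(T)\le3$ for $\Om$. Finally, since $P(T)=2|T|$, the perimeter inequality for $T$ is literally the area inequality for $T$.

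To transfer back, set $g(\omega):=(\omega-2)^2(4-\omega)\omega^{-3}$; logarithmic differentiation gives $g'(\omega)=g(\omega)\cdot\frac{8(3-\omega)}{\omega(\omega-2)(4-\omega)}$, so $g$ is increasing on $[2,3]$. Since $2<\omega\le\omega(T)\le3$ we get $g(\omega)\le g(\omega(T))$, and so
\[
|\Om|^{2}(\omega-2)^{2}(4-\omega)=|\Om|^{2}\omega^{3}g(\omega)\le|T|^{2}\omega^{3}g(\omega(T))=\omega^{3}\,|T|^{2}(\omega(T)-2)^{2}(4-\omega(T))\,\omega(T)^{-3}\le\omega^{3},
\]
which is \eqref{cifre_salinas_rwA} for $r=1$; running the same chain with $P$ in place of $|\Om|$ and $P(T)^{2}g(\omega(T))=4|T|^{2}g(\omega(T))\le4$ gives \eqref{cifre_salinas_rwP} for $r=1$, and the general case follows by homogeneity. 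For the equality cases, equality forces $|\Om|=|T|$ (resp.\ $P(\Om)=P(T)$), hence $\Om=T$ by strictness of the monotonicity of area (resp.\ perimeter) under strict inclusion of convex bodies, and then the triangle computation identifies $\Om$ as a subequilateral triangle.

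The real content, rather than a genuine obstacle, is choosing the right scale‑invariant quantity: although $|\Om|$ and $P(\Om)$ are \emph{unbounded} among convex bodies of fixed inradius (flat triangles have $\omega$ near $2r$ and arbitrarily large area and perimeter), the products $|\Om|^2g(\omega)$ and $P(\Om)^2g(\omega)$ are bounded, and $g$ is monotone exactly on the admissible range $[2r,3r]$ — this monotonicity is what makes the enclosure $\Om\subseteq T$ lossless. Were $g$ not monotone there, the circumscribed‑triangle reduction would fail and one would instead argue by perturbing $\Om$ directly. The triangle case itself is essentially immediate once the half‑angle variables are used, since then $|T|=1/(xyz)$, the angle constraint is $xy+yz+zx=1$, and the whole inequality collapses to $(z-x)(z-y)\ge0$.
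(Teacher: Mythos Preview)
The paper does not prove this theorem: it is quoted from \cite{cifre_salinas} without proof, so there is no in-paper argument to compare against. Your proof is correct. The reduction to a circumscribed triangle with the same incircle is exactly the mechanism of the original Cifre--Salinas argument (and the paper itself invokes this device, citing \cite[Theorem 5]{cifre_salinas}, in the proof of Proposition \ref{prop_hwr}).

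Your write-up makes two points explicit that are worth keeping. First, the half-angle parametrization $x=\tan\frac\alpha2$, $y=\tan\frac\beta2$, $z=\tan\frac\gamma2$ with $xy+yz+zx=1$ reduces the triangle inequality to $(z-x)(z-y)\ge 0$, which is pleasantly transparent and immediately yields both the equality characterization and the identity $P(T)=2|T|$ (so that the two inequalities coincide for triangles). Second, the transfer step via the monotonicity of $g(\omega)=(\omega-2)^2(4-\omega)\omega^{-3}$ on $[2,3]$ is exactly what is needed because the enclosure $\Om\subset T$ increases $\omega$; you compute $g'/g=8(3-\omega)/[\omega(\omega-2)(4-\omega)]$ correctly, and this pins down why the admissible range $\omega\in[2r,3r]$ is the right one. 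The equality analysis is also clean: $|\Om|=|T|$ (or $P(\Om)=P(T)$) with $\Om\subset T$ convex forces $\Om=T$, and then the triangle case gives the subequilateral characterization (with the equilateral triangle as the endpoint $\omega=3r$).

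One cosmetic remark: the existence of the circumscribed triangle with the same incircle is classical (Bonnesen--Fenchel), and the paper cites \cite{bonnesen} for precisely the dichotomy you use (two antipodal contact points giving $\omega=2r$, or three contact points spanning a triangle); you could cite that instead of rederiving it.
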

Finally, we recall this result from the $(d, \omega,r)$ diagram.
\begin{theorem}[\cite{cifre4}, Theorem 1-2]\label{cifre_wrd_thm}
Let $\Omega\in\mathcal{K}^2$. We have
  \begin{equation}\label{cifre_wrd_eq_1}
        d^2(\Om)(\omega(\Om)-2r(\Om))^2(4r(\Om)-\omega(\Om))\le 4r^4(\Om)\omega(\Om),
    \end{equation}
    where the equality holds if and only if $\Om$ is a subequilateral triangle $T_I$, and 
    \begin{equation}\label{cifre_wrd_eq_2}
        \omega(\Om)-r(\Om)\leq \dfrac{\sqrt{3}}{3} d(\Om),
    \end{equation}
    where the equality holds if $\Om$ is a Yamanouti set.
\end{theorem}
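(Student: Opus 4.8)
My plan is to prove the two inequalities separately; \eqref{cifre_wrd_eq_2} is elementary, whereas \eqref{cifre_wrd_eq_1} requires a reduction to triangles which is the bulk of the work.

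\medskip
\noindent\emph{The bound \eqref{cifre_wrd_eq_2}.} I would combine Jung's inequality in the plane, $R(\Om)\le \tfrac{\sqrt3}{3}\,d(\Om)$, with Santaló's inequality \eqref{santalo_eq_1}, $\omega(\Om)\le R(\Om)+r(\Om)$, to obtain
\begin{equation*}
\omega(\Om)\ \le\ R(\Om)+r(\Om)\ \le\ \tfrac{\sqrt3}{3}\,d(\Om)+r(\Om),
\end{equation*}
which is \eqref{cifre_wrd_eq_2}. For the equality statement I would check, using the three-fold symmetry, that the Yamanouti set $Y$ built on an equilateral triangle of side $a$ with circular arcs of radius $\rho$ satisfies $d(Y)=a$, $R(Y)=a/\sqrt3$, $r(Y)=\rho-a/\sqrt3$ and $\omega(Y)=\rho$; hence both Jung's and Santaló's inequalities become equalities for $Y$ and $\omega(Y)-r(Y)=a/\sqrt3=\tfrac{\sqrt3}{3}d(Y)$.

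\medskip
\noindent\emph{The bound \eqref{cifre_wrd_eq_1}.} Both sides being $5$-homogeneous, I would normalise $r(\Om)=1$. Since the incircle has width $2$ in every direction, $\omega(\Om)\ge 2$, and since $\omega(\Om)\le 3r(\Om)=3$ (classical; this is exactly the admissibility constraint of the class $\mathcal K^2_{\omega,r}$), we have $4-\omega(\Om)>0$. If $\omega(\Om)=2$ the left-hand side vanishes, so assume $2<\omega:=\omega(\Om)\le 3$. With $r=1$ the inequality \eqref{cifre_wrd_eq_1} is equivalent to the diameter bound
\begin{equation}\label{eq:dmax}
d(\Om)\ \le\ \frac{2\sqrt\omega}{(\omega-2)\sqrt{4-\omega}},\qquad r(\Om)=1,\ \ \omega(\Om)=\omega,
\end{equation}
with equality (to be shown) exactly for the subequilateral triangle with apex angle $\alpha$ given by $\sin(\alpha/2)=(\omega-2)/2$; so it suffices to prove \eqref{eq:dmax}. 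I would do this in three steps. \emph{(1)~Existence:} the maximum of $d$ over $\mathcal K^2_{\omega,r}$ (with $r=1$) is attained at some $\Om^\star$, by Blaschke's selection theorem and the Hausdorff-continuity of $r,\omega,d$ (compactness of the normalised class being handled as in Theorem~\ref{th:existence}); fix the incircle of $\Om^\star$ to be $B_1$. \emph{(2)~Reduction to a triangle:} let $P,Q\in\partial\Om^\star$ be diametral points; then $\Om^\star\supseteq\Om_0:=\mathrm{conv}(B_1\cup\{P,Q\})$, and one checks that $\Om_0$ already has inradius $1$ and diameter $\|P-Q\|=d(\Om^\star)$, while $\omega(\Om_0)\le\omega$. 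A perturbation argument should then force $\Om^\star=\Om_0$ and, moreover, the two tangent lines from $P$ to $B_1$ and the two from $Q$ to collapse to three lines, i.e.\ $\Om^\star$ is a triangle with vertices $P,Q,V$ and incircle $B_1$: otherwise (an arc of $B_1$ still exposed on $\partial\Om^\star$, or $P$ and $Q$ not symmetric about the axis through $V$ and the incenter) one could move $P$ or $Q$ outward along $\partial\Om_0$ to strictly increase $\|P-Q\|$ and restore the width $\omega$ by a compensating motion of $V$ without changing $r=1$; the resulting triangle is isosceles by this symmetry, and subequilateral because $\omega\le 3$. \emph{(3)~Computation:} for the subequilateral triangle with incircle $B_1$ and apex angle $\alpha$, the apex is at distance $1/\sin(\alpha/2)$ from the incenter and the opposite side is tangent to $B_1$ on the far side, so the legs have length $\ell=(1+\sin(\alpha/2))/(\sin(\alpha/2)\cos(\alpha/2))$, the minimal width equals the altitude onto a leg, $\omega=2(1+\sin(\alpha/2))$, and $d=\ell$; eliminating $\alpha$ gives $\sin(\alpha/2)=(\omega-2)/2$ and $d=\tfrac{2\sqrt\omega}{(\omega-2)\sqrt{4-\omega}}$, which is \eqref{eq:dmax} together with its equality case.

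The only genuinely delicate point is Step~(2): turning the heuristic ``the thinnest convex body of inradius $1$ through a fixed long segment is a triangle'' into a rigorous perturbation argument that simultaneously keeps the two constraints $r(\cdot)=1$ and $\omega(\cdot)=\omega$ under control. I expect the cleanest organisation is to fix $d$ and $r=1$ instead and \emph{minimise} $\omega$: show the minimiser has the form $\mathrm{conv}(B_1\cup\{P,Q\})$ with $\|P-Q\|=d$, then that its four extremal tangent lines must collapse to three (producing an isosceles triangle with incircle $B_1$), and finally invert the strictly increasing relation $\omega\mapsto d$ so obtained to recover \eqref{eq:dmax}.
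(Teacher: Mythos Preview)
The paper does not give its own proof of this statement: Theorem~\ref{cifre_wrd_thm} is quoted verbatim from \cite{cifre4} as a preliminary result and is used as a black box later on. So there is no ``paper's proof'' to compare against; I can only assess your argument on its own merits.

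Your proof of \eqref{cifre_wrd_eq_2} is complete and correct. Combining Jung's inequality $R\le d/\sqrt3$ with $\omega\le R+r$ (Theorem~\ref{santalo_thm_1}) immediately gives the bound, and your verification that Yamanouti sets realise equality in both steps is right: with your parametrisation $d(Y)=a$, $R(Y)=a/\sqrt3$, $r(Y)=\rho-a/\sqrt3$, $\omega(Y)=\rho$, one indeed gets $\omega(Y)-r(Y)=a/\sqrt3=d(Y)/\sqrt3$.

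For \eqref{cifre_wrd_eq_1}, your normalisation and your Step~3 computation are correct: for the subequilateral triangle with $r=1$ and apex angle $\alpha$ one has $\omega=2(1+\sin(\alpha/2))$ and $d=(1+\sin(\alpha/2))/(\sin(\alpha/2)\cos(\alpha/2))$, and eliminating $\alpha$ gives exactly $d=2\sqrt\omega/\big((\omega-2)\sqrt{4-\omega}\big)$. The existence Step~1 is also fine. The gap is Step~2, and you identify it yourself: the passage from a general maximiser to a triangle is only sketched. Concretely, the claim that ``one could move $P$ or $Q$ outward along $\partial\Om_0$ to strictly increase $\|P-Q\|$ and restore the width $\omega$ by a compensating motion of $V$ without changing $r=1$'' needs justification---you have to show such a perturbation exists while simultaneously preserving the incircle and the width constraint, and that the resulting set is still admissible. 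Your alternative reformulation (fix $d$ and $r=1$, minimise $\omega$) is indeed cleaner: it gives immediately that the minimiser can be taken of the form $\mathrm{conv}(B_1\cup\{P,Q\})$, since passing from $\Om^\star$ to this subset keeps $d$ and $r$ and can only decrease $\omega$. But you still owe the argument that among all such ``double caps'' with fixed $\|P-Q\|=d$ and incircle $B_1$, the minimum width is achieved precisely when the four tangent lines degenerate to three (yielding an isosceles triangle). This is plausible and presumably how \cite{cifre4} proceeds, but as written it remains a heuristic rather than a proof.
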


\subsection{Inequalities relating the Cheeger constant to one geometric quantity} 
In the following paragraph, we state the inequalities relating the Cheeger constant to one of the geometric quantities taken into account, obtained by combining classical results.
\begin{proposition}\label{two}
Let $\Om\in\K^2$. We have
\begin{enumerate}
    \item $h(\Om)\ge 2\sqrt{\frac{\pi}{|\Om|}}$.
    \item $h(\Omega)\ge\frac{4\pi}{P(\Om)}$.
    \item $h(\Om)\geq \frac{4}{d(\Om)}$.
    \item $\frac{1}{r(\Om)}\leq h(\Om)\leq \frac{2}{r(\Om)}$.
    \item $h(\Om)\geq \frac{2}{R(\Om)} $. 
\end{enumerate}
In $(1)-(2)-(3)-(5)$, the equality is achieved by balls, while the one in the upper bound in $(4)$ is achieved by balls and the lower bound is asymptotically an equality for thinning vanishing stadiums.
\end{proposition}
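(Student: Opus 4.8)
The plan is to derive each of the five bounds from classical geometric inequalities together with the two known two-sided estimates recalled in the introduction, namely \eqref{eq:hra} and \eqref{eq:hpa}, and the isoperimetric inequality; the equality cases will be read off by tracking when each classical inequality is sharp.

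For (1), I would start from the lower bound in \eqref{eq:hpa}, i.e. $h(\Om)\ge \frac{P(\Om)+\sqrt{4\pi|\Om|}}{2|\Om|}$, and apply the isoperimetric inequality $P(\Om)\ge\sqrt{4\pi|\Om|}$ to get $h(\Om)\ge \frac{2\sqrt{4\pi|\Om|}}{2|\Om|}=\sqrt{\frac{4\pi}{|\Om|}}=2\sqrt{\frac{\pi}{|\Om|}}$; equality forces equality in the isoperimetric inequality, so $\Om$ is a ball (and one checks the ball is indeed Cheeger of itself, so \eqref{eq:hpa} is also sharp there). For (2), combine $h(\Om)\ge 2\sqrt{\pi/|\Om|}$ from (1) with the isoperimetric inequality in the form $|\Om|\le P(\Om)^2/(4\pi)$ to obtain $h(\Om)\ge 2\sqrt{\pi}\cdot\frac{2\sqrt\pi}{P(\Om)}=\frac{4\pi}{P(\Om)}$, again with equality only for balls. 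Alternatively (2) follows directly by taking $E=\Om$ in the infimum \eqref{chee} only for the reverse inequality, so the stated direction really does need the isoperimetric input.

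For (3) I would use the inclusion $\Om\subseteq B_{d(\Om)/2}$ (every convex set of diameter $d$ lies in a ball of radius $d/2$, e.g.\ by Jung's theorem or directly), hence $|\Om|\le \pi d(\Om)^2/4$; feeding this into (1) gives $h(\Om)\ge 2\sqrt{\pi}\big/\sqrt{\pi d^2/4}=\frac{4}{d(\Om)}$, with equality only if $\Om$ is a ball of diameter $d$. For (4), the lower bound $h(\Om)\ge 1/r(\Om)$ is immediate from the lower bound in \eqref{eq:hra} since $\pi r(\Om)/|\Om|>0$; the upper bound $h(\Om)\le 2/r(\Om)$ comes from testing \eqref{chee} with $E=B_{r(\Om)}\subseteq\Om$, giving $h(\Om)\le P(B_r)/|B_r|=2\pi r/(\pi r^2)=2/r(\Om)$, and equality in this test holds exactly when $\Om$ is itself a ball (so that its Cheeger set is the whole ball). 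The asymptotic sharpness of the lower bound for thin stadiums: for a stadium of inradius $r$ with half-length $L\to\infty$, the Cheeger constant is $\frac1r+\frac{\pi r}{|\Om|}$ by \eqref{eq:hra}'s equality case for stadiums, and $|\Om|=\pi r^2+4rL\to\infty$, so $h\to 1/r$. Finally (5): from $R(\Om)\ge r(\Om)$ and part (4), $h(\Om)\le 2/r(\Om)$ is the wrong direction, so instead I would use $\Om\subseteq B_{R(\Om)}$ hence $|\Om|\le\pi R(\Om)^2$, and plug this into (1) to get $h(\Om)\ge 2\sqrt{\pi}\big/\sqrt{\pi R^2}=\frac{2}{R(\Om)}$, with equality only for balls of radius $R$.

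The only real subtlety — the "main obstacle," though it is minor — is the equality analysis: one must verify that for a ball the chain of inequalities is actually saturated at every step, which amounts to checking that a ball is a Cheeger set of itself (so that \eqref{eq:hpa} holds with equality) and that $h(B_R)=2/R$; both are standard. A secondary point is the asymptotic statement in (4), where one should be careful to invoke the exact equality case of \eqref{eq:hra} (stadiums realize the lower bound) and then simply let the length tend to infinity while keeping $r$ fixed, observing $\pi r/|\Om|\to 0$. No new ideas beyond bookkeeping are needed.
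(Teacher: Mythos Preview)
Your overall strategy matches the paper's: establish (1) first and then feed classical area bounds into it for (2), (3), (5), with (4) handled by the inball test and a direct lower bound. Two remarks are in order.

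First, a concrete error in your argument for (3): the inclusion $\Om\subseteq B_{d(\Om)/2}$ is false in general. An equilateral triangle of side $d$ has diameter $d$ but circumradius $d/\sqrt{3}>d/2$, and Jung's theorem only gives containment in a ball of radius $d/\sqrt{3}$. What you actually need, and what the paper invokes directly, is the isodiametric (Bieberbach) inequality $|\Om|\le \frac{\pi}{4}d(\Om)^2$; this does \emph{not} follow from any inclusion argument. With that correction your proof of (3) goes through exactly as the paper's.

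Second, on the routes for (1) and the lower bound in (4): you derive (1) by combining \eqref{eq:hpa} with the isoperimetric inequality applied to $\Om$, and the lower bound in (4) from \eqref{eq:hra}. The paper instead works directly inside the infimum defining $h(\Om)$: for (1) it applies the isoperimetric inequality $P(E)\ge 2\sqrt{\pi|E|}$ to each competitor $E$, and for (4) it uses $|E|<r(E)P(E)$ together with $r(E)\le r(\Om)$. Both approaches are valid; the paper's is slightly more self-contained (it does not rely on the cited results \eqref{eq:hra} and \eqref{eq:hpa}), while yours is a legitimate shortcut given that those inequalities are already available in the introduction.
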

\begin{proof}
\begin{itemize}
    \item We have, by using the isoperimetric inequality $P(E)\ge 2\sqrt{\pi|E|}$,
    \begin{equation}\label{h1}
        h(\Om)= \inf_{\underset{E\subset \Omega,\  |E|>0}{E\text{ is measurable }}}\frac{P(E)}{|E|}\ge \inf_{\underset{E\subset \Omega,\  |E|>0}{E\text{ is measurable }}} \frac{2\sqrt{\pi}\sqrt{|E|}}{|E|}= \inf_{\underset{E\subset \Omega,\  |E|>0}{E\text{ is measurable }}} \frac{2\sqrt{\pi}}{\sqrt{|E|}}= \frac{2\sqrt{\pi}}{\sqrt{|\Om|}}.
    \end{equation}
 
    \item By using \eqref{h1} and the isoperimetric inequality, we have
    $$h(\Om)\ge \frac{2\sqrt{\pi}}{\sqrt{|\Om|}} \ge \frac{4\pi}{P(\Om)}. $$
    
    \item By using \eqref{h1} and the isodiametric inequality $|\Om|\leq \frac{\pi}{4}d(\Om)^2$, we have
    $$h(\Om)\ge \frac{2\sqrt{\pi}}{\sqrt{|\Om|}} \ge \frac{2\sqrt{\pi} }{\sqrt{\frac{\pi}{4}d(\Omega)^2}}=\frac{4}{d(\Omega)}.$$
   
    \item For the upper bound, we have 
    $$h(\Omega)= \inf_{\underset{E\subset \Omega,\  |E|>0}{E\text{ is measurable }}}\frac{P(E)}{|E|}\leq \frac{P(B_{r(\Om)})}{|B_{r(\Om)}|}= \frac{2}{r(\Om)},$$
    where $B_{r(\Om)}$ is a ball of radius $r(\Om)$ inscribed in $\Om$. As, for the lower bound, we have 
    $$h(\Om)= \inf_{\underset{E\subset \Omega,\  |E|>0}{E\text{ is measurable }}}\frac{P(E)}{|E|} \ge \inf_{\underset{E\subset \Omega,\  |E|>0}{E\text{ is measurable }}}\frac{1}{r(E)}\ge \frac{1}{r(\Om)},$$
    where we use the inequalities $|E|<r(E)P(E)$ (see \cite{inequalities_convex}) and $r(E)\leq r(\Om)$.
    \vspace{2mm}
    \item By using \eqref{h1} and the inequality $|\Om|\leq \pi R(\Om)^2$ (see \cite{inequalities_convex}), we have 
    $$h(\Om)\ge \frac{2\sqrt{\pi}}{\sqrt{|\Om|}} \ge \frac{2}{R(\Om)}.$$
\end{itemize}
\end{proof}

\section{Numerical results and Blaschke--Santal\'o diagrams}\label{secnum}
In this Section, we introduce numerical tools, that we use to obtain more information on the diagrams and state some conjectures.

\subsection{Generation of random convex polygons}
We want to provide a numerical approximation of the diagrams studied in Section \ref{partial}. To do so, a natural idea is to generate a large number of convex bodies (more precisely convex polygons). Subsequently, for each of these sets, we calculate the involved functionals. Nevertheless, the task of (properly) generating random convex polygons is quite challenging and of intrinsic interest. The main difficulty is that one wants to design an efficient and fast algorithm that allows to obtain a uniform distribution of the generated random convex polygons. For clarity, let us discuss two different (naive) approaches:
\begin{itemize}
    \item one easy way to generate random convex polygons is by rejection sampling. We generate a random set of points in a square; if they form a convex polygon, we return it, otherwise we try again. Unfortunately, the probability of a set of $n$ points uniformly generated inside a given square to be in convex position is equal to $p_n = \left(\frac{\binom{2n-2}{n-1}}{n!}\right)^2$, see \cite{random_polygon}. Thus, the random variable $X_n$ corresponding to the expected number of iterations needed to obtain a convex distribution follows a geometric law of parameter $p_n$, which means that its expectation is given by $\mathbb{E}(X_n)=\frac{1}{p_n}=\left(\frac{n!}{\binom{2n-2}{n-1}}\right)^2 $. For example, if $N=20$, the expected number of iterations is approximately equal to $2.10^9$, and, since one iteration is performed in an average of $0.7$ seconds, this means that the algorithm will need about $50$ years to provide one convex polygon with $20$ sides; 
    \item another natural approach is to generate random points and take their convex hull. This method is quite fast, as one can compute the convex hull of $N$ points in a $\mathcal{O}(N\log(N))$ time (see \cite{MR475616} for example), but it is not quite relevant since it yields to a biased distribution.   
\end{itemize}

In order to avoid the issues stated above, we use an algorithm presented in \cite{sander}, that is based on the work of P. Valtr \cite{random_polygon}, where the author computes 
the probability of a set of $n$ points uniformly generated inside a given square to be in convex position. The author remarks (in Section 4) that the proof yields a fast and 
non-biased method to generate random convex sets inside a given square. We also refer to \cite{sander} for a nice description of the steps of the method and a beautiful 
animation where one can follow each step; one can also find an implementation of Valtr's algorithm in Java that we decided to translate in Matlab.

To obtain an approximation of the Blaschke--Santaló
 diagram, we generate $100.000$ random convex polygons of unit area and number of sides between $3$ and $30$, for which we compute the involved functionals. We then obtain 
clouds of dots that provide approximations of the diagrams. This approach has been used in several works, we refer for example 
to \cite{AH11}, \cite{ftJMAA} and \cite{FL21}. For a new and efficient method of the approximation of Blaschke--Santal\'o diagrams based on centroidal Voronoi tessellations, we refer to the recent work \cite{bogosel2023numerical}. 

\subsection{About the computation of the functionals} 
Let us give few details on the numerical computation of the functionals involved in the paper.
\begin{itemize}
    \item The \textbf{Cheeger constant} is computed by using a code implemented by Beniamin Bogosel in \cite{zbMATH07173414} based on the characterization of the Cheeger sets of planar convex sets given in \cite{kawohl} and the \texttt{Clipper} toolbox, a very good implementation of polygon offset computation by Agnus Johnson.
    \item The \textbf{inradius} is also computed by using the \texttt{Clipper} toolbox and the fact that $r(\Omega)$ is the smallest solution to the equation $|\Om_{-t}|=0$. 
    \item The \textbf{diameter} is computed via a fast method of computation, which consists in finding all antipodal pairs of points and looking for the diametrical one between them. This is classically known as Shamos algorithm \cite{MR805539}. 
    \item The \textbf{area} is computed by using Matlab's function \texttt{"polyarea"}. 
    \item The \textbf{minimum width} of a polygon $\Om$ of vertices $\{A_1,\cdots,A_N\}$ is computed by using the following formula
    $$\omega(\Om)=\min_{i\in \llbracket 1,N \rrbracket} \max_{j\in \llbracket 1,N \rrbracket} {\rm dist}(A_j,(A_iA_{i+1})),$$
    where ${\rm dist}(A_j,(A_iA_{i+1}))$ corresponds to the distance between the point $A_j$ and the line $(A_iA_{i+1})$ (with the convention $A_{N+1}:=A_1$). 
    \item The \textbf{circumradius} of a convex set $\Omega$ can be written as follows
    $$R(\Om) = \min_{c\in \Omega}\max_{x \in \Omega} \|c-x\|.$$
    It is computed by using Matlab's routine \texttt{"fminmax"}. 
\end{itemize}

\section{Proof of the main Results} \label{sec3}
\subsection{Proof of Theorem \ref{th:existence} }
We start this Section by proving the existence results stated in Theorem \ref{th:existence}.

\begin{proof}[Proof of the existence] 
Let us consider the minimization problem of the Cheeger constant in the classes of sets $(1)-(13)$; the maximization problem can be dealt with similarly.

For all of these classes of sets, in order to prove the existence of the solution, we consider a minimizing sequence $(\Omega_k)_{k\in\N}$ and we prove that it satisfies the hypothesis of the Blaschke Selection Theorem (see \cite[Theorem 1.8.7]{schneider}), that is to say, its boundedness up to translations. Concerning the class of sets involving a diameter or a circumradius constraint, it is clear that, up to a translation, the minimizing sequence is contained in a sufficiently big ball. So it remains to study the problem in $\mathcal{K}^2_{P,r}$, $\mathcal{K}^2_{\omega,A}$, $\mathcal{K}^2_{r,\omega}$ and $\mathcal{K}^2_{\omega,P}$. Concerning $\mathcal{K}^2_{P,r}$ and $\mathcal{K}^2_{\omega,P}$, we know from \cite{inequalities_convex} that $P=P(\Omega_k)>2 d(\Om_k)$, for every $k$, so the sequence of the diameters $d(\Om_k)$ is equibounded and, consequently, there exists a sufficiently big ball containing the sequence $(\Om_k)_{k}$.
As far as $\mathcal{K}^2_{r,\omega}$ is concerned, it is possible to prove the boundness of the minimizing sequence whenever $\omega(\Om_k)>2r(\Om_k)$, indeed it holds (see \cite{inequalities_convex})
$$d(\Om_k)\le \frac{\omega^2(\Om_k)}{2(\omega(\Om_k)-2r(\Om_k))}.$$
For the last class $\mathcal{K}^2_{\omega,A}$, from \cite{inequalities_convex}, we know that, if $2\omega(\Om_k)\leq \sqrt{3} d(\Om_k)$, then $$2\abs{\Om_k}\ge \omega(\Om_k)d(\Om_k),$$and, also in this case, the boundedness follows.

So, for every class of sets considered, the Blaschke selection theorem ensures us that, up to a subsequence, $(\Omega_k)$ converges with respect to the Hausdorff distance to a convex set $\Om^*$; it remains only to prove that this set belongs to the relative class of admissible sets. We observe that all the considered constraints are stable for the Hausdorff convergence. In particular, the stability of the inradius is proved in in \cite{delyon2} and the one of the diameter is proved in \cite{pierre}, meanwhile the stability of the area, the perimeter and the width may be found in \cite{schneider}. 

It only remains to show that the circumradius is continuous with respect to the Hausdorff distance in the class of admissible sets having a circumradius constraint. Since $R(\Om_k)=R$, for all $k\in\N$, we have $\Om_k\subseteq B_R$. Using the stability of the Hausdorff convergence for the inclusion (see \cite[Proposition 2.2.17]{pierre}), we have that $\Om^*\subseteq B_R$, and consequently $R(\Om^*)\le R$. By contradiction, let us suppose that $R(\Om^*)<R$, so there exists $\overline{R}>0$ such that $R(\Om^*)<\overline{R}<R$ and so $\Om^*\subseteq B_{\overline{R}}$. Therefore, by the Hausdorff convergence, for sufficiently large $k$, $\Om_k\subseteq B_{\overline{R}}$, but this would imply $R\le \overline{R}$, which is absurd.

In order to conclude, we observe that in all the above cases, the set $\Om^*$ cannot be a segment, that is to say, that the minimizing sequence $(\Om_k)_{k\in \N}$ cannot degenerate loosing one dimension. If we are working in a class of sets involving an inradius or width constraint, then, it is clear that, thanks to the continuity of the inradius and width under the Hausdorff convergence, there exists, up to a translation, a sufficiently small ball contained in the minimizing sequence. 

Moreover, in the case $\mathcal{K}^2_{d,A}$ and $\mathcal{K}^2_{R,A}$, the non-degeneration is ensured by the continuity of the area under Hausdorff distance and the equiboundedness of the diameter. On the other hand, 
if we consider the minimization problem in $\mathcal{K}^2_{R,d}$, $\mathcal{K}^2_{P,d}$ and $\mathcal{K}^2_{P,R}$, the inradius can be bounded from below by a positive quantity. 
In \cite[Section 9]{santalo}, it is proved that 
\begin{equation*}
    r(\Omega_k)\geq \dfrac{d^2(\Omega_k)\sqrt{4 R^2(\Omega_k)-d^2(\Omega_k)}}{2R(\Omega_k)\left(2 R(\Omega_k)+\sqrt{4 R^2(\Omega_k)-d^2(\Omega_k)}\right)}.
\end{equation*}

In \cite[Section 3]{henk}, it is proved that
\begin{equation*}
    r(\Omega_k)\geq \frac{P(\Om_k)}{4}-\frac{d(\Om_k)}{2},
\end{equation*}
which also yields to 
\begin{equation*}
    r(\Omega_k)\geq \dfrac{ P(\Omega_k)}{4}-R(\Omega_k),
\end{equation*}
as $d(\Om_k)\leq 2R(\Om_k)$. 

Recalling now that the Cheeger constant is continuous with respect to the Hausdorff convergence when the sets do not degenerate to a segment (see \cite[Proposition 3.1]{reverse_cheeger}), the existence part of the theorem is proved. 
\end{proof}

\subsection{Proof of Theorem \ref{th2}}
The following paragraphs of this Section are dedicated to the proof of the explicit bounds and their sharpness.

\subsubsection{The triplet $(P,h,r)$}
\begin{proposition}\label{prop_hrP}
Let $\Om\in \mathcal{K}^2$. Then, it holds
\begin{equation}\label{eq:hrP}
\frac{1}{r(\Omega)} +\frac{\pi}{P(\Omega)-\pi r(\Omega)} \leq h(\Omega)\leq \frac{1}{r(\Omega)}+\sqrt{\frac{2\pi }{P(\Omega)r(\Omega)}},    
\end{equation}
where the equality is achieved by sets that are homothetic to their form bodies in the upper bound and by the stadiums in the lower bound. 
\end{proposition}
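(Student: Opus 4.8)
The plan is to apply Lemma \ref{lem:main} in both directions, which reduces the problem to finding suitable functions $g^\Om$ bounding $|\Om_{-t}|$ from above and below in terms of $P(\Om)$ and $r(\Om)$. For the upper bound on $h(\Om)$ (which by \eqref{kawohl2} corresponds to the ``resp.'' case, i.e.\ a lower bound on $|\Om_{-t}|$), I would use the known inequality $|\Om_{-t}| \ge |\Om| - P(\Om) t + \pi t^2$, which follows from the fact that $\Om_{-t} + tB_1 \subseteq \Om$ together with the Steiner formula $|\Om_{-t}+tB_1| = |\Om_{-t}| + P(\Om_{-t})t + \pi t^2$ and the monotonicity $P(\Om_{-t}) \le P(\Om)$ from \eqref{perimeter}. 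Actually the cleaner route is to use the concavity of $t \mapsto |\Om_{-t}|^{1/2}$ or, more elementarily, $|\Om_{-t}| \ge |\Om|(1 - t/r(\Om))^2$; but to get the stated bound one wants to eliminate $|\Om|$ in favor of $P$ and $r$ via the isoperimetric-type inequality $|\Om| \ge \tfrac12 r(\Om) P(\Om)$ (valid for convex sets, since $\Om \supseteq$ the union over inner parallel sets and $|\Om| = \int_0^{r(\Om)} P(\Om_{-t})\,dt \ge$ an appropriate bound; in fact $|\Om|\ge r P/2$ is a standard convex-geometry fact). Setting $g^\Om(t) := \tfrac{r P}{2}\bigl(1 - t/r\bigr)^2$ — wait, this is not quite right either; the genuinely sharp choice, which produces the $\sqrt{2\pi/(Pr)}$ term, is $g^\Om(t) = \tfrac12 (r-t)(P - 2\pi t)$, coming from $|\Om_{-t}| \ge \tfrac12 r(\Om_{-t}) P(\Om_{-t}) = \tfrac12 (r(\Om)-t)\,P(\Om_{-t})$ combined with \eqref{inr} and \eqref{perimeter}. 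Then one solves $\tfrac12 (r-t)(P-2\pi t) = \pi t^2$ and checks that the largest root in $[0,r]$ is $t_{g^\Om} = \bigl(\tfrac{1}{r} + \sqrt{\tfrac{2\pi}{Pr}}\bigr)^{-1}$, a routine quadratic computation.

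For the lower bound on $h(\Om)$, I would produce an upper bound $|\Om_{-t}| \le g^\Om(t)$ and take the smallest root of $g^\Om(t) = \pi t^2$. The natural candidate is $g^\Om(t) := |\Om| - (P(\Om) - 2\pi t)t$ — no: rather, one uses that $\Om \subseteq \Om_{-t} + tB_1$... hmm, that inclusion is false in general. Instead, the correct direction comes from writing $|\Om| = |\Om_{-t}| + (\text{area of the annular region})$ and bounding the annular region below by $P(\Om_{-t})\,t \ge (P(\Om) - 2\pi t)\,t$ (using that the outer normals sweep the full circle), giving $|\Om_{-t}| \le |\Om| - (P(\Om)-2\pi t)\,t$. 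Combining with the bound $|\Om| \le $ something in $P,r$ won't directly work since we want the bound $\tfrac1r + \tfrac{\pi}{P-\pi r}$; so here I would instead first prove, via the same annular decomposition applied down to the incenter, that $|\Om| \le \tfrac12 P(\Om)\, r(\Om) - \text{(correction)}$, or more efficiently take $g^\Om(t) = |\Om| - P(\Om)t + \pi t^2$ with $|\Om|$ replaced using $|\Om_{-t}|\le |\Om|$... The clean statement: from $|\Om_{-t}| \le |\Om_{-t}| + P(\Om_{-t}) t + \pi t^2 - \pi t^2 \cdot(\dots)$ — let me just say: I would use $g^\Om(t) = (r(\Om)-t)(P(\Om)-\pi r(\Om)) + \pi (r(\Om)-t)^2$-type expression derived from the stadium being extremal, solve $g^\Om(t) = \pi t^2$ for the smallest root, and verify it equals $\bigl(\tfrac1r + \tfrac{\pi}{P-\pi r}\bigr)^{-1}$.

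For sharpness: in the lower-bound case, equality in Lemma \ref{lem:main} forces $|\Om_{-t}| = g^\Om(t)$ along the relevant range, which forces equality in \eqref{perimeter} and in the area inequality used; tracking these back, the only sets are stadiums, for which $\Om_{-t}$ is again a stadium and the computation is explicit. In the upper-bound case, equality propagates back through $|\Om| = \tfrac12 r P$ and $P(\Om_{-t}) = P(\Om) - 2\pi t$, whose equality cases (known from the cited references \cite{inequalities_convex}) are precisely the sets homothetic to their form bodies, i.e.\ polygons circumscribed about their incircle and their limits.

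\textbf{Main obstacle.} The delicate point is identifying the \emph{exact} functions $g^\Om$ that are simultaneously (a) valid bounds on $|\Om_{-t}|$ for every convex $\Om$ with the given $P, r$, and (b) tight enough that the root of $g^\Om(t)=\pi t^2$ reproduces the claimed expressions — and then rigorously pinning down the equality cases. I expect the lower-bound direction (the bound $\tfrac1r + \tfrac{\pi}{P-\pi r}$, with stadium extremal) to be the harder half, since it requires the sharp upper estimate on $|\Om_{-t}|$ in terms of $P$ and $r$ simultaneously, rather than just the easy consequences of the Steiner formula.
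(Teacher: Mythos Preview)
Your approach via Lemma \ref{lem:main} is far more laborious than what the paper does, and as written it contains genuine errors. The paper's proof is two lines: it simply combines the already-established inequalities \eqref{eq:hra},
\[
\frac{1}{r(\Omega)}+\frac{\pi r(\Omega)}{|\Omega|}\leq h(\Omega) \leq \frac{1}{r(\Omega)}+\sqrt{\frac{\pi}{|\Omega|}},
\]
with the classical area bounds $\tfrac{1}{2}P(\Omega)r(\Omega)\le |\Omega|\le r(\Omega)(P(\Omega)-\pi r(\Omega))$. Substituting the lower area bound into the upper estimate for $h$ gives the upper bound in \eqref{eq:hrP}; substituting the upper area bound into the lower estimate for $h$ gives the lower bound. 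Sharpness is immediate because form-body-homothetic sets (resp.\ stadiums) realize equality simultaneously in both ingredients on each side. No analysis of $|\Om_{-t}|$ is needed here; that work was already absorbed into \eqref{eq:hra}.

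Your direct Lemma \ref{lem:main} route breaks down concretely at the upper bound. You propose $g^\Om(t)=\tfrac{1}{2}(r-t)(P-2\pi t)$ as a \emph{lower} bound for $|\Om_{-t}|$, citing $|\Om_{-t}|\ge \tfrac12 r(\Om_{-t})P(\Om_{-t})$ together with \eqref{perimeter}. But \eqref{perimeter} is the inequality $P(\Om_{-t})\le P(\Om)-2\pi t$, which points the wrong way: you cannot replace $P(\Om_{-t})$ by $P-2\pi t$ inside a lower bound. Moreover, even granting your $g^\Om$, the equation $\tfrac12(r-t)(P-2\pi t)=\pi t^2$ is \emph{linear} in $t$ (the $\pi t^2$ terms cancel) and yields the unique solution $t=\dfrac{rP}{2\pi r+P}$, hence $1/t=\dfrac{1}{r}+\dfrac{2\pi}{P}$, not $\dfrac{1}{r}+\sqrt{\dfrac{2\pi}{Pr}}$; so the ``routine quadratic computation'' you announce does not produce the claimed bound. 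For the lower bound you never settle on a definite $g^\Om$, and the inclusions you try ($\Om\subseteq \Om_{-t}+tB_1$, etc.) are, as you note yourself, false in general.

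The moral: there is no obstacle at all. Once \eqref{eq:hra} is in hand (and it is, from \cite{ftJMAA}), the proposition follows by eliminating $|\Om|$ via the two elementary $P$--$r$--$A$ inequalities, with the equality cases lining up automatically.
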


\begin{proof}
We combine the following classical convex geometric inequalities that can be found in \cite{inequalities_convex}
\begin{equation}\label{pra}
    \frac{P(\Omega)r(\Omega)}{2}\leq |\Omega|\leq r(\Omega)(P(\Omega)-\pi r(\Omega)),
\end{equation}
with the estimates \eqref{eq:hra} to obtain the optimal inequalities \eqref{eq:hrP}.
    
 The upper bound in \eqref{eq:hra} is an equality for sets that are homothetic to their form bodies, since both the upper bound in \eqref{eq:hra} and the lower one in \eqref{pra} are sharp for such sets. 

 The lower bound an equality for stadiums, since both the lower bound in \eqref{eq:hra} and the upper one in \eqref{pra} are sharp for those sets.
\end{proof}


\subsubsection{The triplet $(d,h,r)$}
In the following, we will denote by $\mathcal{S}_{r,d}$ the symmetrical spherical slice of inradius $r$ and diameter $d$ and by $\mathcal{N}_{r,d}$ the regular smoothed nonagon with the same inradius and diameter, see Definitions \ref{def:nonagon} and \ref{def:slice}.
\begin{proposition}\label{prop_hdr} Let $\Omega\in\mathcal{K}^2$. Then, it holds
\begin{equation}\label{up_hdr}
    h(\Omega)\leq \frac{1}{r(\Omega)}+ \sqrt{\frac{\pi}{r(\Om)\sqrt{d^2(\Om)-4r^2(\Om)}+r^2(\Om)\left(\pi-2\arccos{\left(\frac{2r(\Om)}{d(\Om)}\right)}\right)}},
\end{equation}
where the equality is achieved if and only if $\Om$ is a symmetric two-cup body. Moreover, we have
\begin{equation}
    \label{hdr_low}
   h(\Om) \ge  \frac{1}{t_{g_1^\Omega}}, 
\end{equation}
where $t_{g_1^\Om}$ is the smallest solution to the equation
\begin{equation}\label{eq1}
    g_1^\Om(t):=\psi(d(\Om)-2t,r(\Om)-t)=\pi t^2
\end{equation}
 on the interval $[0, r(\Om)]$ and the function $\psi$ is defined in \eqref{func_del}. 
 
 Moreover, there exists $D_0$ such that the problem $$\min\{h(\Om)\ |\ \Om\in\K^2_{d,r}\}$$ is solved by the smoothed nonagon $\mathcal{N}_{r,d}$ if $d<rD_0$ and by the slice $\mathcal{S}_{r,d}$ if $d\ge r D_0$. 
\end{proposition}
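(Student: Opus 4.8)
The plan is to prove the three assertions of Proposition \ref{prop_hdr} by combining the sharp area-diameter-inradius inequalities of Theorems \ref{delthm} and the two-cup body lower bound \eqref{del_2cap} with the general mechanism of Lemma \ref{lem:main} and the endpoint inequalities \eqref{eq:hra}.

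For the upper bound \eqref{up_hdr}, I would start from the upper bound in \eqref{eq:hra}, namely $h(\Om)\le \frac{1}{r(\Om)}+\sqrt{\pi/|\Om|}$, and then apply the sharp lower bound \eqref{del_2cap} on $|\Om|$ in terms of $d(\Om)$ and $r(\Om)$. Substituting the right-hand side of \eqref{del_2cap} for $|\Om|$ under the square root yields exactly \eqref{up_hdr}; since the square-root term is decreasing in $|\Om|$, this is a valid upper bound. For the equality case, I would note that the upper bound in \eqref{eq:hra} is attained precisely by sets homothetic to their form bodies, and \eqref{del_2cap} is attained precisely by two-cup bodies; since a two-cup body is homothetic to its form body, both conditions are met exactly when $\Om$ is a symmetric two-cup body, giving the characterization. (One should check there is no other set satisfying both simultaneously, which follows from the fact that the class of form-body-homothetic sets achieving \eqref{del_2cap} reduces to two-cup bodies.)

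For the lower bound \eqref{hdr_low}, the idea is to apply Lemma \ref{lem:main} with the choice $g^\Om(t):=\psi\big(d(\Om)-2t,\,r(\Om)-t\big)$. One must verify the hypotheses: first, $g^\Om$ is continuous on $[0,r(\Om)]$ (immediate from continuity of $\psi$ and of $D^*$); second, the inclusion $\Om_{-t}\subseteq \Om$ together with \eqref{eq:diameter} ($d(\Om_{-t})\le d(\Om)-2t$) and \eqref{inr} ($r(\Om_{-t})=r(\Om)-t$), combined with monotonicity of $\psi$ in the diameter variable and Theorem \ref{delthm} applied to $\Om_{-t}$, gives $|\Om_{-t}|\le \psi(d(\Om_{-t}),r(\Om_{-t}))\le \psi(d(\Om)-2t,r(\Om)-t)=g^\Om(t)$, i.e.\ the ``resp.''-free case of \eqref{kawohl}; third, one checks that $G_\Om\neq\emptyset$, which holds because $g^\Om(r(\Om))=\psi(d(\Om)-2r(\Om),0)=0\le \pi r(\Om)^2$ while $g^\Om(0)=\psi(d(\Om),r(\Om))\ge |\Om|>0=\pi\cdot 0^2$, so by the intermediate value theorem the continuous function $g^\Om(t)-\pi t^2$ has a zero in $[0,r(\Om)]$. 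Lemma \ref{lem:main} then yields $h(\Om)\ge 1/t_{g_1^\Om}$ with $t_{g_1^\Om}$ the smallest such zero, which is \eqref{hdr_low}.

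For the final claim—that the minimum over $\K^2_{d,r}$ is the smoothed nonagon $\mathcal N_{r,d}$ for $d<rD_0$ and the slice $\mathcal S_{r,d}$ for $d\ge rD_0$—I would argue that equality propagates through the chain above precisely for the extremal sets of Theorem \ref{delthm}. Indeed, \eqref{hdr_low} becomes an equality when $|\Om_{-t}|=g^\Om(t)$ for all relevant $t$ and the geometric inequalities \eqref{eq:diameter} are tight; by the rigidity statements in Theorem \ref{delthm}, the set $\Om$ achieving equality must be a regular smoothed nonagon $\mathcal N_{r,d}$ when $d\le r D^*$ and a symmetrical spherical slice $\mathcal S_{r,d}$ when $d>rD^*$. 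One then computes $t_\Om=1/h(\Om)$ explicitly for these two families (the inner parallel sets of $\mathcal N_{r,d}$ and $\mathcal S_{r,d}$ are themselves, up to the obvious rescaling, nonagons and slices, so $|\Om_{-t}|$ is known in closed form and the equation $|\Om_{-t}|=\pi t^2$ can be solved), and compares the two resulting values of $h$ as functions of $d/r$; this comparison defines the threshold $D_0$, which need not coincide with $D^*$. The existence of the minimizer in $\K^2_{d,r}$ is guaranteed by Theorem \ref{th:existence}.

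\textbf{Main obstacle.} The delicate point is the last step: establishing that the minimizer is exactly one of the two named families rather than merely bounded below by $1/t_{g_1^\Om}$, and pinning down $D_0$. This requires showing that the lower bound \eqref{hdr_low} is saturated by (and only by) the extremal sets of the area-diameter-inradius diagram—which in turn needs the rigidity in Theorem \ref{delthm} to transfer through the inner-parallel-set construction—and then a careful, essentially one-variable, analysis comparing $h(\mathcal N_{r,d})$ and $h(\mathcal S_{r,d})$ across the range $2r<d<2\sqrt3\, r$ to locate the crossover $D_0$ and confirm it lies in the relevant interval. The monotonicity of $\psi$ in its arguments and the behavior of the two branches near $d=rD^*$ will need to be checked to ensure the minimum is genuinely attained by these sets on each subinterval.
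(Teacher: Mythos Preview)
Your proposal is essentially correct and follows the paper's route: the upper bound via \eqref{eq:hra} combined with \eqref{del_2cap}, and the lower bound via Lemma~\ref{lem:main} using monotonicity of $\psi$ in the diameter variable together with \eqref{inr} and \eqref{eq:diameter}, is exactly what the paper does (it verifies the monotonicity of $\Psi_r$ by explicit differentiation of each branch).

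For the identification of the minimizer, your substantive idea---that the inner parallel sets of $\mathcal S_{r,d}$ and $\mathcal N_{r,d}$ are again slices and nonagons, so $|\Omega_{-t}|=g_1^\Om(t)$ identically and hence $t_\Om=t_{g_1^\Om}$---is the right one and is what the paper uses. However, your framing via ``rigidity at $t=0$ forces $\Om$ to be a nonagon when $d\le rD^*$'' is misleading: what matters is not the branch of $\psi$ at $t=0$ but the branch on which the root $t_{g_1^\Om}$ lies. Since $(d-2t)/(r-t)$ is increasing in $t$, the function $g_1^\Om$ switches from the nonagon branch to the slice branch at $t^*=(D^*r-d)/(D^*-2)$, and the threshold $D_0\in(2,D^*)$ is characterized by $t^*=1/h(\mathcal N_{1,D_0})$; at that point, as you anticipate, $h(\mathcal N_{r,d})=h(\mathcal S_{r,d})$. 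The paper settles the three cases $t_{g_1^\Om}\lessgtr t^*$ by a direct graphical argument rather than by propagating rigidity, which is cleaner. Finally, drop the ``only by'' in your obstacle paragraph: the minimizer is not unique (the Cheeger set of the slice also attains the bound), and the proposition does not claim uniqueness.
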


\begin{proof} 
To prove \eqref{up_hdr}, one has just to combine the upper bound in \eqref{eq:hra}, which is an equality for sets that are homothetic to their form bodies, and \eqref{del_2cap}, which is an equality for and only for symmetric two-cup bodies, that are also homothetic to their form bodies. 

Let us now prove \eqref{hdr_low}. As in Theorem \ref{delthm} (see \cite{delyon2}), for any constant parameter $r>0$, we consider the function 

\begin{equation*}
   \Psi_r(x) =\begin{cases}
   f_r(x):= \displaystyle{\frac{3\sqrt{3}r}{2}(\sqrt{x^2-3r^2}-r)+\frac{3x^2}{2}\left(\frac{\pi}{3}-\arccos{\left(\frac{\sqrt{3}r}{x}\right)}\right)}, & \text{if} \, \, \,  x\le r D^*\vspace{1mm} \\
    g_r(x):=\displaystyle{r\sqrt{x^2-4r^2}+\frac{x^2}{2}\arcsin{\left(\frac{2r}{x}\right)}}, & \text{if} \, \, \, x\ge r D^*
\end{cases} 
\end{equation*}
defined for $x\ge 2r$. The function $\Psi_r$ is strictly increasing. Indeed, we have for every $x<r D^*$
$$f_r'(x) = \frac{3 \sqrt{3} r x}{2 \sqrt{x^2-3 r^2}} + 3 x \left(\frac{\pi }{3}-\arccos\left(\frac{\sqrt{3} r}{x}\right)\right)-\frac{3 \sqrt{3} r}{2 \sqrt{1-\frac{3 r^2}{x^2}}}= 3 x \left(\frac{\pi }{3}-\arccos\left(\frac{\sqrt{3} r}{x}\right)\right),$$
and for every $x>r D^* $,
$$g_r'(x) = -\frac{r}{\sqrt{1-\frac{4 r^2}{x^2}}}+\frac{r x}{\sqrt{x^2-4 r^2}}+x \arcsin\left(\frac{2 r}{x}\right)=x \arcsin\left(\frac{2 r}{x}\right)>0. $$
Thus, the function $f_r$ is increasing on $[2r,2\sqrt{3}r]$ and is decreasing on $[2\sqrt{3}r,+\infty)$ and the function $g_r$ is increasing on $[2r,+\infty)$. Moreover, we have by Theorem \ref{delthm} that $D^*\leq 2\sqrt{3}$. So, the function $f_r$ is increasing on the sub-interval $[2r,r D^*]$ and, since $f_r(r D^*)=g_r(r D^*)=\Psi_r(D^*)$, the continuous function $\Psi_r$ is increasing on $[2r,+\infty)$. 

Let $t\in [0,r(\Om)]$, by applying the result of Theorem \ref{delthm} on the convex set $\Om_{-t}$, we have
$$|\Om_{-t}|\leq \Psi_{r(\Om_{-t})}(d(\Om_{-t})) = \Psi_{r(\Om)-t}(d(\Om_{-t}))\leq \Psi_{r(\Om)-t}(d(\Om)-2t)=:g_1^\Om(t),$$
where, we use the monotonicity of the function $\Psi_{r(\Om)-t}$ and the estimates \eqref{inr} and \eqref{eq:diameter}.

Now, using Lemma \ref{lem:main}, we have the following bound for the Cheeger constant
\begin{equation}\label{lowerr}
    h(\Om)\ge \frac{1}{t_{g_1^\Om}},
\end{equation}
where $t_{g_1^\Om}$ is the smallest solution to the equation $g_1^\Om(t)=\pi t^2$ on the interval $[0,r(\Om)]$. 

It remains to prove that for every $r>0$ and $d\ge 2r$, there exists a convex set of inradius $r$ and diameter $d$ such that \eqref{lowerr} is an equality. If $d=2r$ then $\Om$ is a ball and thus the equality is trivial. Let us now consider the following two cases: 
\begin{itemize}
    \item If $d\geq  r D^*$, we have, for every $t\in[0,r)$,
    \begin{equation}\label{eq:proof_hdr}
    |(\S_{r,d})_{-t}|=|\S_{r-t,d-2t}|=\Psi_{r-t}(d-2t),    
    \end{equation}
    where the first equality is a consequence of the equality $(\S_{r,d})_{-t}=\S_{r-t,d-2t}$ and the second one is a consequence of \cite[Theorem 2]{delyon2} and of the following estimate
    $$d((\S_{r,d})_{-t})=d-2t>rD^*-2t>rD^*-tD^*=(r-t)D^*=r((\S_{r,d})_{-t})D^*,$$
    where we used $D^*\approx 2,3888>2$ (see \cite[Theorem 2]{delyon2}). Thus, we have by \eqref{eq:proof_hdr} 
    $$ h(\S_{r,d})=\frac{1}{t_{g_1^\Om}},$$
    with 
    $$r(\S_{r,d})= r\ \ \text{and}\ \ d(\S_{r,d})= d.$$
    \item If $d\in (2r,r D^*]$, we consider $t^*:=\frac{D^*r-d}{D^*-2}$, that is the value for which the graphs of the functions $t\longmapsto |(\mathcal{N}_{r,d})_{-t}|$ and $t\longmapsto |(\S_{r,d})_{-t}|$ intersect each other, see Figure \ref{fig:discuss_1}. We note that $(\mathcal{N}_{r,d})_{-t}=\mathcal{N}_{r-t,d-2t}$ for every $t\in [0,t^*]$.

We introduce the quantity  $D_0\in(2,D^*)$ as the (unique) value in the interval $(2,D^*)$ for which the graph of the (decreasing) function $x\longmapsto \frac{D^*-x}{D^*-2}$ intersects the graph of the (increasing\footnote{The function $x\longmapsto \frac{1}{h(\mathcal{N}_{1,x})}$ is increasing because $x\longmapsto \mathcal{N}_{1,x}$ is increasing for the inclusion, meanwhile the Cheeger constant is decreasing with respect to the inclusion.}) one $x\longmapsto \frac{1}{h(\mathcal{N}_{1,x})}$. As shown in Figure \ref{fig:discuss_1}, we have the following cases:\vspace{2mm}
    \begin{itemize}
        \item If $\frac{d}{r}<D_0$, i.e., $t^*>\frac{1}{h(\mathcal{N}_{r,d})}$, we have $h(\mathcal{N}_{r,d})=\frac{1}{t_{g_1^\Om}}$, which means that in this case the smoothed nonagon $\mathcal{N}_{r,d}$ provides the equality in \eqref{hdr_low}.
        
        \item If $\frac{d}{r}>D_0$, i.e., $t^*<\frac{1}{h(\mathcal{N}_{r,d})}$, we have $h(\S_{r,d})=\frac{1}{t_{g_1^\Om}}$, which means that in this case the slice $\mathcal{S}_{r,d}$ provides the equality in \eqref{hdr_low}.
        
        \item If $\frac{d}{r}=D_0$, i.e., $t^*=\frac{1}{h(\mathcal{N}_{r,d})}$, we have $h(\mathcal{N}_{r,d})=h(\S_{r,d})=\frac{1}{t_{g_1^\Om}}$, which means that in this case both the smoothed nonagon $\mathcal{N}_{r,d}$ and the slice $\S_{r,d}$ provide the equality in \eqref{hdr_low}.
    \end{itemize}
    So, the proof is concluded.
\end{itemize}

\begin{center}
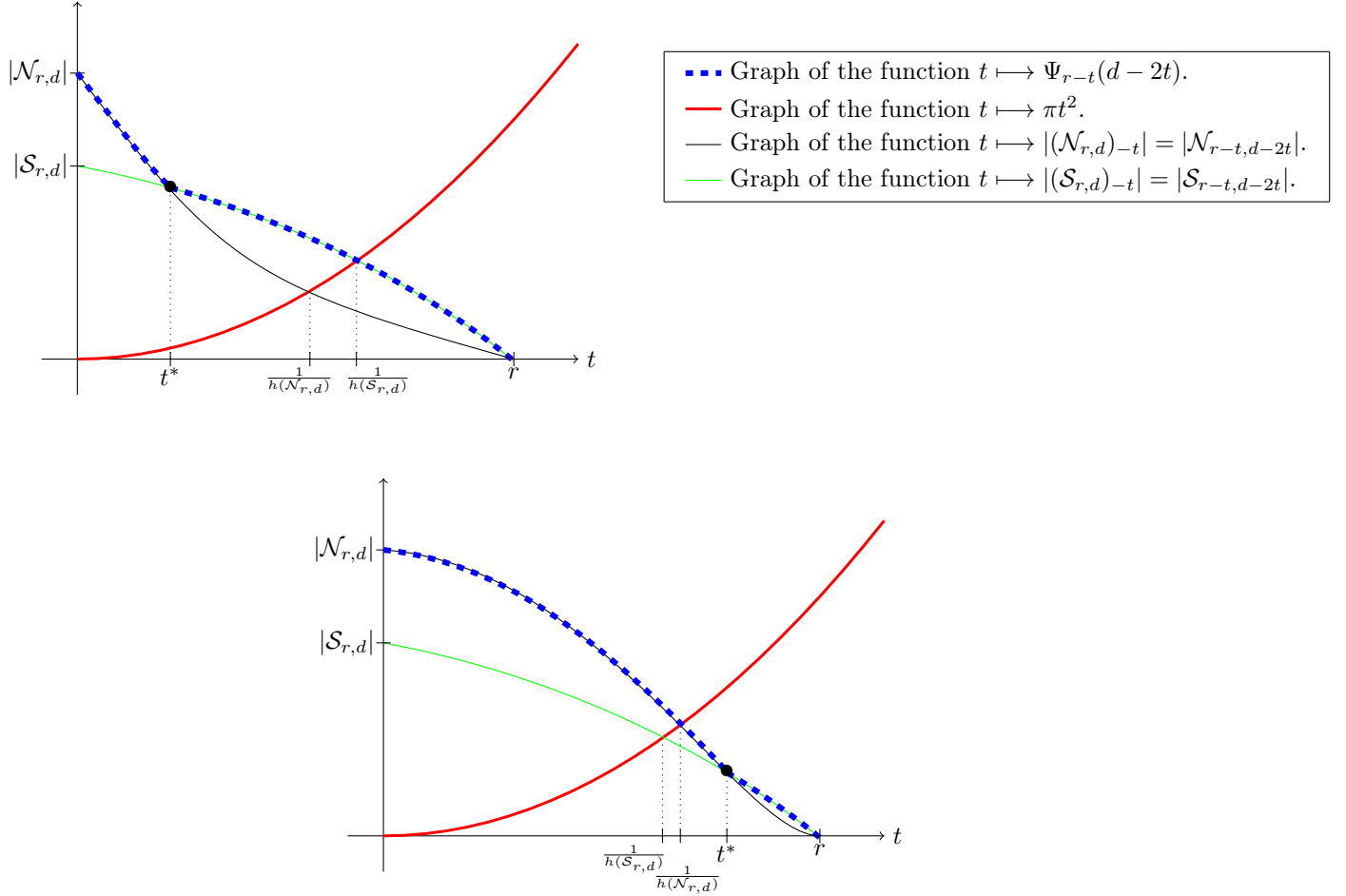
\begin{figure}[h]
\begin{tikzpicture}
\draw[->] (-.5,0) -- (7,0);
\draw (7,0) node[right] {$t$};
\draw [->] (0,-.5) -- (0,5);

\draw [domain=0:7, red,line width=0.4mm] plot(\x,{.09*(\x)^2});

\draw [domain=0:6.1, green] plot(\x,{sqrt(2.7^2-\x)-.03*\x^2});

\draw [domain=1.3:6.1, dashed, blue, line width=0.8mm] plot(\x,{sqrt(2.7^2-\x)-.03*\x^2});

\draw (4.2,0) node[below] {\tiny  $ \frac{1}{h(\mathcal{S}_{r,d})}$};
\draw (3.9,-0.1) -- (3.9,0.1);

\draw (3.1,0) node[below] {\tiny $ \frac{1}{h(\mathcal{N}_{r,d})}$};

\draw (3.25,-0.1) -- (3.25,0.1);
\draw [dotted] (3.25,0) -- (3.25,1);

\draw [dotted] (3.9,0) -- (3.9,1.3);

\draw (1.3,0) node[below] {$t^*$};
\draw (1.3,-0.1) -- (1.3,0.1);

\draw (0,4) node[left] {$|\mathcal{N}_{r,d}|$};
\draw (-0.1,4) -- (0.1,4);

\draw (0,2.7) node[left] {$|\mathcal{S}_{r,d}|$};
\draw (-0.1,2.7) -- (0.1,2.7);

\draw (6.1,0) node[below] {$r$};
\draw (6.1,-0.1) -- (6.1,0.1);

\draw [dotted] (1.3,0) -- (1.3,2.4);

\draw(0,4) .. controls (2,1.3) and (2.5,1)  .. (6.1,0);

\draw[ dashed, blue, line width=0.8mm](0,4) .. controls (1,2.7)  .. (1.3,2.4);

\draw (1.3,2.4) node {\Large $\bullet$};

\draw[ dashed, blue, line width=0.8mm](8.5,4) --  (9,4);
\draw (9,4) node[right] {Graph of the function $t\longmapsto \Psi_{r-t}(d-2t)$.};

\draw[line width=0.4mm, red](8.5,3.5) --  (9,3.5);
\draw(9,3.5) node[right] {Graph of the function $t\longmapsto \pi t^2$.};

\draw (8.5,3) --  (9,3);
\draw(9,3) node[right] {Graph of the function $t\longmapsto |(\mathcal{N}_{r,d})_{-t}|=|\mathcal{N}_{r-t,d-2t}|$.};

\draw [green] (8.5,2.5) --  (9,2.5);
\draw(9,2.5)  node[right] {Graph of the function $t\longmapsto |(\mathcal{S}_{r,d})_{-t}|=|\mathcal{S}_{r-t,d-2t}|$.};

\draw (8.2,2.2) --  (8.2,4.3);
\draw (8.2,2.2) --  (17.5,2.2);
\draw (17.5,2.2) --  (17.5,4.3);
\draw (8.2,4.3) --  (17.5,4.3);
\end{tikzpicture}

\vspace{1cm}
\begin{tikzpicture}
\draw[->] (-.5,0) -- (7,0);
\draw (7,0) node[right] {$t$};
\draw [->] (0,-.5) -- (0,5);

\draw [domain=0:7, red,line width=0.4mm] plot(\x,{.09*(\x)^2});

\draw [domain=0:6.1, green] plot(\x,{sqrt(2.7^2-\x)-.03*\x^2});

\draw [domain=4.8:6.1, dashed, blue, line width=0.8mm] plot(\x,{sqrt(2.7^2-\x)-.03*\x^2});

\draw (3.5,0) node[below] {\tiny  $ \frac{1}{h(\mathcal{S}_{r,d})}$};
\draw (3.9,-0.1) -- (3.9,0.1);

\draw (4.25,-.3) node[below] {\tiny $ \frac{1}{h(\mathcal{N}_{r,d})}$};

\draw (4.15,-0.1) -- (4.15,0.1);
\draw [dotted] (4.15,0) -- (4.15,1.5);

\draw [dotted] (3.9,0) -- (3.9,1.3);

\draw (4.8,0) node[below] {$t^*$};
\draw (4.8,-0.1) -- (4.8,0.1);

\draw (0,4) node[left] {$|\mathcal{N}_{r,d}|$};
\draw (-0.1,4) -- (0.1,4);

\draw (0,2.7) node[left] {$|\mathcal{S}_{r,d}|$};
\draw (-0.1,2.7) -- (0.1,2.7);

\draw (6.1,0) node[below] {$r$};
\draw (6.1,-0.1) -- (6.1,0.1);

\draw [dotted] (4.8,0) -- (4.8,1);

\draw(0,4) .. controls (3,3.8) and (5,0)  .. (6.1,0);

\draw[ dashed, blue, line width=0.8mm](0,4) .. controls (1.6,3.84) and (3,2.84) .. (4.8,.9);

\draw (4.8,.9) node {\Large $\bullet$};
\end{tikzpicture}
\caption{Different cases of equality in inequality \eqref{hdr_low}.}
\label{fig:discuss_1}
\end{figure}
\end{center}

\end{proof}

\begin{remark}
We note that the symmetrical slices and the smoothed nonagons are not the only sets solving the shape optimization problem $\min\{h(\Om)\ |\ \Om\in \K^2_{d,r}\}$. Indeed, if for example we consider a spherical slice $\mathcal{S}$ and denote by $C_\S$ its Cheeger set, we have $h(\S)=h(C_\S)$ and, by the explicit characterization of the Cheeger sets given in \cite[Theorem 1]{kawohl}, we have
 $$r(C_\S)=r\left(\S_{-\frac{1}{h(\S)}}+\frac{1}{h(\S)}B_1\right)=r\left(\S_{-\frac{1}{h(\S)}}\right)+\frac{1}{h(\S)}= r(\S)-\frac{1}{h(\S)}+\frac{1}{h(\S)}=r(S)$$
and 
$$d(C_\S)=d(\S),$$
meanwhile $\S\ne C_\S$, which proves the non-uniqueness of the solution of the minimization problem $$\min\{h(\Om)\ |\ \Om\in \K^2_{d,r}\}.$$ 
\end{remark}

\begin{remark}\label{rk:hdr}
We give the following explicit lower bound. 
In \cite{inequalities_convex}, it is proved that 
\begin{equation*}\label{scotty}
   |\Om|< 2 d(\Om)r(\Om). 
\end{equation*}
By applying the strategy of Lemma \ref{lem:main}, we obtain that 
\begin{equation}\label{eq:remark44}
    h(\Omega) \geq \dfrac{4-\pi}{d(\Om)+2 r(\Om)-\sqrt{(d(\Omega)+2r(\Om))^2-2(4-\pi)d(\Om)r(\Om)}}.
\end{equation}
This estimate is asymptotically achieved by spherical slices with increasing diameter, as shown in Figure 
\ref{fig:remark44}.
\begin{figure}[h]
    \centering
    \includegraphics[scale=.6]{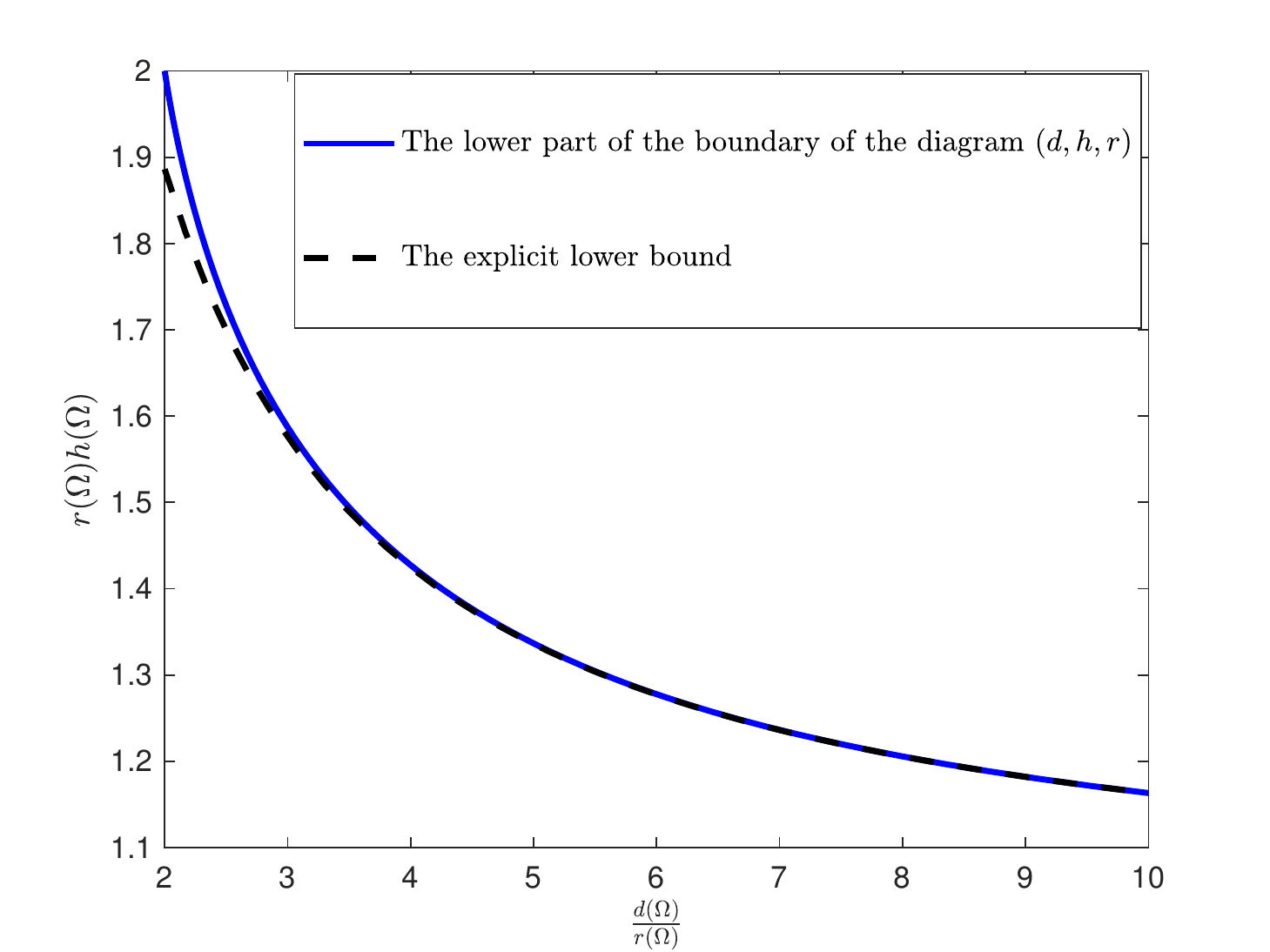}
    \caption{The explicit inequality \eqref{eq:remark44} and the lower part of the boundary of the diagram $(d,h,r)$.}
    \label{fig:remark44}
\end{figure}
\end{remark}

\subsubsection{The triplet $(R,h,r)$ }
\begin{proposition}\label{prop_hRr}
Let $\Omega\in\mathcal{K}^2$. We have 
\begin{equation}\label{hrRlow}
    h(\Om) \ge \frac{1}{t_{g_2^\Omega}},
\end{equation}
where $t_{g_2^\Om}$ is the smallest solution of the equation
\begin{equation}\label{eq2}
    {g_2^\Om}(t):=2\left(\left(r-t\right)\sqrt{(R(\Omega)-t)^2-(r(\Om)-t)^2}+(R(\Om)-t)^2\arcsin{\left(\frac{r(\Om)-t}{R(\Om)-t}\right)}\right)=\pi t^2.
\end{equation}
The equality in \eqref{hrRlow} is achieved if and only if $\Omega$ is a symmetrical spherical slice. Moreover, 
\begin{equation}
    \label{hrRup}
    h(\Omega)\le \frac{1}{r(\Om)}+ \sqrt{\frac{\pi }{2r(\Om)\left(\sqrt{R(\Om)^2-r(\Om)^2}+r(\Om)\arcsin\left(\frac{r(\Om)}{R(\Om)}\right)\right)}},
\end{equation}
where the equality in \eqref{hrRup} is achieved by two-cup bodies.
\end{proposition}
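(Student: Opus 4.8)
The plan is to prove the two bounds separately: the upper bound \eqref{hrRup} by combining two sharp inequalities exactly as in Proposition~\ref{prop_hrP}, and the lower bound \eqref{hrRlow} by feeding the sharp area estimate of Theorem~\ref{inr_circ_thm} into Lemma~\ref{lem:main}. For \eqref{hrRup}, I would insert the sharp lower area bound \eqref{secondhRr}, namely $|\Om|\ge 2r(\Om)\bigl(\sqrt{R(\Om)^2-r(\Om)^2}+r(\Om)\arcsin(r(\Om)/R(\Om))\bigr)$, into the sharp upper estimate $h(\Om)\le \tfrac1{r(\Om)}+\sqrt{\pi/|\Om|}$ from \eqref{eq:hra}; since a larger area makes the right-hand side smaller, this yields \eqref{hrRup}. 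For the equality case I would recall that \eqref{eq:hra} is an equality precisely for sets homothetic to their form body, that \eqref{secondhRr} is an equality precisely for two-cup bodies, and that a two-cup body is homothetic to its form body (as noted in its definition); hence two-cup bodies saturate \eqref{hrRup}.

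For \eqref{hrRlow} I would first record two monotonicity facts about the function $\varphi$ of \eqref{cifreArR}: a direct differentiation gives $\partial_R\varphi(R,r)=4R\arcsin(r/R)>0$ and $\partial_r\varphi(R,r)=4\sqrt{R^2-r^2}\ge 0$ for $R\ge r>0$, so $\varphi$ is strictly increasing in $R$ and non-decreasing in $r$. Then, for $t\in[0,r(\Om)]$, applying \eqref{cifreRr} to the inner parallel set $\Om_{-t}$ and using $r(\Om_{-t})=r(\Om)-t$ from \eqref{inr} together with $R(\Om_{-t})\le R(\Om)-t$ from \eqref{circumradius}, the monotonicity of $\varphi$ gives
\[
|\Om_{-t}|\le \varphi\bigl(R(\Om_{-t}),r(\Om_{-t})\bigr)\le \varphi\bigl(R(\Om)-t,\,r(\Om)-t\bigr)=:g_2^\Om(t),
\]
which is exactly the function appearing in \eqref{eq2}. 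The function $g_2^\Om$ is continuous on $[0,r(\Om)]$, satisfies $g_2^\Om(0)=\varphi(R(\Om),r(\Om))\ge|\Om|>0$ and $g_2^\Om(r(\Om))=\varphi(R(\Om)-r(\Om),0)=0<\pi r(\Om)^2$, so $G_\Om=\{t:g_2^\Om(t)=\pi t^2\}$ is nonempty by the intermediate value theorem; Lemma~\ref{lem:main} then gives $h(\Om)\ge 1/t_{g_2^\Om}$, which is \eqref{hrRlow}.

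It remains to discuss sharpness. For the symmetrical spherical slice $\mathcal{S}$ of inradius $r$ and circumradius $R$, the distributivity of Minkowski erosion over intersection shows that $\mathcal{S}_{-t}$ is again a symmetrical spherical slice, of inradius $r-t$ and circumradius $R-t$; since \eqref{cifreRr} is an equality for slices, $|\mathcal{S}_{-t}|=\varphi(R-t,r-t)=g_2^{\mathcal{S}}(t)$ for every $t$, and \cite[Theorem~1]{kawohl} (the equation $|\mathcal{S}_{-t}|=\pi t^2$ has a unique solution, equal to $1/h(\mathcal{S})$) forces $t_{g_2^{\mathcal{S}}}=1/h(\mathcal{S})$, i.e.\ equality in \eqref{hrRlow}. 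Conversely, since $\tfrac{d}{dt}g_2^\Om(t)=-\partial_R\varphi-\partial_r\varphi<0$ while $\pi t^2$ is strictly increasing, $t_{g_2^\Om}$ is the unique solution of $g_2^\Om(t)=\pi t^2$; if \eqref{hrRlow} is an equality then $t_\Om:=1/h(\Om)=t_{g_2^\Om}$ and, evaluating the displayed chain at $t=t_\Om$, all inequalities there become equalities, so $|\Om_{-t_\Om}|=\varphi\bigl(R(\Om_{-t_\Om}),r(\Om_{-t_\Om})\bigr)$ and the equality case of Theorem~\ref{inr_circ_thm} forces $\Om_{-t_\Om}$ — hence $\Om$ — to be a symmetrical spherical slice. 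I expect the delicate points to be precisely this last equality characterization and the geometric claim that inner parallel sets of spherical slices are again spherical slices with the predicted inradius and circumradius; everything else reduces to the two routine derivatives of $\varphi$ and the verification of the hypotheses of Lemma~\ref{lem:main}.
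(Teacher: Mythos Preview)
Your argument follows the paper's exactly: the upper bound \eqref{hrRup} comes from combining the upper estimate in \eqref{eq:hra} with \eqref{secondhRr}, and the lower bound \eqref{hrRlow} from feeding \eqref{cifreRr}, applied to $\Om_{-t}$, into Lemma~\ref{lem:main} via the monotonicity of $\varphi$ in $R$ together with \eqref{inr} and \eqref{circumradius}. You in fact add details the paper omits (the verification that $G_\Om\neq\emptyset$, and the second partial derivative of $\varphi$, which is not needed since $r(\Om_{-t})=r(\Om)-t$ is an equality), and your ``if'' direction for equality in \eqref{hrRlow} is correct.

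The only genuine gap is in the ``only if'' direction for equality in \eqref{hrRlow}: from equality at $t=t_\Om$ you correctly deduce that $\Om_{-t_\Om}$ is a symmetrical spherical slice, but the implication ``hence $\Om$'' is unjustified---an inner parallel body being a slice does not determine $\Om$. The paper's own proof does not address this direction at all, and in fact the ``only if'' as stated is too strong: exactly as in the remark following Proposition~\ref{prop_hdr}, the Cheeger set $C_{\mathcal S}$ of a slice $\mathcal S$ satisfies $h(C_{\mathcal S})=h(\mathcal S)$, $r(C_{\mathcal S})=r(\mathcal S)$ and, using Remark~\ref{rem:in} together with the fact (which you noted) that $\mathcal S_{-t}$ is again a slice of circumradius $R(\mathcal S)-t$, also $R(C_{\mathcal S})=R(\mathcal S_{-1/h(\mathcal S)})+1/h(\mathcal S)=R(\mathcal S)$. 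Thus $C_{\mathcal S}$ achieves equality in \eqref{hrRlow} without being a slice, so this final step cannot be repaired as written.
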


\begin{proof}
In order to prove \eqref{hrRlow}, we apply the result of Lemma \ref{lem:main}. Let us introduce the function $$\varphi:(R,r)\longmapsto 2\left(r\sqrt{R^2-r^2}+R^2\arcsin{\frac{r}{R}}\right),$$ which is increasing with respect to the first variable, indeed
$$\frac{\partial\varphi}{\partial R}(R,r)=2R \arcsin\left(\frac{r}{R}\right)>0.$$
By applying \eqref{cifreRr} (where the equality holds only  for symmetrical spherical slices), we have, for every $t\in [0,r(\Om)]$,
$$|\Om_{-t}|\leq \varphi(R(\Om_{-t}),r(\Om_{-t}))=\varphi(R(\Om_{-t}),r(\Om)-t)\leq \varphi(R(\Om)-t,r(\Om)-t)=:{g_2^\Om}(t),$$
where the last inequality is a consequence of the monotonicity of the function $R\longmapsto \varphi(R,r)$ and of  the fact that $R(\Om_{-t})\leq R(\Om)-t$ (see Lemma \ref{lem:diameter_inner_set}). Finally, we conclude by applying the result of Lemma \ref{lem:main}.

In order to prove \eqref{hrRup}, we combine the upper bound in \eqref{eq:hra} 
and inequality \eqref{secondhRr}.
As far as the sharpness of \eqref{hrRup} is concerned, we observe that \eqref{eq:hra} is sharp on sets that are homothetic to their form bodies and \eqref{secondhRr} is attained by symmetric two-cup bodies, that are also sets that are homothetic to their form bodies. 
\end{proof}

\begin{remark}\label{rk:hrR}
We have the following explicit lower bound
\begin{equation}\label{eq:remark46}
h(\Om)\ge \frac{4-\pi}{2(R(\Om)+r(\Om))-\sqrt{4(R(\Om)+r(\Om))^2-4(4-\pi)R(\Om)r(\Om)}}.
\end{equation}
The inequality can be obtained by combining
$$\abs{\Om}\le 4R(\Om)r(\Om),$$
see \cite{henk}, and the strategy from Lemma \ref{lem:main}. Moreover, the inequality is asymptotically achieved by spherical slices with increasing circumradius, see Figure \ref{fig:remark46}.
\begin{figure}[h]
    \centering
    \includegraphics[scale=.6]{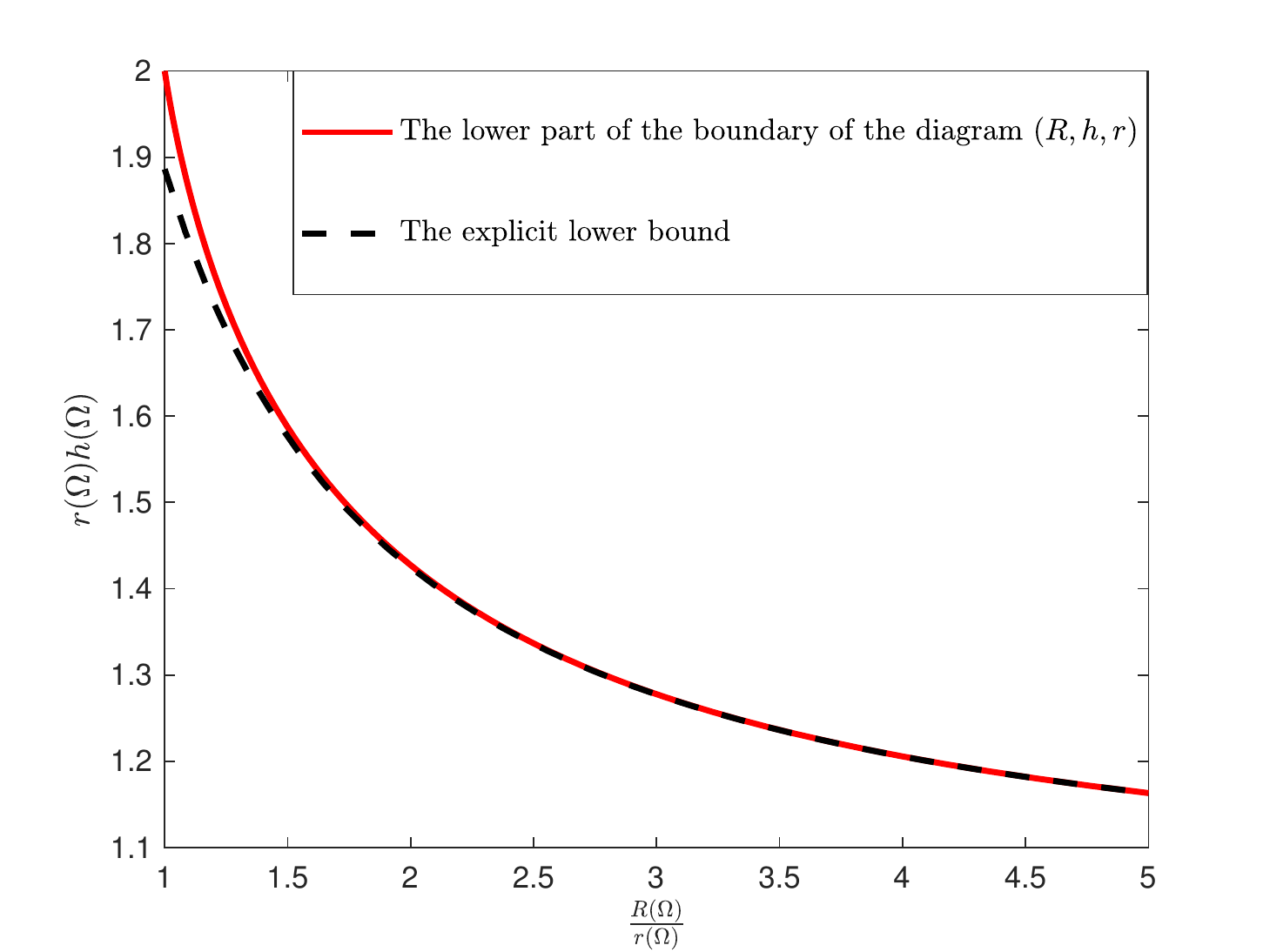}
    \caption{The explicit inequality \eqref{eq:remark46} and the lower part of the boundary of the diagram $(R,h,r)$.}
    \label{fig:remark46}
\end{figure}
\end{remark}
\subsection{Explicit description of the Blaschke-Santal\'o diagrams}
We denote by $\mathcal{D}_1$, $\mathcal{D}_2$ and $\mathcal{D}_3$ the Blaschke Santal\'o diagram respectively corresponding to the triplets $(P,h,r)$, $(d,h,r)$ and $(R,h,r)$. 
We have defined the quantities $t_{g_1^\Om}$ and $t_{g_2^\Om}$ respectively as the smallest solution on $[0,r(\Om)]$ of equations \eqref{eq1} and \eqref{eq2}. We observe that $t_{g_1^\Om}$ depends on $r(\Om)$ and $d(\Om)$, and $t_{g_2^\Om}$ depends on $r(\Om)$ and $R(\Om)$. 

{In the following proposition, we are keeping the inradius $r(\Omega)=1$ and consider different values of the remaining variables: diameter for the diagram $\D_2$ and circumradius for $\D_3$. We then use the notation:} 
$$ t_{g_1^\Om}=t_{g_1}(x) \ \ \text{when $d(\Om)=x$\ \  and}  \quad  t_{g_2^\Om}=t_{g_2}(x)\ \  \text{when $R(\Om)=x$}. $$


\begin{proposition}\label{blaschke}
We obtain the following description of the Blaschke-Santal\'o diagrams

\smaller
\begin{equation}\label{bla1}
    \mathcal{D}_1:=\left\lbrace  \left( P(\Om), h(\Om)\right)\ |\  \Om\in \mathcal{K}^2, \, r(\Om)=1\right\rbrace =\left\lbrace  (x, y)\ |\ x\ge 2\pi\ \ \text{and}\ \  1+ \frac{\pi}{x-\pi}\le y\le 1+\sqrt{\frac{2\pi}{x}}\right\rbrace.
\end{equation}

\begin{equation}\label{bla2}
    \mathcal{D}_2:=\left\lbrace  \left( {R(\Om)}, h(\Om)\right)\ |\ \Om\in \mathcal{K}^2, \, r(\Om)=1\right\rbrace =\left\lbrace  (x, y)\ |\ x\ge 1\ \ \text{and}\ \  \frac{1}{t_{g_2}(x)}\le y\le 1+\sqrt{\frac{\pi}{2\left(\sqrt{x^2-1}+ \arcsin{\frac{1}{x}}\right)}}\right\rbrace.
\end{equation}

\begin{equation}\label{bla3}
    \mathcal{D}_3:=\left\lbrace  \left({d(\Om)}, h(\Om)\right)\ |\  \Om\in \mathcal{K}^2, \, r(\Om)=1\right\rbrace =\left\lbrace  (x, y)\ |\  x\ge 2\  \text{and}\    \frac{1}{t_{g_1}(x)}\le y\le 1+\sqrt{\frac{\pi}{\sqrt{x^2-4}+\left(\pi-2\arccos{\frac{2}{x}}\right)}}\right\rbrace.
\end{equation}
\normalsize

\end{proposition}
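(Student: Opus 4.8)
The plan is to prove Proposition \ref{blaschke} by establishing, for each of the three diagrams $\mathcal{D}_i$, both the \emph{inclusion} of the diagram into the claimed region and the \emph{reverse inclusion} via a continuity/connectedness argument. The ``easy'' direction — that the diagram is contained in the claimed set — is essentially a restatement of the sharp inequalities already proved: for $\mathcal{D}_1$, the two bounds are exactly \eqref{eq:hrP} of Proposition \ref{prop_hrP} after normalizing $r(\Om)=1$; for $\mathcal{D}_2$ (plotted against $R$), they come from \eqref{hrRlow} and \eqref{hrRup} of Proposition \ref{prop_hRr}; and for $\mathcal{D}_3$ (plotted against $d$), they come from \eqref{hdr_low} and \eqref{up_hdr} of Proposition \ref{prop_hdr}. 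The admissible ranges $x\ge 2\pi$, $x\ge 1$, $x\ge 2$ are immediate from the constraints recorded in the list $(1)$–$(3)$ of admissible classes (i.e.\ $P\ge 2\pi r$, $R\ge r$, $d\ge 2r$), specialized to $r=1$.

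For the reverse inclusion, I would fix the value of the horizontal coordinate $x$ (a perimeter, circumradius, or diameter, respectively) and show that the set of attainable Cheeger values is the \emph{entire} closed interval between the lower and upper curves. The strategy has two ingredients. First, the endpoints of the interval are attained: the upper curve is attained by the relevant extremal family (sets homothetic to their form bodies, e.g.\ two-cup bodies) and the lower curve by the relevant extremal family (stadiums for $\mathcal{D}_1$; symmetrical spherical slices, or smoothed nonagons in the small-diameter regime, for $\mathcal{D}_2$ and $\mathcal{D}_3$) — all of this is already contained in Propositions \ref{prop_hrP}, \ref{prop_hdr}, \ref{prop_hRr}. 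Second, I would connect these two extremal sets by a path $t\mapsto \Om_t$ in $\mathcal{K}^2$ along which the horizontal coordinate $x$ stays constant (or can be rescaled to stay constant, using homogeneity) and which is continuous for the Hausdorff distance; since the Cheeger constant is continuous under Hausdorff convergence of non-degenerate convex sets (see \cite[Proposition 3.1]{reverse_cheeger}), the composed map $t\mapsto h(\Om_t)$ is continuous, hence by the intermediate value theorem its image covers the whole interval between the two endpoint values. Thus every point of the claimed region is realized.

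The main obstacle is the construction of this connecting path with the horizontal coordinate held fixed, together with checking that it never degenerates to a segment (so that continuity of $h$ applies). A clean way to do it is to interpolate via inner parallel sets and dilations: starting from a minimizer $\Om_{\min}$ and a maximizer $\Om_{\max}$ for the given value of $x$, one can for instance take convex combinations of support functions, or pass through a common ``intermediate'' shape such as a suitable stadium or spherical slice, rescaling at each stage so that the constrained functional takes the prescribed value $x$; one must then verify that the other two functionals vary continuously and that the remaining (third) homogeneous quantity used for normalization stays bounded away from degeneracy. For $\mathcal{D}_2$ and $\mathcal{D}_3$ an additional subtlety is that in the small-$x$ regime the lower boundary is attained by smoothed nonagons rather than slices, so the description of the lower curve is implicitly piecewise (through the definition of $t_{g_1}(x)$ and $t_{g_2}(x)$ as the \emph{smallest} roots of \eqref{eq1}, \eqref{eq2}); one should check that this makes no difference to the argument, since those functions are continuous in $x$ and the path construction is insensitive to which extremal family realizes the lower endpoint.

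Finally, I would record that these three diagrams are therefore ``full'' in the sense that they are described by exactly two inequalities each (an upper and a lower bound on $h$ in terms of the chosen pair of geometric quantities), with no gaps — which is the sense in which Theorem \ref{th2} asserts the diagrams are completely solved. The verification that the claimed regions are nonempty for every admissible $x$, and that the upper curve lies above the lower curve throughout (so the interval is genuinely non-trivial, except possibly at the left endpoint $x=2\pi$, $1$, $2$ where both curves meet at the ball), is a short monotonicity check on the explicit functions appearing in \eqref{bla1}–\eqref{bla3} and can be done directly.
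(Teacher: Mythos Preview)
Your overall strategy matches the paper's: the inclusion $\mathcal{D}_i\subset\{\text{claimed region}\}$ is indeed immediate from Propositions \ref{prop_hrP}, \ref{prop_hdr}, \ref{prop_hRr}, and the reverse inclusion is obtained by showing the diagrams are \emph{vertically convex}, i.e.\ that for each fixed horizontal coordinate one can connect the upper and lower extremal sets by a Hausdorff-continuous path of admissible shapes and invoke the intermediate value theorem for $h$. However, the part you flag as ``the main obstacle'' is left essentially unaddressed, and one of your claims about it is incorrect.

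The concrete constructions the paper uses are not the vague ``inner parallel sets and dilations'' or ``rescaling'' you suggest (note that rescaling changes $r$, which is fixed at $1$, so that route does not work cleanly). For $\mathcal{D}_1$ the paper takes the Minkowski combination $K_t=tR_{l_0}+(1-t)C_{k_0}$; the perimeter is preserved because $P$ is \emph{linear} under Minkowski sum, and $r(K_t)=1$ is forced by sandwiching $B_1\subset K_t\subset Q$ for a suitable rectangle $Q$. For $\mathcal{D}_2$ the key observation is that the two-cup body $C_x$ is \emph{contained} in the spherical slice $S_x$ (same center, collinear diameters), so $C_x\subset (1-t)C_x+tS_x\subset S_x$, and monotonicity of $r$ and $R$ under inclusion pins both at their common values. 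These are the specific facts that make the paths stay vertical; your ``convex combinations of support functions'' is the right object, but you have not explained why the constraints are preserved.

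More seriously, your remark that for $\mathcal{D}_3$ the smoothed-nonagon regime ``makes no difference'' because ``the path construction is insensitive to which extremal family realizes the lower endpoint'' is wrong. The paper explicitly observes that when $x_0<D_0$ the Minkowski combination of a two-cup body and a smoothed nonagon no longer yields a vertical path, and replaces it by an ad hoc construction from \cite[Section 1.2]{delyon2}: one fixes a diameter $[A,B]$ of the nonagon, shrinks it continuously to the convex hull of $A$, $B$ and the inscribed unit disk, and then slides $A,B$ to diametrically opposite positions to reach the two-cup body --- all while keeping $r=1$ and $d=x_0$. This genuinely different construction is needed, and your proposal does not supply it.
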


\begin{proof}
\begin{itemize}
\item \underline{The diagram $\D_1$ of the triplet $(P,h,r)$:}\\ \vspace{2mm}
   { Let us prove that \eqref{bla1} holds. The bounds proved in Proposition \ref{prop_hrP} ensure that 
$$D_1\subseteq \left\lbrace  (x, y)\ |\ x\ge 2\pi\ \ \text{and}\ \ 1+ \frac{\pi}{x-\pi}\le y\le 1+\sqrt{\frac{2\pi}{x}}\right\rbrace=:\mathcal{P}.$$
It remains to prove the converse inclusion. First, we observe that the boundary of $\mathcal{P}$ is included in $\mathcal{D}_1$. Indeed, we can explicitly construct a family of convex sets which fill the lower part of the boundary of $\mathcal{P}$: let us consider the family of stadiums $\Set{R_l}_{l\ge 0}$ obtained as the convex hull of two balls of radius $1$ and centered in $(0,-l/2)$ and $(0,l/2)$. We have
$$P(R_l)=2\pi+ 2l, \quad r(R_l)=1\ \ \text{and}\  \quad h(R_l)=\frac{P(R_l)}{\abs{R_l}}=\frac{2\pi+ 2l}{\pi+ 2l}=1+ \frac{\pi}{P(R_l)-\pi}.$$
Let us now construct a family of convex sets filling the upper part of the boundary of $\mathcal{P}$. We consider the family of two-cup bodies $\Set{C_k}_{k\ge1}$ obtained as the convex hull of the ball $B_1$ centered at the origin and the points $(-k,0)$ and $(k,0)$. By \cite{cifre_salinas}, we have
$$P(C_k)= 2\left(\sqrt{4k^2-4}+2 \arcsin{\frac{1}{k}}\right), \quad r(C_k)=1 \ \ \text{and}\ \  \abs{C_k}=\left(\sqrt{4k^2-4}+2 \arcsin{\frac{1}{k}}\right)$$
and, as it is shown in \cite{ftJMAA}, 
$$h(C_k)=\frac{1}{r(\C_k)}+\sqrt{\frac{\pi}{\abs{C_k}}}=1+ \sqrt{\frac{2\pi}{P(C_k)}}.$$}

{In order to conclude, we show that the diagram $\D_1$ is vertically convex, i.e., that we can always construct continuous and vertical paths included in the diagram and connecting the upper and the lower parts of the boundary of $\D_1$, covering, in this way, all the area between the two curves. }

{Let $x_0\ge 2\pi$.  There exist $R_{l_0}\in \Set{R_l} $ and $C_{k_0}\in \Set{C_k}$ such that $P(R_{l_0})=P(C_{k_0})=x_0$. 
Let us define, via the Minkowski sum, the set $$K_t=t R_{l_0}+ (1-t) C_{k_0}.$$}
{By the linearity of the perimeter with respect to the Minkowski sum, we have 
$$\forall t\in[0,1],\ \ \ P(K_t)= P(t R_{l_0}+ (1-t) C_{k_0}) = t P(R_{l_0}) + (1-t) P(C_{k_0}) = t x_0 + (1-t)x_0 = x_0.$$ 
As for the inradius, let us consider the unit ball $B_1$ centred at the origin and the rectangle $Q$ of vertices $(-M,-1)$, $(-M,1)$, $(M,1)$ and $(M,-1)$, where $M>0$ is sufficiently large such that both $R_{l_0}$ and $C_{k_0}$ are contained in $Q$. We then have for any $t\in[0,1]$,
$$B_1\subset t R_{l_0}+ (1-t) C_{k_0} \subset Q,$$
which yields by the monotonicity of the inradius with respect to inclusions to
$$\forall t\in[0,1],\ \ \ 1 = r(B_1) \leq r(t R_{l_0}+ (1-t) C_{k_0}) \leq r(Q) = 1.$$
Thus, $$\forall t\in[0,1],\ \ \ r(K_t) = 1.$$
On the other hand, by the continuity of the Cheeger constant with respect to the Hausdorff distance, we have by the intermediate value theorem
$$[h(R_{t_0}),h(C_{k_0})] \subset \{h(K_t)\ |\ t\in[0,1]\}.$$
Thus, since $\{h(K_t)\ |\ t\in[0,1]\}\subset [h(R_{t_0}),h(C_{k_0})]$ (because $R_{t_0}$ and $C_{k_0}$ respectively 
 correspond to points laying on the lower and the upper parts of the boundary of the diagram $\D_1$), we have the equality $$[h(R_{t_0}),h(C_{k_0})] = \{h(K_t)\ |\ t\in[0,1]\}.$$
 This proves that the diagram $\D_1$ is vertically convex and that $\D_1 = \mathcal{P}$.
}
\vspace{2mm}

\item \underline{The diagram $\D_2$ of the triplet $(R,h,r)$:} \vspace{2mm}

{The proof of \eqref{bla2} follows the same scheme. Indeed, one can prove that the diagram is vertically convex by considering vertical paths constructed via the Minkowski sums of the extremal sets (those corresponding to points laying on the upper and lower parts of the boundary of the diagram). In the present case, by Proposition \ref{prop_hRr}, the upper boundary is filled by points corresponding to two-cup bodies $(C_x)_{x\ge 1}$ such that $R(C_x)=x$ and $r(C_x) = 1$, for all $x\ge 1$. Meanwhile, the lower boundary is filled by points corresponding to spherical slices $(S_x)_{x\ge 1}$ such that $R(S_x)=x$ and $r(S_x) = 1$, for all $x\ge 1$. If one assumes (without loss of generality) that for all $x\ge 1$, both $C_x$ and $S_x$ are centered at the origin and that their diameters are colinear, then, it is clear that $C_x\subset S_x$. Thus, it is straightforward that $C_x\subset (1-t)C_x + t S_x \subset S_x$, for all $t\in[0,1]$. Therefore, we have by the monotonicity of the circumradius and the inradius with respect to inclusions that
$$\forall t\in[0,1],\ \ \ 1= r(C_x)\leq r((1-t)C_x + t S_x) \leq r(S_x) = 1$$
and 
$$\forall t\in[0,1],\ \ \ x= R(C_x)\leq R((1-t)C_x + t S_x) \leq R(S_x) = x.$$
Thus, 
$$\forall t\in[0,1],\ \ \ r((1-t)C_x + t S_x)=1\ \ \text{and}\ \  R((1-t)C_x + t S_x) = x,$$
which allows to conclude the vertical convexity of the diagram as in the previous case.} \vspace{2mm}



\item \underline{The diagram $\D_3$ of the triplet $(d,h,r)$:} \vspace{2mm}

{As for the proof of \eqref{bla3}, let $x_0\ge 2$ and $C_{x_0}$ be a two-cup body such that $r(C_{x_0})=1$ and $d(C_{x_0})=x_0$ (corresponding to a point on the upper boundary of $\D_3$, see Proposition \ref{prop_hdr}) and $L_{x_0}$ be an extremal shape corresponding to a point on the lower boundary such that $r(C_{x_0})=1$ and $d(C_{x_0})=x_0$. In this case, we have to be more careful as we should distinguish two cases: 
\begin{itemize}
    \item If $x_0 \ge D_0$, then by Proposition \ref{prop_hdr}, $L_{x_0}$ is a symmetrical spherical slice whose diameter can be assumed to be colinear to the diameter of $C_{x_0}$. In this case, we can conclude exactly as in the previous case by considering the convex Minkowski combination of $C_{x_0}$ and $L_{x_0}$.
    \item If $x_0\in [2,D_0)$, then $L_{x_0}$ can be taken as a smoothed nonagon of inradius $1$ and diameter $x_0$ (see Proposition \ref{prop_hdr}). In this case, the Minkowski sum does no longer allow to construct a vertical line included in the diagram and joining the upper and the lower parts of its boundary. Therefore, to prove the vertical convexity, we use the following  construction introduced in \cite[Section 1.2]{delyon2}:  Starting from the smoothed nonagon $L_{x_0}$, we fix one of its diameter, say $[A, B]$ and we shrink continuously $L_{x_0}$ to the set $L_{AB}$ defined as the convex hull of the points $A$, $B$ and the disk of radius $1$
    contained in $L_{x_0}$. Then, we continuously move the points $A$ and $B$  to the points $A_0$
 and  $B_0$ at distance $x_0$,
oppositely located with respect to the center of the disk (in the sense that the center is the middle of
$[A_0,B_0]$) by keeping the convex hull with the disk through the perturbation process, obtaining the two-cup body $C_{x_0}$ as a final shape. By doing so, we constructed a continuous family of convex shapes (of inradius $1$ and diameter $x_0$)  connecting $L_{x_0}$ and $C_{x_0}$, yielding a vertical and continuous line connecting the upper and the lower parts of the boundary of the Blaschke--Santal\'o diagram $\D_3$. 
\end{itemize}}


\end{itemize}

\end{proof}

\begin{remark}
As observed in this section, proving  sharp bounds corresponding to the boundary of the Blaschke--Santal\'o diagram is not equivalent to completely characterizing it. Indeed, once we managed to identify the boundary, it remains to show that the diagram is simply connected, which can be a difficult task, see for example \cite[Conjecture 5]{AH11}, \cite[Open problem 2]{LZ} and \cite[Problem 3]{vdBBP}.  We refer to \cite{FL21} for a generic method of proof of the simple connectedness of a Blaschke--Santal\'o diagram. 
\end{remark}

\section{The remaining diagrams}\label{partial}
For the remaining triplets of shape functionals, we have proved the existence of a maximum and minimum to the relative shape optimization problems (see Theorem \ref{th:existence}). Moreover, we are able to identify parts of the boundaries of the corresponding Blaschke--Santal\'o diagrams. In the following, we state and prove the results that we have obtained. 
\subsection{The triplet $(\omega,h,d)$}
\begin{proposition}\label{prop_hdw}
Let $\Omega\in\mathcal{K}^2$. We have 
\begin{equation}\label{hdwlow}
h(\Om)\ge \frac{1}{t_{g_3^\Om}},
\end{equation}
where $t_{g_3^\Om}$ is the smallest solution to
$${g_3^\Om}(t):=\frac{\omega(\Om)-2t}{2}\sqrt{(d(\Om)-2t)^2-(\omega(\Om)-2t)^2}+ \frac{(d(\Om)-2t)^2}{2}\arcsin\left(\frac{\omega(\Om)-2t}{d(\Om)-2t}\right)=\pi t^2.$$
The equality in \eqref{hdwlow} is achieved by symmetrical spherical slices. 
Moreover,
\begin{itemize}
    \item if $\omega(\Om)\le \frac{\sqrt{3}}{2}d(\Om)$, we have
\begin{equation}\label{hdwup}
   h(\Om)\le  h(T_I),
\end{equation}
where $T_I$ is the subequilateral triangle such that $\omega(T_I)=\omega(\Om)$ and $d(T_I)=d(\Om)$. The equality in \eqref{hdwup} is achieved by the isosceles triangle $T_I$; 
\item and if $\frac{\sqrt{3}}{2}d(\Om)\leq\omega(\Om) \leq d(\Om)$, we have
\begin{equation}\label{hdwup_1}
h(\Om)\le \frac{\sqrt{3}}{\sqrt{3}\omega(\Om)-d(\Om)}+\sqrt{\frac{2\pi}{\pi \omega(\Om)^2-\sqrt{3}d(\Om)^2+6\omega(\Om)(\tan\left(\arccos(\frac{\omega(\Om)}{d(\Om)})\right)-\arccos(\frac{\omega(\Om)}{d(\Om)}))}}. 
\end{equation}
The equality in \eqref{hdwup_1} is achieved by equilateral triangles. 
\end{itemize}

\end{proposition}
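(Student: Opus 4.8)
The plan is to follow the trichotomy of the statement: the lower bound \eqref{hdwlow} via Lemma~\ref{lem:main}, and each of the two upper bounds via the Cheeger sandwich inequality \eqref{eq:hra} combined with sharp lower bounds for $r(\Om)$ and $|\Om|$ in terms of $\omega(\Om)$ and $d(\Om)$.

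\emph{Lower bound.} First I would recognise $g_3^\Om$: writing $W=\omega(\Om)-2t$ and $D=d(\Om)-2t$, the quantity $g_3^\Om(t)=\tfrac W2\sqrt{D^2-W^2}+\tfrac{D^2}2\arcsin(W/D)$ is exactly the area of the symmetrical spherical slice $\mathcal{S}_{D,W}$; since $(\mathcal{S}_{d,\omega})_{-t}=\mathcal{S}_{d-2t,\omega-2t}$ this means $g_3^\Om(t)=\bigl|(\mathcal{S}_{d(\Om),\omega(\Om)})_{-t}\bigr|$. The key input is the sharp upper bound of the $(A,\omega,d)$ Blaschke--Santal\'o diagram: for every $K\in\K^2$ one has $|K|\le\tfrac{\omega(K)}2\sqrt{d(K)^2-\omega(K)^2}+\tfrac{d(K)^2}2\arcsin(\omega(K)/d(K))$, with equality only for symmetrical spherical slices. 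Applying this to $K=\Om_{-t}$, using $\omega(\Om_{-t})\le\omega(\Om)-2t$ and $d(\Om_{-t})\le d(\Om)-2t$ from Lemma~\ref{lem:diameter_inner_set}, and the fact that $(D,W)\mapsto\tfrac W2\sqrt{D^2-W^2}+\tfrac{D^2}2\arcsin(W/D)$ is nondecreasing in each variable, one gets $|\Om_{-t}|\le g_3^\Om(t)$ on $[0,r(\Om)]$. I would then check $G_\Om\ne\emptyset$ (here the constraint \eqref{cifre_wrd_eq_1} is used to force $g_3^\Om(r(\Om))\le\pi r(\Om)^2$, while $g_3^\Om(t)\ge|\Om_{-t}|>\pi t^2$ for $t<1/h(\Om)$), and apply Lemma~\ref{lem:main} to obtain \eqref{hdwlow}. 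Sharpness is then automatic: for $\Om=\mathcal{S}_{d,\omega}$ the estimate $|\Om_{-t}|\le g_3^\Om(t)$ is an identity, so by Kawohl's characterisation the smallest root of $g_3^\Om(t)=\pi t^2$ equals $1/h(\mathcal{S}_{d,\omega})$.

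\emph{Upper bound, case $\omega(\Om)\le\tfrac{\sqrt3}2 d(\Om)$.} I would start from \eqref{eq:hra}, $h(\Om)\le\tfrac1{r(\Om)}+\sqrt{\pi/|\Om|}$, which is an equality for sets homothetic to their form body, in particular for the triangle $T_I$ (a triangle's incircle meets all three sides). It then suffices to show $r(\Om)\ge r(T_I)$ and $|\Om|\ge|T_I|=\tfrac12\omega(\Om)d(\Om)$. For the area: fix a diametral chord $[a,b]$ of $\Om$; the width of $\Om$ orthogonal to $ab$ is at least $\omega(\Om)$, so the convex hull of $\{a,b\}$ together with the point(s) of $\Om$ farthest from the line $ab$ on each of its sides is a triangle or quadrilateral contained in $\Om$ of area at least $\tfrac12 d(\Om)\omega(\Om)$, and every triangle whose longest side equals its diameter and whose smallest altitude equals its width has area exactly $\tfrac12\omega d$. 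For the inradius: since $2r\le\omega\le3r$ put $\rho=r(\Om)/\omega(\Om)\in[\tfrac13,\tfrac12]$, rewrite \eqref{cifre_wrd_eq_1} as $\psi(\rho)\le 4\omega(\Om)^2/d(\Om)^2$ with $\psi(\rho):=(1-2\rho)^2(4\rho-1)/\rho^4$, and note that $\psi$ is strictly decreasing on $[\tfrac13,\tfrac12]$ with $\psi(\tfrac13)=3\ge 4\omega^2/d^2$ (this is exactly where $\omega\le\tfrac{\sqrt3}2d$ is used); hence $\rho\ge\psi^{-1}(4\omega^2/d^2)$, and since $T_I$ saturates \eqref{cifre_wrd_eq_1} this lower value is $r(T_I)/\omega(T_I)$, giving $r(\Om)\ge r(T_I)$. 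Chaining, $h(\Om)\le\tfrac1{r(T_I)}+\sqrt{\pi/|T_I|}=h(T_I)$, which is \eqref{hdwup}.

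\emph{Upper bound, case $\tfrac{\sqrt3}2 d(\Om)\le\omega(\Om)\le d(\Om)$, and the main obstacle.} The scheme is the same with \eqref{cifre_wrd_eq_2} in place of \eqref{cifre_wrd_eq_1}: it gives $r(\Om)\ge\omega(\Om)-\tfrac{\sqrt3}3 d(\Om)>0$, and $\tfrac1{\omega-\frac{\sqrt3}3 d}=\tfrac{\sqrt3}{\sqrt3\omega-d}$ is the first term of \eqref{hdwup_1}; for the area one invokes the sharp lower bound of the $(A,\omega,d)$ diagram in this range, attained by the Yamanouti set $Y$ of width $\omega(\Om)$ and diameter $d(\Om)$, whose area is the remaining quantity in \eqref{hdwup_1}. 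Then \eqref{eq:hra} yields \eqref{hdwup_1}, and since \eqref{eq:hra} is an identity only for sets homothetic to their form body, among the Yamanouti sets (which saturate both \eqref{cifre_wrd_eq_2} and the $(A,\omega,d)$ lower bound) only the degenerate one, the equilateral triangle, attains it. The main obstacles are the two extremal facts not established in the preliminaries above — the sharp $(A,\omega,d)$ upper bound (spherical slices) used in the lower estimate and its sharp lower bound (Yamanouti sets) used here — together with the (elementary but not automatic) monotonicity analysis that converts the cubic inequality \eqref{cifre_wrd_eq_1} into the clean comparison $r(\Om)\ge r(T_I)$ and the verification that the hypotheses of Lemma~\ref{lem:main} hold, i.e.\ that $t_{g_3^\Om}$ is well defined in $[0,r(\Om)]$.
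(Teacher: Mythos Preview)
Your proposal is correct and follows essentially the same route as the paper: the lower bound via the Kubota inequality $|\Om|\le\frac{\omega}{2}\sqrt{d^2-\omega^2}+\frac{d^2}{2}\arcsin(\omega/d)$, monotonicity of this function in each variable, Lemma~\ref{lem:diameter_inner_set}, and Lemma~\ref{lem:main}; the upper bounds via \eqref{eq:hra} combined with the sharp lower bounds for $r$ and $|\cdot|$ in terms of $(\omega,d)$ (subequilateral triangles in the first range, Yamanouti sets in the second). Your explicit monotonicity analysis of $\psi(\rho)=(1-2\rho)^2(4\rho-1)/\rho^4$ and the identification of $\psi(1/3)=3$ as the place where the threshold $\omega\le\frac{\sqrt3}{2}d$ enters are details the paper leaves implicit.

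One small caveat: your check that $G_\Om\ne\emptyset$ via \eqref{cifre_wrd_eq_1} does not go through cleanly --- bounding the slice area by $(\omega-2r)(d-2r)$ and feeding in \eqref{cifre_wrd_eq_1} only yields $g_3^\Om(r(\Om))\le\pi r(\Om)^2$ when $\omega(\Om)/r(\Om)$ is bounded away from $3$. The clean fix is to note that $g_3^\Om$ is naturally defined on $[0,\omega(\Om)/2]\supset[0,r(\Om)]$, with $g_3^\Om(\omega(\Om)/2)=0$, so a root of $g_3^\Om(t)=\pi t^2$ certainly exists on the larger interval; since $g_3^\Om(t)\ge|\Om_{-t}|>\pi t^2$ for $t<1/h(\Om)$, the smallest such root is still $\ge 1/h(\Om)$, which is all that is needed. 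The paper itself does not verify this hypothesis of Lemma~\ref{lem:main} explicitly.
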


\begin{proof}
\begin{itemize}
    \item Let us start by proving the lower bound \eqref{hdwlow}, by using the strategy given in Lemma \ref{lem:main}. 
For every $\Om\in\mathcal{K}^2$, it holds
\begin{equation*}
    \label{Adw_up} 
    \abs{\Om}\le \frac{\omega(\Om)}{2}\sqrt{d^2(\Om)-\omega^2(\Om)}+ \frac{d^2(\Om)}{2}\arcsin\left(\frac{\omega(\Om)}{d(\Om)}\right),
\end{equation*}
see \cite{kubota,inequalities_convex}, with equality if and only if $\Om$ is a symmetrical spherical slice. If we denote  
$$f(d,w):=\frac{w}{2}\sqrt{d^2-w^2}+\frac{d^2}{2}\arcsin\left(\frac{w}{d}\right),$$
we have
$$\frac{\partial f}{\partial d}(d,w)= d\arcsin\left(\frac{w}{d}\right)>0\ \ \ \text{and}\ \ \ \frac{\partial f}{\partial w}(d,w)=\sqrt{d^2-w^2}>0.$$
Thus, 
$$\abs{\Om_{-t}}\le f(d(\Om_{-t}), \omega(\Om_{-t}))\le f(d(\Om)-2t, \omega(\Om)-2t).$$
Therefore, by using Lemma \ref{lem:main}, we have
$$h(\Om)\ge \frac{1}{t_{g_3^\Om}},$$
where $t_{g_3^\Om}$ is the smallest solution of
$${g_3^\Om}(t):=f(d(\Om)-2t, \omega(\Om)-2t)=\pi t^2.$$

\item In order to prove the upper bound \eqref{hdwup}, we consider the following minimization problem for the area in the class of convex sets with given diameter and width, studied in  \cite{inequalities_convex} and \cite{sholander}: 
\begin{enumerate}[label=(\roman*)]
\item if $2\omega(\Om)\le \sqrt{3}d(\Om)$, then 
\begin{equation}\label{min1}
   2\abs{\Om}\ge \omega(\Om)d(\Om), 
\end{equation}
where the equality is achieved by triangles;
\item if $\sqrt{3}d(\Om)\le 2\omega(\Om)\le 2 d(\Om)$, then 
\begin{equation}\label{min2}
    \displaystyle{2\abs{\Om}\ge\pi \omega^2(\Om)-\sqrt{3}d^2(\Om)+6\omega^2(\Om)\left(\tan\left(\arccos\left(\frac{\omega(\Om)}{d(\Om)}\right)\right)-\arccos\left(\frac{\omega(\Om)}{d(\Om)}\right)\right)}=|T_Y|, 
\end{equation}
where the equality is achieved by the Yamanouti set $T_Y$ such that $\omega(T_Y)=\omega(\Om)$ and $d(T_Y)=d(\Om)$.
\end{enumerate}

 Moreover, if we consider the minimization problem of the inradius in the class of convex sets with given diameter and width, we have from Theorem \ref{cifre_wrd_thm}  
 \begin{equation}\label{min3}
   r(\Om)\geq  \begin{cases}
 r(T_I), & \text{if}\ \, 2\omega(\Om)\le \sqrt{3}d(\Om),\\
 r(T_Y), & \text{if}\ \, \sqrt{3}d(\Om)\le 2\omega(\Om)\le 2 d(\Om).
 \end{cases}
  \end{equation}
So combining \eqref{eq:hra} with the estimates \eqref{min1}, \eqref{min2} and \eqref{min3}, we obtain 

 \begin{equation*}
   h(\Om)\leq  \begin{cases}
\frac{1}{r(T_I)}+\sqrt{\frac{\pi}{|T_I|}}= h(T_I), & \text{if}\ \, 2\omega(\Om)\le \sqrt{3}d(\Om),\\
 \frac{1}{r(T_Y)}+\sqrt{\frac{\pi}{|T_Y|}}, & \text{if}\ \, \sqrt{3}d(\Om)\le 2\omega(\Om)\le 2 d(\Om).
 \end{cases}
  \end{equation*}
The explicit formula given in inequality \eqref{hdwup_1} is then obtained by using \eqref{min2} and $r(T_Y)=\omega(T_Y)-\frac{d(T_Y)}{\sqrt{3}}$, see \cite[Theorem 2]{cifre4}. 
\end{itemize}
\end{proof}

\begin{figure}[h]
    \centering
    \includegraphics[scale=.26]{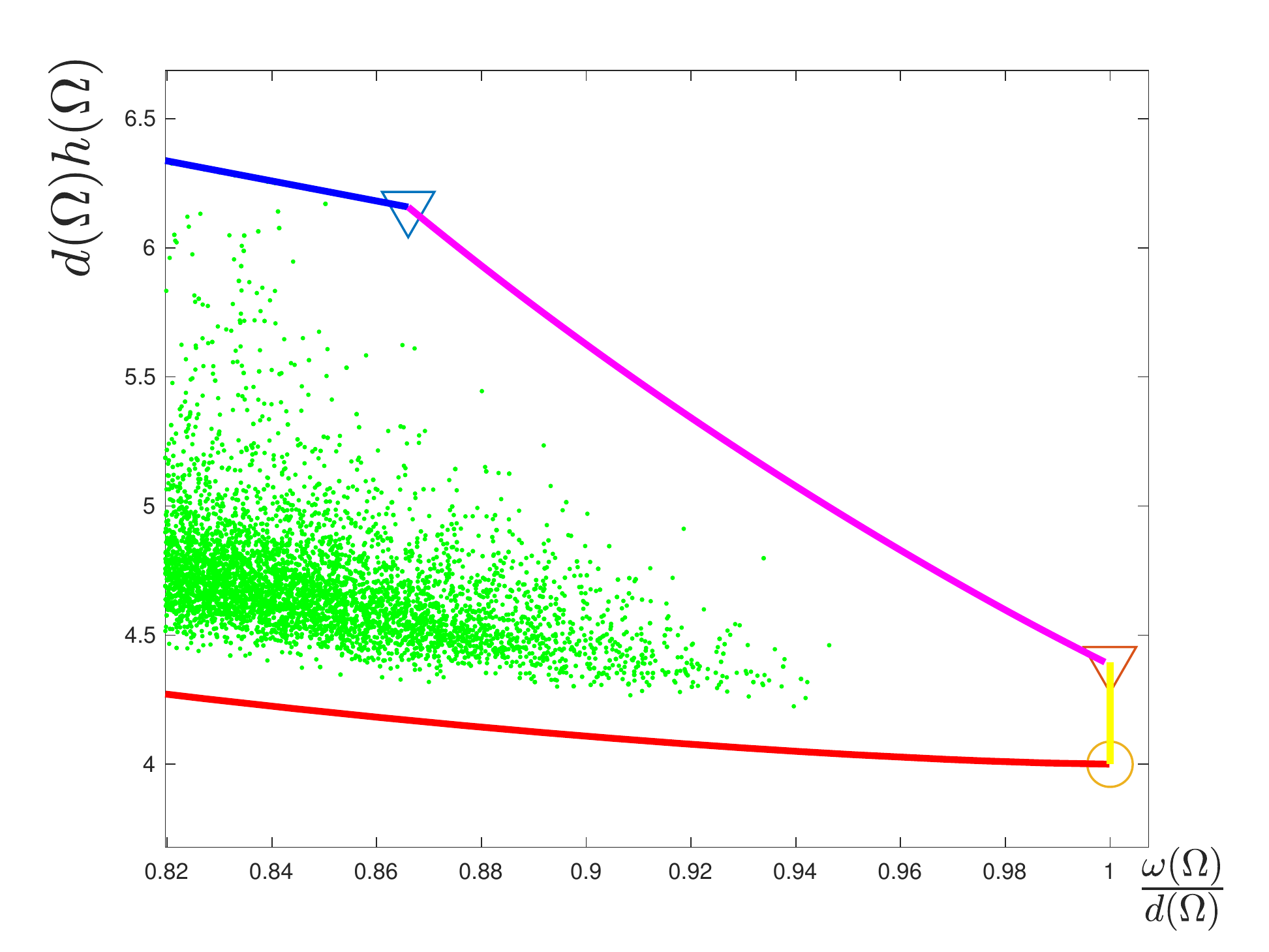}
    \includegraphics[scale=.26]{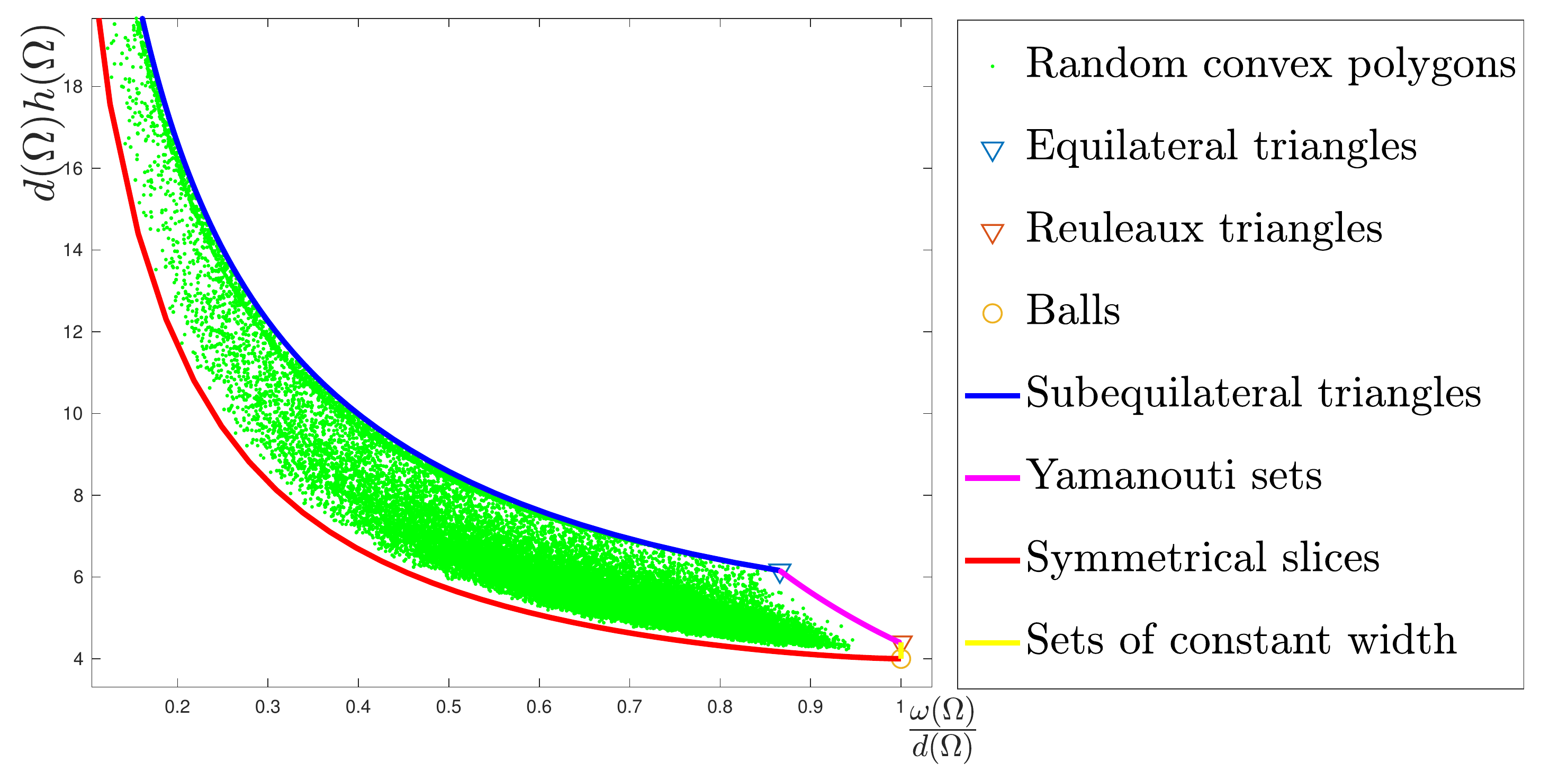}
    \caption{Blaschke--Santal\'o diagram of the triplet $(\omega,h,d)$.}
    \label{fig:hwd}
\end{figure}
\newpage 

We are also able to give an explicit sharp lower bound for the Cheeger constant in terms of the width and the diameter. We note that although being sharp, this inequality does not correspond to a part of the boundary of the Blaschke--Santal\'o diagram as shown in Figure \ref{fig:remark52}. 

\begin{remark}\label{rk:hwd}
Let $\Om\in \K^2$. We have
\begin{equation}\label{lr}
    h(\Om)>\frac{1}{\omega(\Om)}+\frac{1}{d(\Om)}+\sqrt{\left(\frac{1}{\omega(\Om)}+\frac{1}{d(\Om)}\right)^2-\frac{4-\pi}{\omega(\Omega)d(\Om)}},
\end{equation}
where the equality is asymptotically achieved by a sequence of thin collapsing rectangles or spherical slices as shown in Figure 
\ref{fig:remark52}.

In order to prove \eqref{lr}, it is enough to consider the inequality
$$\abs{\Om}\le \omega(\Om) d(\Om)$$
and to use the strategy of Lemma \ref{lem:main}.

\begin{figure}[h]
    \centering
    \includegraphics[scale=.6]{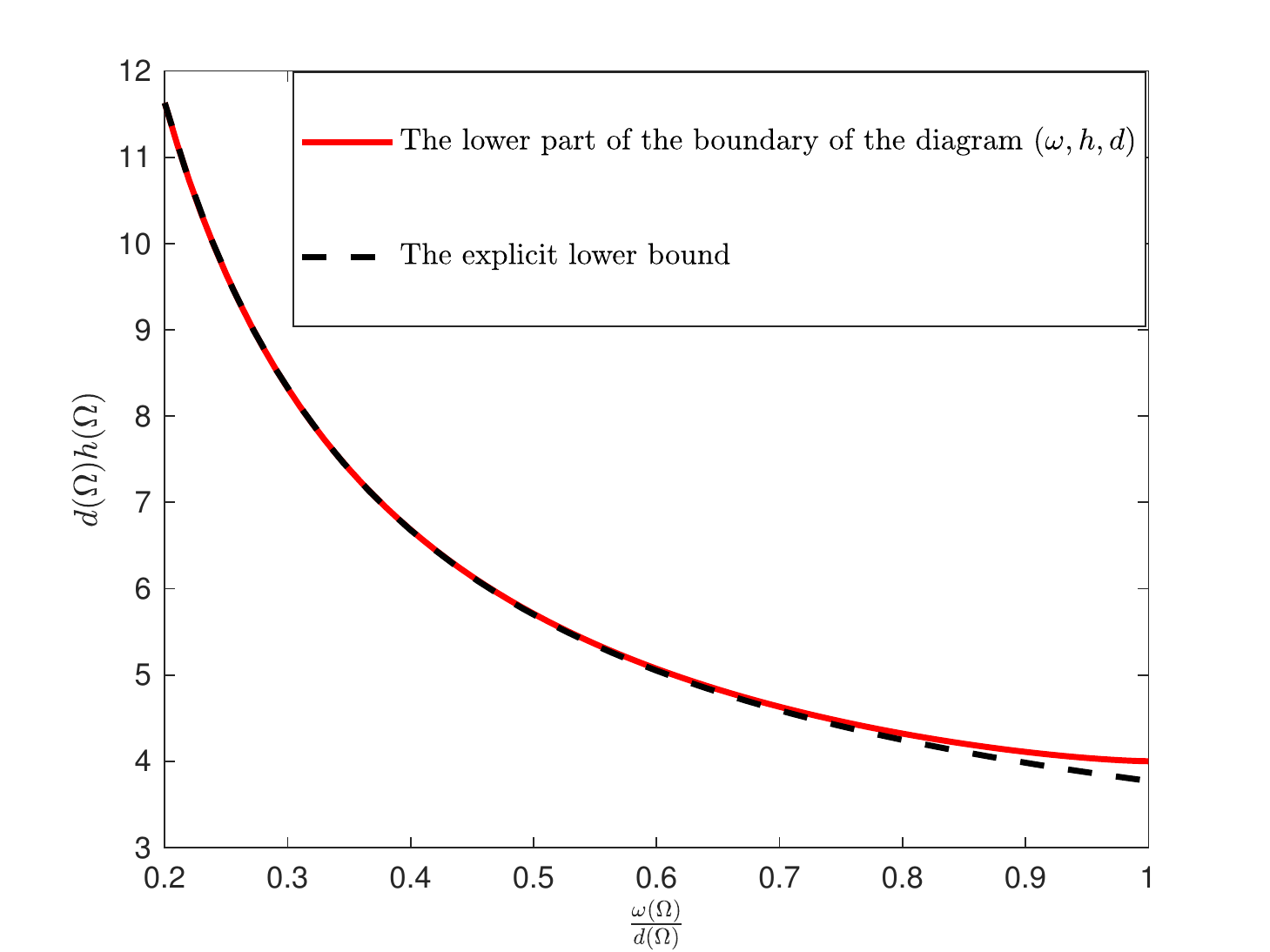}
    \caption{The explicit inequality \eqref{lr} and the lower part of the boundary of the diagram $(\omega,h,d)$.}
    \label{fig:remark52}
\end{figure}
\end{remark}

\subsection{ The triplet $(\omega, h, R)$}
\begin{proposition}\label{prop_hRw}
Let $\Omega\in\mathcal{K}^2$. We have 
\begin{equation}\label{low_hRw}
  h(\Om)\ge \frac{1}{t_{g_4^\Om}},
\end{equation}
where $t_{g_4^\Om}$ is the smallest solution of the equation
$$g_4^\Om(t):=  \frac{(\omega(\Om)-2t)}{2} \sqrt{4 \left(R(\Om)-t\right)^2-\left(\omega(\Om)-2t\right)^2} + 2(R(\Om)-t)^2 \arcsin{\left(\frac{\omega(\Om)-2t}{2(R(\Om)-t)}\right)}= \pi t^2$$
on $[0,r(\Om)]$. The equality in \eqref{low_hRw} is achieved by symmetrical spherical slices.

Moreover, if $\omega(\Om)\le \frac{3}{2}R(\Omega)$, then
\begin{equation}\label{upper_hRw}
h(\Omega)\leq h(T_I), 
\end{equation}
where $T_I$ is the subequilateral triangle such that $R(T_I)=R(\Om)$ and $\omega(T_I)=\omega(\Om)$. The equality in \eqref{upper_hRw} is achieved by the subequilateral triangle $T_I$. 
\end{proposition}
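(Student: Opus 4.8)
The plan is to mimic closely the structure of the proof of Proposition \ref{prop_hdw}, replacing the diameter by the circumradius and using the corresponding area/inradius inequalities from the $(A,\omega,R)$ and $(R,r,\omega)$ diagrams. For the lower bound \eqref{low_hRw}, I would apply Lemma \ref{lem:main}: Theorem \ref{cifrsalth} gives $|\Om|\le \chi(\omega(\Om),R(\Om))$ with equality only for symmetrical spherical slices, where $\chi(\omega,R)=\tfrac{\omega}{2}\sqrt{4R^2-\omega^2}+2R^2\arcsin\bigl(\tfrac{\omega}{2R}\bigr)$. I would check that $\chi$ is increasing in each variable on the admissible region: differentiating, $\partial_\omega\chi=\sqrt{4R^2-\omega^2}>0$ and $\partial_R\chi=4R\arcsin\bigl(\tfrac{\omega}{2R}\bigr)>0$ (the square-root terms cancel exactly, as in the analogous computations in Proposition \ref{prop_hdr} and \ref{prop_hdw}). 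Then for $t\in[0,r(\Om)]$, using \eqref{inr}, \eqref{width}, \eqref{circumradius} and monotonicity,
\[
|\Om_{-t}|\le \chi\bigl(\omega(\Om_{-t}),R(\Om_{-t})\bigr)\le \chi\bigl(\omega(\Om)-2t,\,R(\Om)-t\bigr)=:g_4^\Om(t),
\]
and Lemma \ref{lem:main} yields $h(\Om)\ge 1/t_{g_4^\Om}$. For sharpness I would verify that a symmetrical spherical slice $\S$ of circumradius $R$ and width $\omega$ satisfies $(\S)_{-t}=\S_{R-t,\omega-2t}$ (inner parallel sets of a slice are again slices with both parameters shifted appropriately), so that all the inequalities above become equalities and $h(\S)=1/t_{g_4^\Om}$; one must double-check the admissibility constraint $\omega(\Om_{-t})\le 2R(\Om_{-t})$ is preserved along the shrinking, which follows since $\omega-2t\le 2R-2t\le 2(R-t)$.

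For the upper bound \eqref{upper_hRw} under the hypothesis $\omega(\Om)\le\tfrac32 R(\Om)$, I would combine \eqref{eq:hra} (sharp for sets homothetic to their form bodies) with the lower bounds on area and inradius coming from the $(A,\omega,R)$ and $(R,r,\omega)$ diagrams. Inequality \eqref{ARw_low} gives, in the range $\omega\le\tfrac32 R$, a lower bound on $|\Om|$ attained exactly by subequilateral triangles; inequality \eqref{cifre_gomis_1_eq} of Theorem \ref{cifre_gomis_1_thm}, rewritten as $(4r-\omega)(\omega-2r)\le 2r^3/R$, together with the obvious $\omega\le R+r$ from \eqref{santalo_eq_1}, pins down a lower bound on $r(\Om)$ in terms of $\omega,R$ that is again attained by the (sub)equilateral isosceles triangle $T_I$ with $\omega(T_I)=\omega(\Om)$, $R(T_I)=R(\Om)$. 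Plugging both lower bounds into \eqref{eq:hra} gives $h(\Om)\le \tfrac{1}{r(\Om)}+\sqrt{\pi/|\Om|}\le \tfrac{1}{r(T_I)}+\sqrt{\pi/|T_I|}=h(T_I)$, where the last equality uses that $T_I$ is Cheeger-extremal for \eqref{eq:hra}, i.e. homothetic to its form body — here one should note that a subequilateral triangle's incircle touches all three sides, so Definition \ref{formbody} applies and \eqref{eq:hra} is an equality for $T_I$. Equality throughout then forces $\Om=T_I$.

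The main obstacle I anticipate is the upper-bound part: one must make sure that the triangle simultaneously minimizing the area and the triangle minimizing the inradius (in the class with fixed $\omega$ and $R$) are the \emph{same} subequilateral triangle $T_I$, so that the two lower bounds can be used in tandem without loss — otherwise the resulting estimate, while valid, would not be sharp. This requires carefully matching the extremal configurations in Theorem \ref{cifrsalth}-type results for $(A,\omega,R)$ with those of Theorem \ref{cifre_gomis_1_thm} for $(R,r,\omega)$, and checking that for an isosceles triangle with the prescribed $\omega$ and $R$ the constraint $\omega\le\tfrac32 R$ is exactly the range where the triangle is subequilateral (rather than obtuse/superequilateral), which is where \eqref{ARw_low} holds. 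A secondary technical point is verifying the monotonicity of $\chi$ and the identification of inner parallel sets of slices; these are routine but need the admissibility inequalities to be propagated along $t\mapsto\Om_{-t}$.
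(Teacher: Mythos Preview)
Your proposal is correct and follows essentially the same route as the paper: the lower bound via Lemma \ref{lem:main} applied to $\chi$ from Theorem \ref{cifrsalth} together with the monotonicity of $\chi$ and the estimates \eqref{width}, \eqref{circumradius}, and the upper bound via \eqref{eq:hra} combined with $|\Om|\ge|T_I|$ from \eqref{ARw_low} and $r(\Om)\ge r(T_I)$ from \eqref{cifre_gomis_1_eq}. The only superfluous ingredient is your appeal to \eqref{santalo_eq_1}: the paper obtains $r(\Om)\ge r(T_I)$ directly from \eqref{cifre_gomis_1_eq} alone, since equality there characterizes isosceles triangles.
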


\begin{proof}
\begin{itemize}
    \item Let us start by proving the lower bound \eqref{low_hRw}, by using the strategy given in Lemma \ref{lem:main}. 
Let us recall the function defined in \eqref{henkchi} $$\chi:(\omega, R)\longmapsto \frac{\omega}{2} \sqrt{4 R^2-\omega^2} + 2R^2 \arcsin{\frac{\omega}{2R}}.$$
We have, for every $R,\omega>0$, 
$$\frac{\partial \chi}{\partial R}(\omega, R) = 4R \arcsin{\frac{\omega}{2 R}}\ge 0 \ \ \ \text{and}\ \ \ \ \frac{\partial \chi}{\partial \omega}(\omega, R) = \sqrt{4R^2-\omega^2}\ge 0.$$
Thus, using Theorem \ref{cifrsalth}, we have, for every $t\in [0,r(\Omega))$, 
$$|\Omega_{-t}|\leq \chi(\omega(\Omega_{-t}), R(\Omega_{-t}))\leq \chi(\omega(\Om)-2t ,R(\Om)-t)=:g_4^\Om(t),$$
where we used \eqref{width} and \eqref{circumradius}.
By Lemma \ref{lem:main}, we have 
$$h(\Om)\ge \frac{1}{t_{g_4^\Om}},$$
where $t_{g_4^\Om}$ is the smallest solution to the equation $g_4^\Om(t)=\pi t^2$ on the interval $[0,r(\Om)]$. 

\item Let us now prove the upper bound \eqref{upper_hRw}. We recall that for all $\Om\in\mathcal{K}^2$ such that $\omega(\Om)\le \frac{3}{2}R(\Om)$, we have 
$$\abs{\Om}\ge \abs{T_I}\ \ \text{and}\ \ \ r(\Om)\ge r(T_I),$$
 where $T_I$ is a subequilateral triangle such that $\omega(T_I)=\omega(\Om)$ and $R(T_I)=R(\Om)$, see respectively \eqref{ARw_low} and \eqref{cifre_gomis_1_eq}. 

 To conclude, one just has to combine those estimates with the inequality  
$$  h(\Om)\le \frac{1}{r(\Om)}+ \sqrt{\frac{\pi}{\abs{\Om}}}, $$
see \cite{ftJMAA}, which is an equality for sets that are homothetic to their form bodies, in particular, subequilateral  triangles. 

\end{itemize}
\end{proof}

\begin{figure}[h]
    \centering
    \includegraphics[scale=.32]{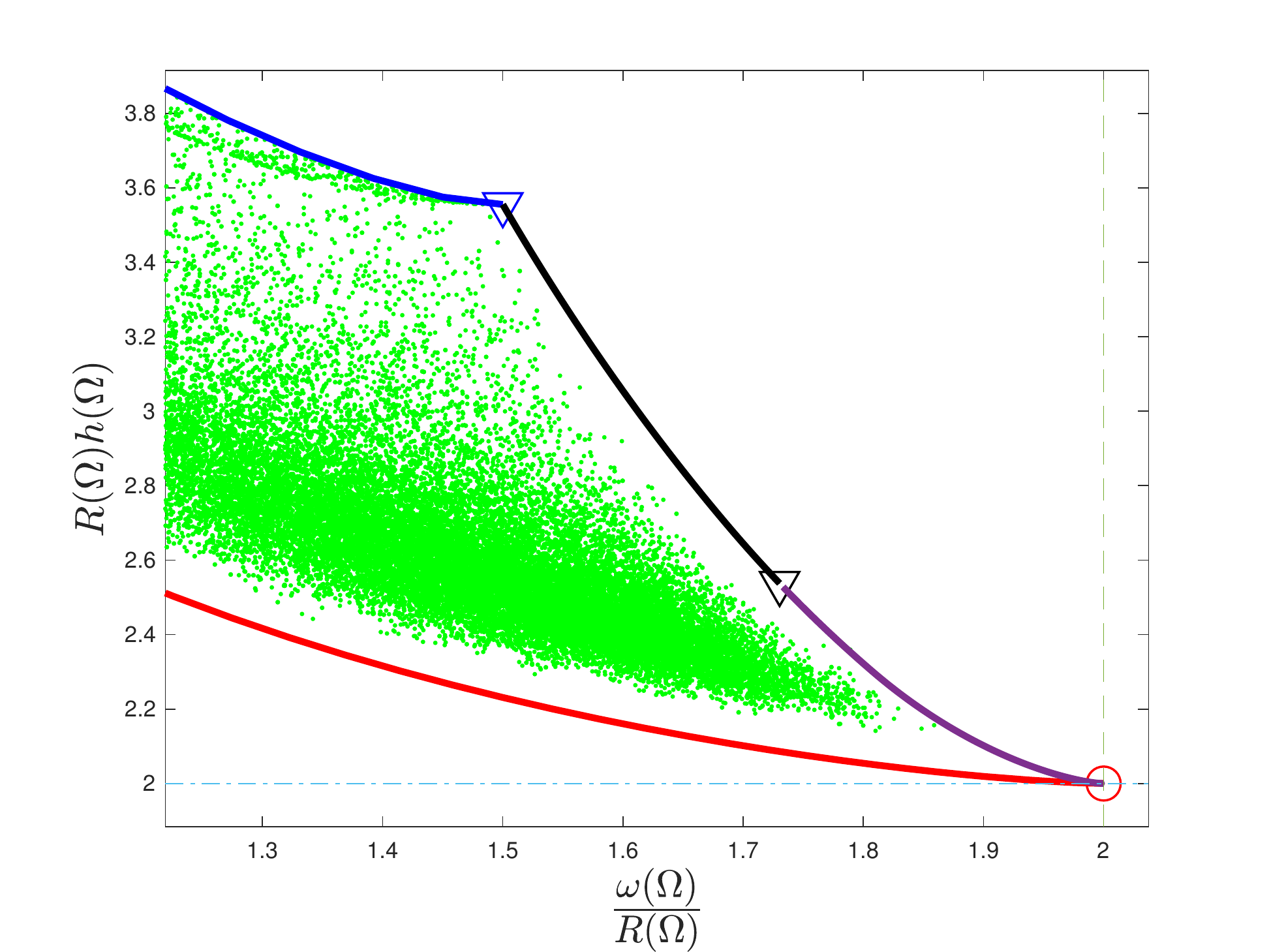}
    \includegraphics[scale=.32]{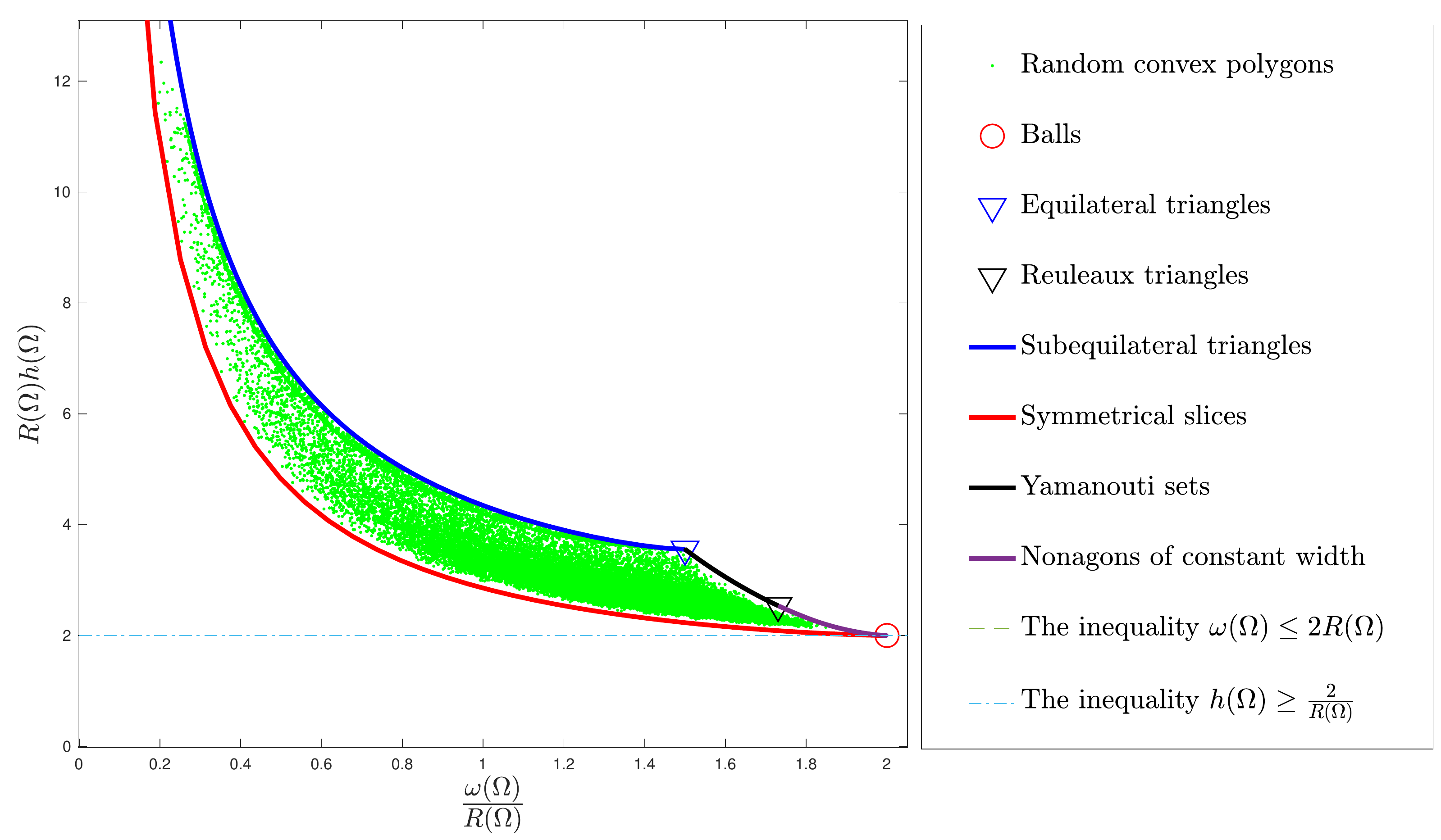}
    \caption{Blaschke--Santal\'o diagram of the triplet $(\omega,h,R)$.}
    \label{fig:hwR}
\end{figure}

\newpage
In the following remark, we give some explicit sharp bounds, that do not correspond to parts of the boundary of the Blaschke--Santal\'o diagram.

\begin{remark}\label{rk:hwR}
We can prove that
\begin{equation}\label{ns}
     h(\Om)\le \frac{3}{\omega(\Om)}+\sqrt{\frac{\pi}{\sqrt{3}R(\Om)\omega(\Om)}}.
\end{equation}
We recall the following inequalities, proved in \cite{inequalities_convex},
\begin{equation}
    \label{Arw}
    \abs{\Om}\ge \sqrt{3}R(\Om)\omega(\Om)\ \ \ \text{and}\ \ \ 
    \omega(\Om)\le 3 r(\Om).
\end{equation}
By combining these estimates and the upper bound in \eqref{eq:hra}, we have
$$h(\Om)\leq \frac{3}{\omega(\Om)}+\sqrt{\frac{\pi}{\sqrt{3}R(\Om)\omega(\Om)}}.$$
Since the equality in \eqref{eq:hra} is achieved by sets that are homothetic to their form bodies, while the equalities in \eqref{Arw} are achieved by equilateral triangles, that are particular sets that are homothetic to their form bodies, we have the equality in \eqref{ns} for equilateral triangles.

Moreover, another sharp lower bound can be obtained by using the strategy from Lemma \ref{lem:main}, starting from
$$\abs{\Om}< 2 R(\Om)\omega(\Om),$$
which is asymptotically achieved by a sequence of rectangles or spherical slices with circumradius that goes to infinity (see \cite{henk}), as shown in Figure 
\ref{fig:remark54}. We get
$\abs{\Om_{-t}}\le 2 R(\Om_{-t})\omega(\Om_{-t})\le 2(R(\Om)-t)(\omega(\Om)-2t)$
and, consequently, 
\begin{equation}\label{lrr}
   h(\Om)\ge\frac{4-\pi}{(2R(\Om)+\omega(\Om))-\sqrt{(2R(\Om)+\omega(\Om))^2-(4-\pi)(2R(\Om)\omega(\Om))}}.  
\end{equation}
\begin{figure}[h]
    \centering
    \includegraphics[scale=.6]{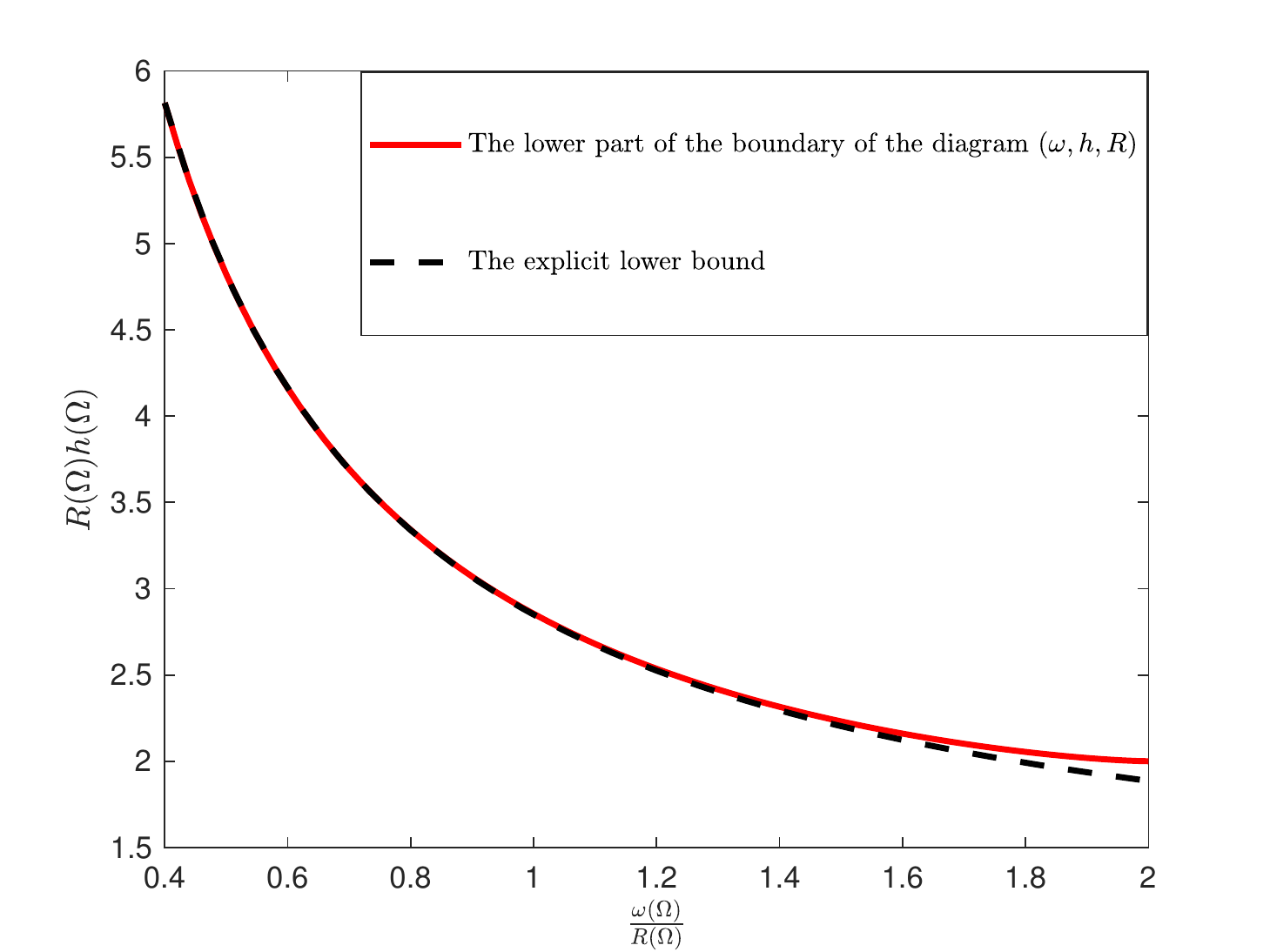}
    \caption{The explicit inequality \eqref{lrr} and the lower part of the boundary of the diagram $(\omega,h,d)$.}
    \label{fig:remark54}
\end{figure}
\end{remark}

\newpage
\subsection{The triplet $(\omega, h, |\cdot|)$}
\begin{proposition}\label{prop_hAw}
Let $\Omega\in\mathcal{K}^2$. We have
\begin{equation}\label{hAw_low}
    h(\Om)\geq \frac{2}{\omega(\Om)}+\frac{\pi \omega(\Om)}{2 \abs{\Om} },
\end{equation}
where the equality is achieved by stadiums. Moreover, we have
\begin{equation}\label{hAw_up}
    h(\Om)\leq h(T_I),
\end{equation}
where $T_I$ is a subequilateral triangle such that $|\Om|=|T_I|$ and $\omega(\Om)=\omega(T_I)$. The equality in \eqref{hAw_up} is achieved by the subequilateral triangle $T_I$.
\end{proposition}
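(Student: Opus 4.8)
The plan is to obtain both bounds by coupling the sharp inequalities \eqref{eq:hra} between $h$, the inradius and the area with classical comparisons between the inradius, the minimal width and the area.

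\textbf{Lower bound.} Write $F(s):=\frac1s+\frac{\pi s}{\abs{\Om}}$, so that the left inequality in \eqref{eq:hra} reads $h(\Om)\ge F(r(\Om))$ and the claimed bound \eqref{hAw_low} is exactly $F(r(\Om))\ge \frac{2}{\omega(\Om)}+\frac{\pi\omega(\Om)}{2\abs{\Om}}=F\big(\tfrac{\omega(\Om)}{2}\big)$. The function $F$ is decreasing on $(0,\sqrt{\abs{\Om}/\pi}\,]$ and increasing afterwards, so this does \emph{not} follow from the single relation $r(\Om)\le\omega(\Om)/2$; the extra ingredient is the estimate $r(\Om)\le\frac{2\abs{\Om}}{\pi\omega(\Om)}$, obtained by chaining the left inequality in \eqref{pra}, namely $\abs{\Om}\ge\tfrac12 P(\Om)r(\Om)$, with the classical bound $P(\Om)\ge\pi\omega(\Om)$ (see \cite{inequalities_convex}). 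Since $\tfrac{\omega(\Om)}{2}\cdot\tfrac{2\abs{\Om}}{\pi\omega(\Om)}=\tfrac{\abs{\Om}}{\pi}$ is the square of the minimizer of $F$, the numbers $\tfrac{\omega(\Om)}{2}$ and $\tfrac{2\abs{\Om}}{\pi\omega(\Om)}$ are precisely the pair at which $F$ takes the common value $F(\omega(\Om)/2)$, and $r(\Om)$ is bounded by the smaller of the two, which by AM--GM is $\le\sqrt{\abs{\Om}/\pi}$. Hence $r(\Om)$ lies in the decreasing range of $F$ and $F(r(\Om))\ge F(\omega(\Om)/2)$, giving \eqref{hAw_low}. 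For the equality case: equality in \eqref{eq:hra} characterizes stadiums, and any stadium has $r=\omega/2$, hence $F(r)=F(\omega/2)$; conversely stadiums realize equality in \eqref{hAw_low}.

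\textbf{Upper bound.} The right inequality in \eqref{eq:hra} gives $h(\Om)\le\frac{1}{r(\Om)}+\sqrt{\pi/\abs{\Om}}$, and since $\abs{\Om}=\abs{T_I}$ it suffices to prove $r(\Om)\ge r(T_I)$: indeed every triangle has an incircle tangent to all three of its sides, hence is homothetic to its form body, so that \eqref{eq:hra} is an \emph{equality} for $T_I$, i.e. $\frac{1}{r(T_I)}+\sqrt{\pi/\abs{T_I}}=h(T_I)$. To get $r(\Om)\ge r(T_I)$ I would use \eqref{cifre_salinas_rwA}: setting $A:=\abs{\Om}=\abs{T_I}$, $\omega:=\omega(\Om)=\omega(T_I)$ and $\Phi(s):=s^4\omega^3-(\omega-2s)^2(4s-\omega)A^2$, inequality \eqref{cifre_salinas_rwA} reads $\Phi(r(\Om))\ge0$, while $\Phi(r(T_I))=0$ by its equality case. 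Writing $s=\omega u$ with $u\in[\tfrac13,\tfrac12]$ (the admissible range of $r/\omega$, since $2r\le\omega\le3r$), one has $\Phi(\omega u)=\omega^7u^4-A^2\omega^3(1-2u)^2(4u-1)$; as $u\mapsto u^4$ is increasing and $\frac{d}{du}\big[(1-2u)^2(4u-1)\big]=8(1-2u)(1-3u)\le0$ on $[\tfrac13,\tfrac12]$, the map $\Phi$ is strictly increasing on $[\omega/3,\omega/2]$. Together with $\Phi(r(\Om))\ge0=\Phi(r(T_I))$ this forces $r(\Om)\ge r(T_I)$, hence $h(\Om)\le\frac{1}{r(\Om)}+\sqrt{\pi/\abs{\Om}}\le\frac{1}{r(T_I)}+\sqrt{\pi/\abs{T_I}}=h(T_I)$, which is \eqref{hAw_up}; the equality case $\Om=T_I$ is the equality case of \eqref{cifre_salinas_rwA} under the area and width constraints, for which \eqref{eq:hra} is also an equality.

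The one delicate point is the lower bound: because $F$ is not monotone, one genuinely needs the two-sided control of $r(\Om)$ by $\omega/2$ and $2\abs{\Om}/(\pi\omega)$, together with the observation that their geometric mean is exactly the minimizer of $F$. The upper bound is then routine modulo the short sign computation showing $\Phi$ is increasing on $[\omega/3,\omega/2]$.
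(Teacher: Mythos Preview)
Your argument is correct, and for the upper bound it is essentially the paper's: use \eqref{cifre_salinas_rwA}, check that the function of $r$ is strictly increasing on $[\omega/3,\omega/2)$, deduce $r(\Om)\ge r(T_I)$, and plug into the upper bound of \eqref{eq:hra} (which is an equality on triangles).

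For the lower bound your argument is in fact more careful than the paper's. The paper simply writes the lower bound of \eqref{eq:hra} (with a stray factor $2$), asserts that $r\mapsto \frac{1}{r}+\frac{\pi r}{|\Om|}$ is ``strictly decreasing'', and replaces $r(\Om)$ by $\omega(\Om)/2$. But that function has its minimum at $\sqrt{|\Om|/\pi}$, and $\omega(\Om)/2\le\sqrt{|\Om|/\pi}$ is \emph{not} automatic: for the equilateral triangle one has $|\Om|=\omega^2/\sqrt{3}$, hence $\pi\omega^2/4>|\Om|$. Your extra ingredient $r(\Om)\le 2|\Om|/(\pi\omega(\Om))$ (from $|\Om|\ge \tfrac12 P r$ and $P\ge\pi\omega$), together with the neat observation that $\omega/2$ and $2|\Om|/(\pi\omega)$ are $F$-conjugate (their product equals the square of the minimizer, so $F$ takes the same value at both), is exactly what closes this gap: $r(\Om)$ is bounded by the smaller of the two conjugate points, hence lies in the decreasing branch, and $F(r(\Om))\ge F(\omega/2)$ follows. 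So here you have genuinely improved on the paper's presentation.
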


\begin{proof}
\begin{itemize}
    \item Let us prove inequality \eqref{hAw_low}. We recall the lower bound in \eqref{eq:hra}
$$h(\Om)\ge \frac{1}{r(\Om)}+\frac{\pi r(\Om)}{2|\Om|},$$
which is an equality if and only if $\Om$ is a stadium. Inequality \eqref{hAw_low} is then a consequence of the fact that the function $r\longmapsto  \frac{1}{r}+\frac{\pi r}{2|\Om|}$ is strictly decreasing and $r(\Om)\leq \frac{\omega(\Om)}{2}$ (where the equality holds for stadiums).


\item Let us now prove inequality \eqref{hAw_up}. We start by  recalling  inequality \eqref{cifre_salinas_rwA}:
$$    |\Om|\le \sqrt{\frac{ r^4(\Om)\omega^3(\Om)}{(\omega(\Om)-2r(\Om))^2(4r(\Om)-\omega(\Om))}}=:f_{\omega(\Om)}(r(\Om)).$$
By direct computations, we prove that the continuous function 
$$f_{\omega(\Om)}: r\longmapsto \sqrt{\frac{r^4 \omega(\Om)^3}{(\omega(\Om)-2r)^2(4r-\omega(\Om))}}$$
is strictly increasing on $\left[\frac{\omega(\Om)}{3}, \frac{\omega(\Om)}{2}\right)$. Let us denote by $g_{\omega(\Om)}$ the inverse function of $f_{\omega(\Om)}$, which is also continuous and strictly increasing. We have 
$$r(\Om)\ge g_{\omega(\Om)}(|\Om|)=r(T_I),$$
where $T_I$ is any subequilateral triangle such that $\omega(T_I) = \omega(\Om)$ and $|T_I|=|\Om|$. Thus, we have 
$$h(\Om)\leq \frac{1}{r(\Om)}+\sqrt{\frac{\pi}{|\Om|}}\leq\frac{1}{r(T_I)}+\sqrt{\frac{\pi}{|T_I|}}=h(T_I),$$with equality if and only if $\Om=T_I$, where we used the upper bound in \eqref{eq:hra}.
\end{itemize}
 
\end{proof}

\begin{figure}[h]
    \centering
   \includegraphics[scale=.33]{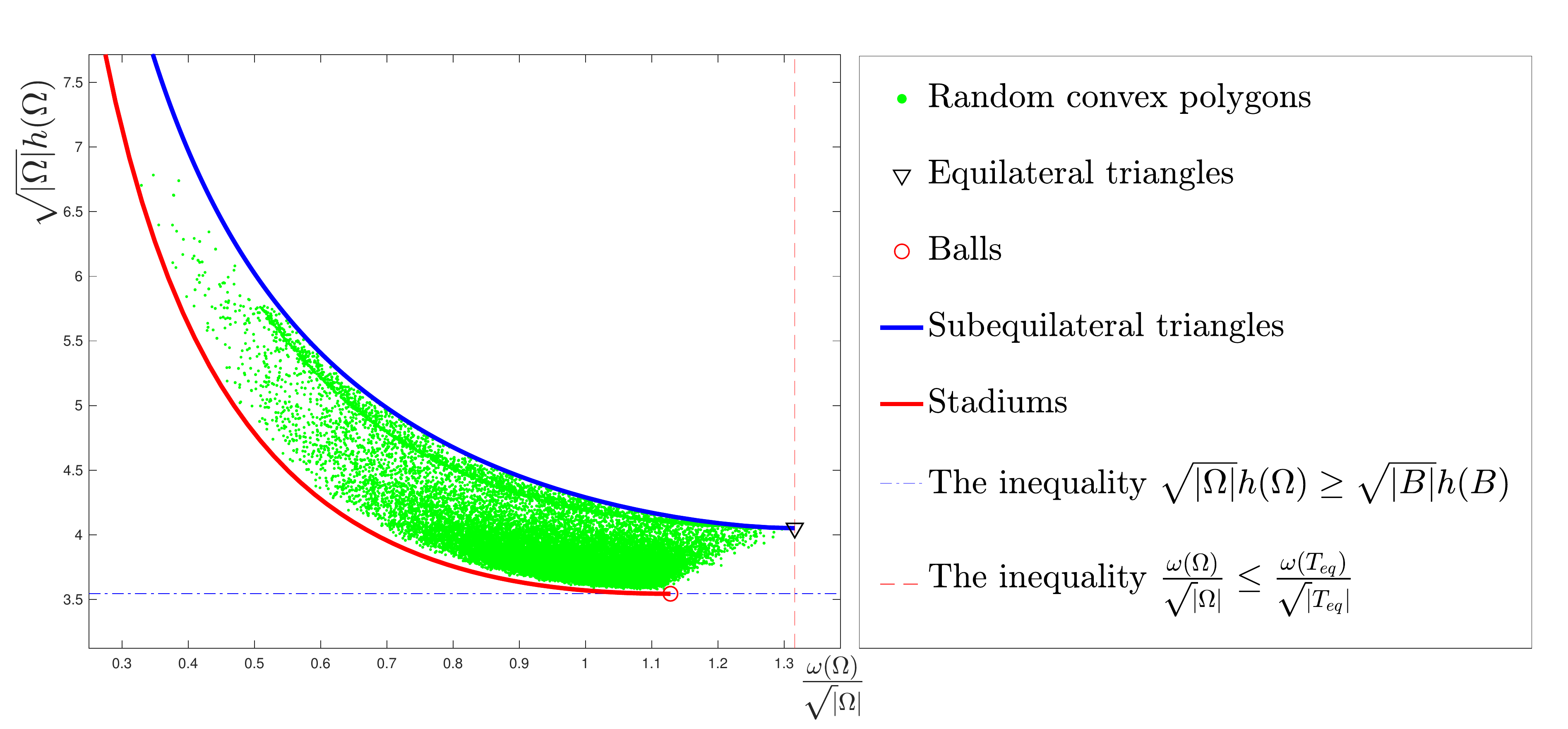}
    \caption{Blaschke--Santal\'o diagram of the triplet $(\omega,h,|\cdot|)$.}
    \label{fig:hwa}
\end{figure}

\begin{remark}\label{rk:hAw}
One may use classical convex geometry inequalities to obtain simpler bounds than the implicit one given in \eqref{hAw_up}. Indeed, if we combine the inequalities in \eqref{eq:hra} and the following classical ones 
\begin{equation*}\label{eq:rw}
    \frac{2}{\omega(\Om)}\leq \frac{1}{r(\Om)}\leq \frac{2}{\omega(\Om)}+ \frac{\omega(\Om)}{\sqrt{3}|\Om|},
\end{equation*}
where the lower bound is realized in particular by stadiums, and the upper one by equilateral triangles (see for example \cite{inequalities_convex} and the references therein), we obtain the following inequalities
\begin{equation}\label{hAw_up1}
h(\Om)\leq \frac{2}{\omega(\Om)}+ \frac{\omega(\Om)}{\sqrt{3}|\Om|}+\sqrt{\frac{\pi}{|\Om|}}
\end{equation}
and 
\begin{equation}\label{hAw_up2}
    h(\Om)\leq \frac{2}{\omega(\Om)-\frac{\omega(\Om)^3}{4|\Om|}}+\sqrt{\frac{\pi}{|\Om|}}.
\end{equation}
The bound \eqref{hAw_up1} is attained for equilateral triangles and \eqref{hAw_up2} is asymptotically attained for a sequence of thin subequilateral triangles. 
\end{remark}

\subsection{The triplet $(\omega, h, P)$}
\begin{proposition}\label{prop_hwP}
Let $\Omega\in\mathcal{K}^2$. We have 
\begin{equation}
    \label{hwp_low}
    h(\Om)\ge \frac{2}{\omega(\Omega)}+\frac{2\pi}{2P(\Om)-\pi \omega(\Om)},
\end{equation}
where the equality is achieved by stadiums. 

Moreover, if $P(\Om)\ge 2\sqrt{3}\omega(\Om)$, then,
\begin{equation}
    \label{hwp_up}
    h(\Om)\le h(T_I),
\end{equation}
where $T_I$ is a subequilateral triangle such that $P(T_I)=P(\Om)$ and $\omega(T_I)=\omega(\Om)$. The equality in \eqref{hwp_up} is achieved by the subequilateral triangle $T_I$.
\end{proposition}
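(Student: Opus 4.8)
\emph{Overall strategy.} I would argue exactly as in the proof of Proposition~\ref{prop_hAw}, combining the sharp bounds \eqref{eq:hra} with the two classical convex‑geometry estimates \eqref{pra} (relating $|\Om|$, $P(\Om)$, $r(\Om)$) and \eqref{cifre_salinas_rwP} (relating $\omega(\Om)$, $r(\Om)$, $P(\Om)$); the only new feature compared with the $(\omega,h,|\cdot|)$ case is that the area is no longer a fixed datum and has to be eliminated via \eqref{pra}.

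\emph{Lower bound \eqref{hwp_low}.} Start from the lower bound in \eqref{eq:hra}, $h(\Om)\ge \tfrac{1}{r(\Om)}+\tfrac{\pi r(\Om)}{|\Om|}$, and insert the upper estimate $|\Om|\le r(\Om)\big(P(\Om)-\pi r(\Om)\big)$ from \eqref{pra} to get
\[
h(\Om)\ge \frac{1}{r(\Om)}+\frac{\pi}{P(\Om)-\pi r(\Om)}.
\]
The map $r\mapsto \tfrac1r+\tfrac{\pi}{P-\pi r}$ is strictly decreasing on $(0,P/(2\pi))$ (its derivative vanishes only at $r=P/(2\pi)$ and tends to $-\infty$ as $r\to 0^+$). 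Since $r(\Om)\le \omega(\Om)/2$ and, in $\K^2_{\omega,P}$, one has $P(\Om)\ge \pi\,\omega(\Om)$, so that $r(\Om)\le \omega(\Om)/2\le P(\Om)/(2\pi)$, the right‑hand side above is minimized at $r=\omega(\Om)/2$, yielding $\tfrac{2}{\omega(\Om)}+\tfrac{2\pi}{2P(\Om)-\pi\omega(\Om)}$. Stadiums realize simultaneously $r=\omega/2$, equality in the upper bound of \eqref{pra}, and equality in the lower bound of \eqref{eq:hra}, hence equality in \eqref{hwp_low}.

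\emph{Upper bound \eqref{hwp_up}.} Assume $P(\Om)\ge 2\sqrt{3}\,\omega(\Om)$ and let $T_I$ be the subequilateral triangle with $\omega(T_I)=\omega(\Om)$ and $P(T_I)=P(\Om)$. The crucial step is $r(\Om)\ge r(T_I)$. Writing $f_\omega(r):=\dfrac{2r\,\omega^{3/2}}{(\omega-2r)\sqrt{4r-\omega}}$, inequality \eqref{cifre_salinas_rwP} reads $P(\Om)\le f_{\omega(\Om)}(r(\Om))$ whenever $4r(\Om)>\omega(\Om)$, with equality precisely for subequilateral triangles. A direct computation (substitute $s=r/\omega$) shows that $f_\omega$ is strictly increasing on $[\omega/3,\omega/2)$, with $f_\omega(\omega/3)=2\sqrt3\,\omega$ and $f_\omega(r)\to+\infty$ as $r\to(\omega/2)^-$; since every $\Om\in\K^2$ satisfies $\omega(\Om)\le 3r(\Om)$, i.e.\ $r(\Om)\in[\omega(\Om)/3,\omega(\Om)/2]$ (see \cite{inequalities_convex}), the hypothesis $P(\Om)\ge 2\sqrt3\,\omega(\Om)$ guarantees $r(T_I)=f_{\omega(\Om)}^{-1}(P(\Om))\in[\omega(\Om)/3,\omega(\Om)/2)$, and inverting \eqref{cifre_salinas_rwP} on this increasing branch (the case $r(\Om)=\omega(\Om)/2$ being trivial) gives $r(\Om)\ge r(T_I)$. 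Finally, combining the upper bound in \eqref{eq:hra} with the lower bound $|\Om|\ge \tfrac{P(\Om)r(\Om)}{2}$ in \eqref{pra},
\[
h(\Om)\le \frac{1}{r(\Om)}+\sqrt{\frac{\pi}{|\Om|}}\le \frac{1}{r(\Om)}+\sqrt{\frac{2\pi}{P(\Om)\,r(\Om)}},
\]
and since $r\mapsto \tfrac1r+\sqrt{\tfrac{2\pi}{Pr}}$ is strictly decreasing, $r(\Om)\ge r(T_I)$ forces $h(\Om)\le \tfrac{1}{r(T_I)}+\sqrt{\tfrac{2\pi}{P(\Om)r(T_I)}}=h(T_I)$; the last equality holds because a triangle is homothetic to its form body, so \eqref{eq:hra} is sharp for $T_I$ and $|T_I|=\tfrac{P(T_I)r(T_I)}{2}$. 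Tracking the equality cases of \eqref{eq:hra}, \eqref{pra} and \eqref{cifre_salinas_rwP} shows equality in \eqref{hwp_up} forces $\Om=T_I$.

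\emph{Main obstacle.} The delicate point is the monotonicity of $f_\omega$: on the full interval $(\omega/4,\omega/2)$ it is \emph{not} monotone — it diverges at both endpoints and attains an interior minimum at $r=\tfrac{\sqrt5-1}{4}\omega$, which lies just below $\omega/3$ — so \eqref{cifre_salinas_rwP} by itself does not pin down $r(\Om)$. One must invoke the a priori bound $r(\Om)\ge \omega(\Om)/3$ to stay on the increasing branch, and it is precisely the threshold $P(\Om)\ge 2\sqrt3\,\omega(\Om)=f_{\omega(\Om)}(\omega(\Om)/3)$ that makes both the inversion legitimate and the triangle $T_I$ well defined; below this threshold the extremal set is no longer a subequilateral triangle (compare the Yamanouti/equilateral regimes appearing in Propositions~\ref{prop_hdw} and \ref{prop_hRw}).
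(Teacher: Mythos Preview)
Your proof is correct. For the lower bound you combine the same three ingredients as the paper (the lower bound in \eqref{eq:hra}, the upper bound $|\Om|\le r(P-\pi r)$ in \eqref{pra}, and $r\le \omega/2$), just in a different order: you pass through \eqref{eq:hrP} and then substitute $r=\omega/2$, whereas the paper passes through \eqref{hAw_low} and then uses $|\Om|\le \tfrac{\omega}{2}(P-\tfrac{\pi\omega}{2})$; the two routes are equivalent.

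For the upper bound there is a genuine, if small, difference. The paper obtains $r(\Om)\ge r(T_I)$ exactly as you do (inverting \eqref{cifre_salinas_rwP} on the increasing branch $[\omega/3,\omega/2)$), but then invokes a separate area inequality from \cite{yamanouti}, namely $|\Om|\ge |T_I|$ under the hypothesis $P\ge 2\sqrt{3}\,\omega$, to conclude $h(\Om)\le \tfrac{1}{r(T_I)}+\sqrt{\pi/|T_I|}=h(T_I)$. You instead route through the $(P,h,r)$ bound $h(\Om)\le \tfrac{1}{r}+\sqrt{2\pi/(Pr)}$ from \eqref{eq:hrP}, use only its monotonicity in $r$ together with $r(\Om)\ge r(T_I)$, and recover $h(T_I)$ via $|T_I|=\tfrac{P(T_I)r(T_I)}{2}$. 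This is a cleaner argument: it dispenses with the Yamanouti area estimate entirely and shows that the hypothesis $P\ge 2\sqrt{3}\,\omega$ is needed only to place $r(T_I)$ on the increasing branch of $f_\omega$, not for a second independent inequality.
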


\begin{proof}
\begin{itemize}
    \item Inequality \eqref{hwp_low} is a consequence of \eqref{hAw_low} and the inequality
$$|\Om|\leq \frac{\omega(\Om)}{2}\left(P(\Om)-\frac{\pi\omega(\Om)}{2}\right),$$
which is an equality for stadiums, see for example \cite{inequalities_convex}. 

\item Let us now assume that $P(\Om)\ge 2\sqrt{3}\omega(\Om)$. In order to prove \eqref{hwp_up}, we recall inequality \eqref{cifre_salinas_rwP}:

$$    P(\Om)\le \sqrt{\frac{4 r^2(\Om)\omega^3(\Om)}{(\omega(\Om)-2r(\Om))^2(4r(\Om)-\omega(\Om))}}=:f_{\omega(\Om)}(r(\Om)).$$
By direct computations, we prove that the continuous function 
$$f_{\omega(\Om)}: r\longmapsto \sqrt{\frac{4 r^2 \omega(\Om)^3}{(\omega(\Om)-2r)^2(4r-\omega(\Om))}}$$
is strictly increasing on $\left[\frac{\omega(\Om)}{3}, \frac{\omega(\Om)}{2}\right)$. Let us denote by $g_{\omega(\Om)}$ the inverse function of $f_{\omega(\Om)}$, which is also continuous and strictly increasing. We have 
$$r(\Om)\ge g_{\omega(\Om)}(P(\Om))=r(T_I),$$
where $T_I$ is any subequilateral triangle such that $\omega(T_I) = \omega(\Om)$ and $P(T_I)=P(\Om)$. Moreover, since $P(\Om)\ge 2\sqrt{3}\omega(\Om)$, we have by the results contained in  \cite{yamanouti},
$$|\Om|\ge |T_I|,$$
(see also \cite{inequalities_convex} as a reference).
Finally, we obtain $$h(\Om)\leq \frac{1}{r(\Om)}+\sqrt{\frac{\pi}{|\Om|}}\leq \frac{1}{r(T_I)}+\sqrt{\frac{\pi}{|T_I|}}=h(T_I).$$

\end{itemize}
 
\end{proof}

\begin{figure}[h]
    \centering
    \includegraphics[scale=.31]{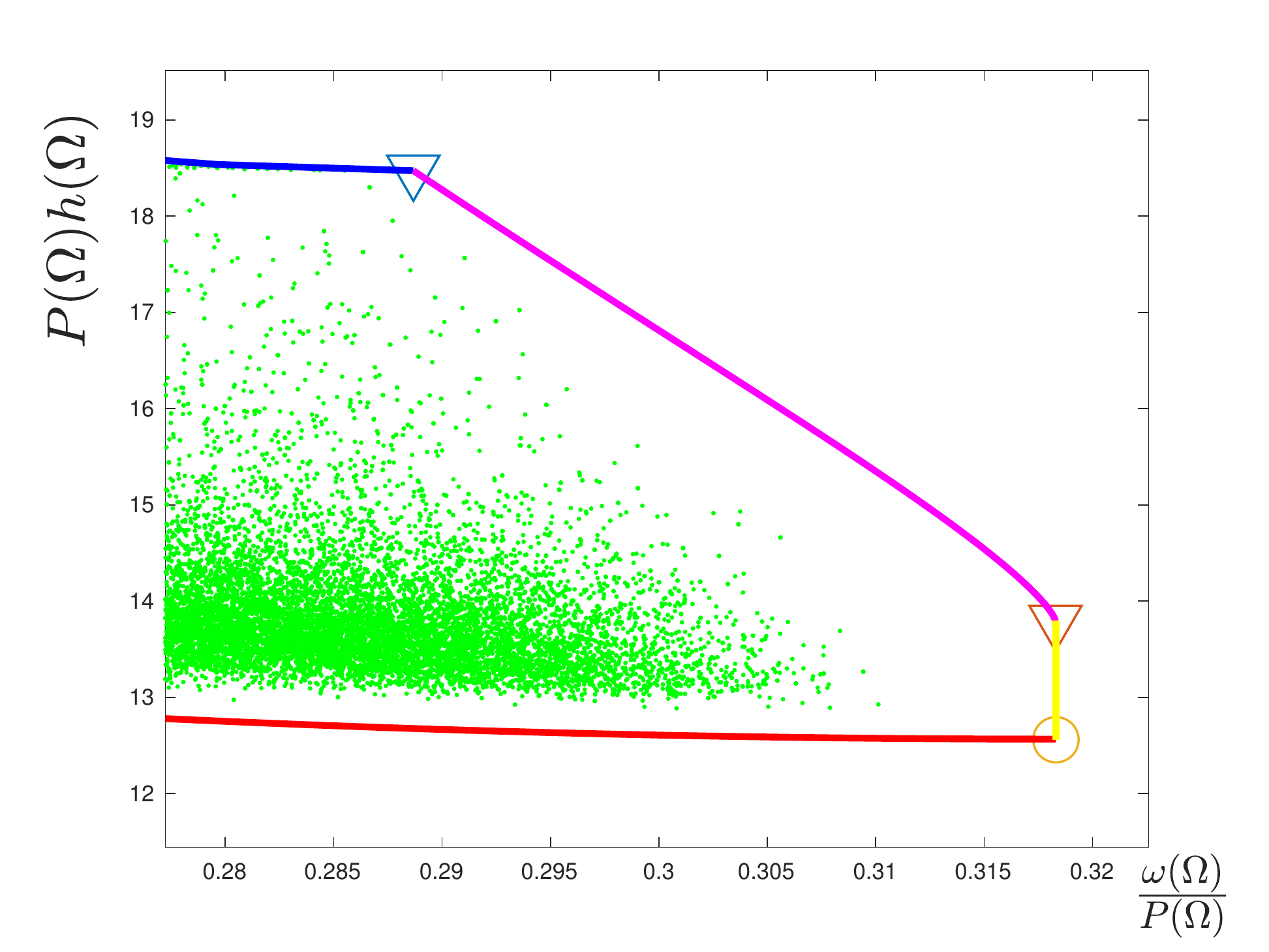}
    \includegraphics[scale=.3]{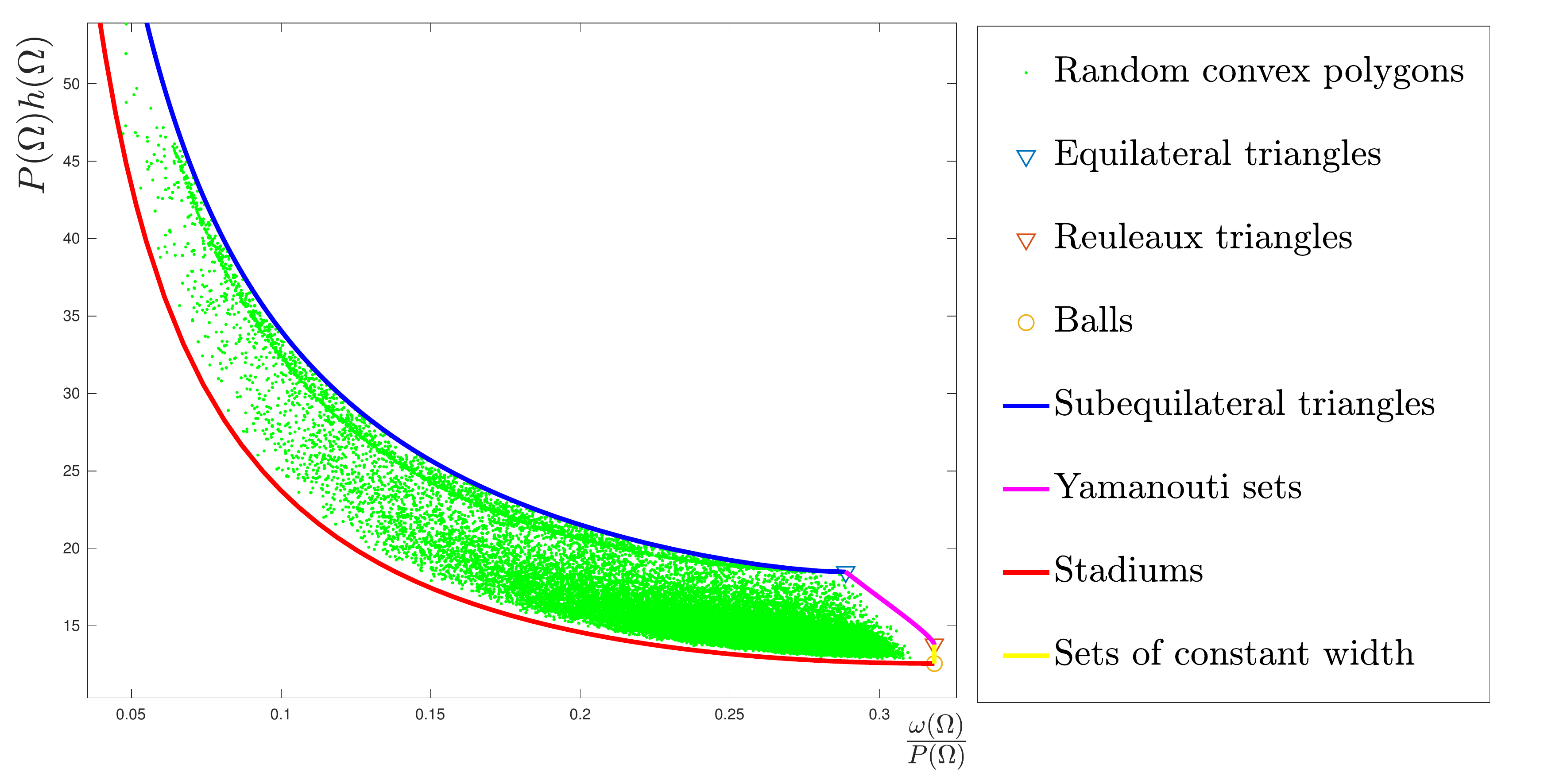}
    \caption{Blaschke--Santal\'o diagram of the triplet $(\omega,h,P)$.}
    \label{fig:hpw}
\end{figure}

\subsection{The triplet $(R, h, d)$}
\begin{proposition}\label{prop_hRd}
Let $\Omega\in\mathcal{K}^2$. We have 
\begin{equation}\label{hRdup}
    h(\Om)\le \frac{2R(\Om) \left(2R(\Om)+ \sqrt{4R(\Om)^2-d(\Om)^2}\right)}{d(\Om)^2\sqrt{4R(\Om)^2-d(\Om)^2}}+ \sqrt{\frac{4\pi R(\Om)^2}{d(\Om)^3 \sqrt{4R(\Om)^2-d(\Om)^2}}},
\end{equation}
where the equality is achieved by subequilateral triangles. 
\end{proposition}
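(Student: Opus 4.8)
The plan is to follow the same scheme as for the upper bounds in Propositions~\ref{prop_hrP} and~\ref{prop_hRr}: I would combine the sharp estimate
$$h(\Om)\le \frac{1}{r(\Om)}+\sqrt{\frac{\pi}{|\Om|}}$$
from~\eqref{eq:hra}, which is an equality exactly for sets homothetic to their form body --- in particular for every triangle, since the incircle of a triangle touches all three of its sides --- with sharp lower bounds for $r(\Om)$ and $|\Om|$ in terms of $R(\Om)$ and $d(\Om)$ whose common extremal set is a subequilateral triangle; substituting those bounds into the displayed inequality then produces an estimate that is still attained by that triangle. If $d(\Om)=2R(\Om)$ the right-hand side of~\eqref{hRdup} is $+\infty$ and there is nothing to prove, so I may assume $\sqrt{3}\,R(\Om)\le d(\Om)<2R(\Om)$, the left inequality being Jung's theorem.

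The first input is the sharp lower bound for the inradius in terms of circumradius and diameter,
$$r(\Om)\ge \frac{d^2(\Om)\,\sqrt{4R^2(\Om)-d^2(\Om)}}{2R(\Om)\bigl(2R(\Om)+\sqrt{4R^2(\Om)-d^2(\Om)}\bigr)},$$
which already appears in the proof of Theorem~\ref{th:existence} (see~\cite[Section~9]{santalo}). A short computation shows that equality holds for the subequilateral triangle $T_I$ with $R(T_I)=R(\Om)$ and $d(T_I)=d(\Om)$: this triangle has two equal legs of length $d$, base of length $d\sqrt{4R^2-d^2}/R$ (so that $d$ is indeed its longest side precisely when $d>\sqrt{3}\,R$), it is acute, its area equals $d^3\sqrt{4R^2-d^2}/(4R^2)$, and the formula $r=|T_I|/s(T_I)$ for the inradius in terms of area and semiperimeter reproduces the right-hand side above. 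The second input is the sharp lower bound for the area in terms of diameter and circumradius, valid on $\sqrt{3}\,R\le d<2R$,
$$|\Om|\ge \frac{d^3(\Om)\,\sqrt{4R^2(\Om)-d^2(\Om)}}{4R^2(\Om)},$$
whose extremal set is again the same triangle $T_I$; I would cite this classical inequality (see~\cite{inequalities_convex} and the references therein), or, if a self-contained argument is wanted, reprove it by noting that for $d<2R$ the smallest enclosing disk of $\Om$ is pinned down by three boundary points spanning an acute inscribed triangle, and then minimizing the area of such a triangle subject to its longest side having length $d$, which gives $T_I$.

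Combining the two displayed lower bounds with the upper bound in~\eqref{eq:hra} yields, after obvious simplifications, exactly the right-hand side of~\eqref{hRdup}:
$$h(\Om)\le \frac{2R(\Om)\bigl(2R(\Om)+\sqrt{4R^2(\Om)-d^2(\Om)}\bigr)}{d^2(\Om)\,\sqrt{4R^2(\Om)-d^2(\Om)}}+\sqrt{\frac{4\pi R^2(\Om)}{d^3(\Om)\,\sqrt{4R^2(\Om)-d^2(\Om)}}}.$$
For the sharpness, the subequilateral triangle $T_I$ with circumradius $R(\Om)$ and diameter $d(\Om)$ realizes equality in all three ingredients simultaneously: in~\eqref{eq:hra} because it is homothetic to its form body, and in the inradius and area estimates by the previous paragraph; hence it realizes equality in~\eqref{hRdup}.

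The only genuinely delicate point is the area input: one needs not merely the inequality $|\Om|\ge d^3\sqrt{4R^2-d^2}/(4R^2)$, but the fact that its extremal set is the very subequilateral triangle that also extremizes the inradius inequality, so that the three optimization problems have a common optimizer and the resulting bound is sharp rather than just valid. Everything else is a routine combination of estimates already recorded in the paper plus an elementary check that substituting the two extremal values reproduces~\eqref{hRdup}.
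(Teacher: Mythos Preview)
Your proposal is correct and follows essentially the same approach as the paper: combine the upper bound $h(\Om)\le \frac{1}{r(\Om)}+\sqrt{\frac{\pi}{|\Om|}}$ from~\eqref{eq:hra} with the sharp lower bounds for $r(\Om)$ and $|\Om|$ in terms of $R(\Om)$ and $d(\Om)$, both attained by the subequilateral triangle. The paper cites \cite{cifre2} for the area bound and \cite{santalo} for the inradius bound, and otherwise the argument is identical to yours.
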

\begin{proof}
Inequality \eqref{hRdup} is obtained by combining 
$$h(\Om)\leq \frac{1}{r(\Om)}+\sqrt{\frac{\pi}{|\Om|}},$$
see \cite{ftJMAA}, which is an equality for sets that are homothetic to their form bodies (in particular subequilateral triangles), and 
$$    |\Om|\ge \frac{d(\Om)^3\sqrt{4R(\Om)^2-d(\Om)^2}}{4R(\Om)^2}
    \ \ \text{and}\ \ \  r(\Om)\ge \frac{d(\Om)^2\sqrt{4R(\Om)^2-d(\Om)^2}}{2R(\Om)\left(2R(\Om)+\sqrt{4R(\Om)^2-d(\Om)^2}\right)},
$$
respectively proved in \cite{cifre2} and \cite{santalo}, where the equalities hold only for subequilateral triangles.
\end{proof}

\begin{figure}[h]
    \centering
    \includegraphics[scale=.4]{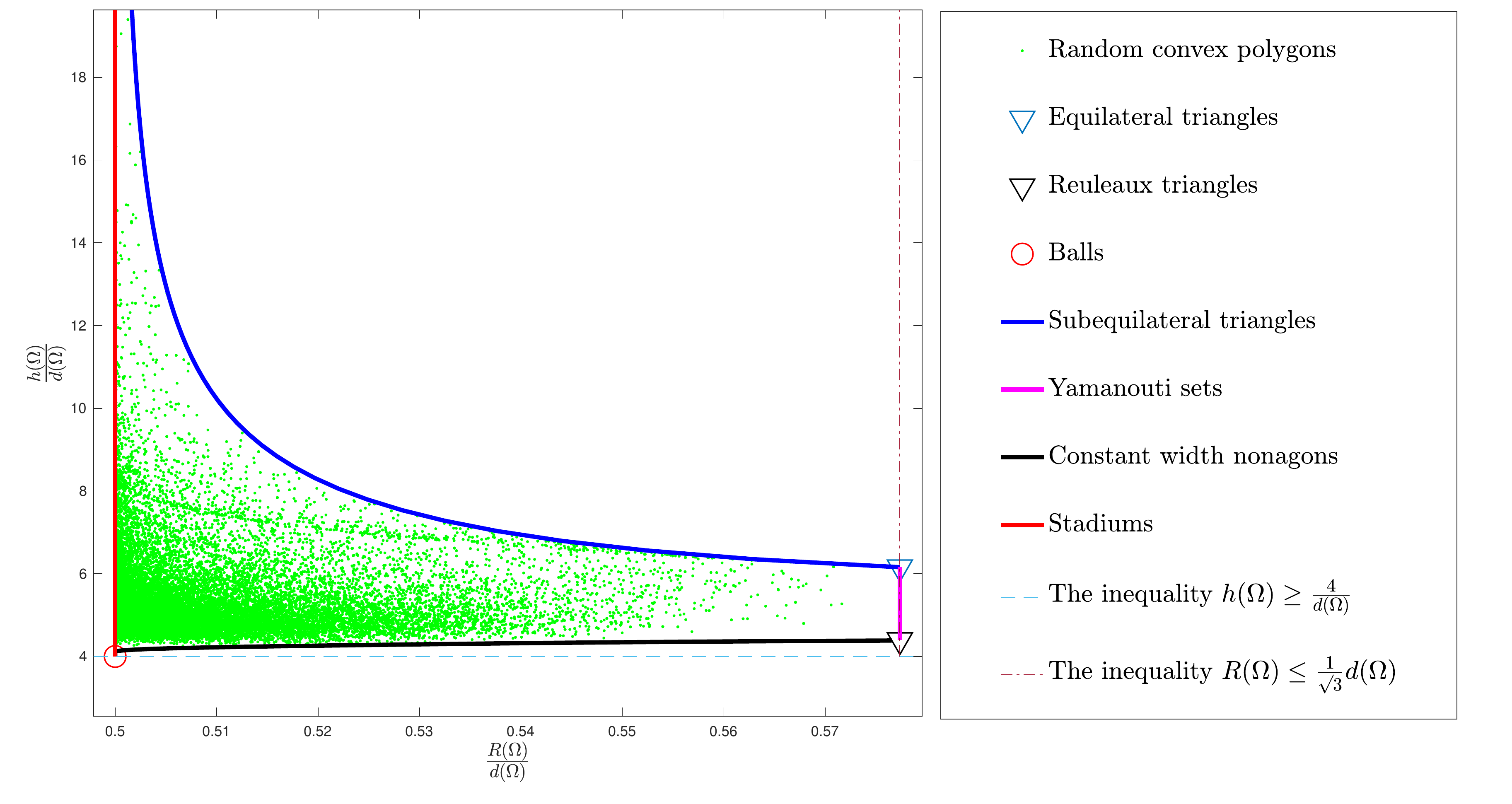}
    \caption{Blaschke--Santal\'o diagram of the triplet $(R, h,d)$, See Conjecture \ref{conj:hdR} and Figure \ref{cifrefig} for the definition of constant width nonagons.}
    \label{fig:hdR}
\end{figure}

\subsection{The triplet $(\omega, h, r)$}
\begin{proposition}\label{prop_hwr}
Let $\Omega\in\mathcal{K}^2$. We have 
\begin{equation}\label{ineq:hwr_lower}
h(\Omega)\ge \frac{1}{r(\Omega)}+\frac{1}{r(\Omega)}\sqrt{\pi\left(1-\frac{2r(\Omega)}{\omega(\Omega)}\right)\sqrt{\frac{4r(\Omega)}{\omega(\Omega)}-1}},    \end{equation}
where the equality is achieved by subequilateral triangles.
\end{proposition}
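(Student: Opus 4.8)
The plan is to identify the right‑hand side of \eqref{ineq:hwr_lower} with $h(T_I)$, where $T_I$ is the subequilateral triangle with $r(T_I)=r(\Om)=:r$ and $\omega(T_I)=\omega(\Om)=:\omega$, and then to prove $h(\Om)\ge h(T_I)$ by the strategy of Lemma~\ref{lem:main}. Equality in \eqref{cifre_salinas_rwA} for $T_I$ (Theorem~\ref{cifre_salinas_thm_rw}) gives $|T_I|=\dfrac{r^2\omega^{3/2}}{(\omega-2r)\sqrt{4r-\omega}}=\dfrac{r^2}{\left(1-\frac{2r}{\omega}\right)\sqrt{\frac{4r}{\omega}-1}}$, and, $T_I$ being homothetic to its form body (Definition~\ref{formbody}), it attains equality in the upper bound of \eqref{eq:hra}; hence $h(T_I)=\frac1r+\sqrt{\pi/|T_I|}$, which is exactly the claimed quantity. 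So it suffices to prove $h(\Om)\ge h(T_I)$ for every $\Om\in\K^2_{\omega,r}$.

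To apply Lemma~\ref{lem:main}, I need an upper bound for $|\Om_{-t}|$ in terms of $t$ only. Applying \eqref{cifre_salinas_rwA} to $\Om_{-t}$ and using $r(\Om_{-t})=r-t$ (from \eqref{inr}), and writing $F(w,\rho):=\sqrt{\frac{\rho^4w^3}{(w-2\rho)^2(4\rho-w)}}$ for the corresponding maximal area of a convex set of width $w$ and inradius $\rho$, this reads $|\Om_{-t}|\le F(\omega(\Om_{-t}),r-t)$. A short computation — of the same kind as in the proofs of Propositions~\ref{prop_hAw} and~\ref{prop_hwP} — shows that $w\mapsto F(w,\rho)$ is strictly decreasing on $(2\rho,3\rho]$, so here I need a \emph{lower} bound on $\omega(\Om_{-t})$ and the estimate $\omega(\Om_{-t})\le\omega-2t$ of \eqref{width} is useless. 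The required bound is
$$\omega(\Om_{-t})\ \ge\ \frac{r-t}{r}\,\omega,\qquad 0\le t\le r,$$
which follows from the elementary inclusion $\frac{r-t}{r}(\Om-c)+c\subseteq\Om_{-t}$, where $c$ is an incenter of $\Om$: if $x\in\Om$ and $|v|\le1$ then $\frac{r-t}{r}x+\frac tr(c+rv)$ is a convex combination of $x\in\Om$ and $c+rv\in B(c,r)\subseteq\Om$, hence lies in $\Om$, so $B\!\left(\frac{r-t}{r}(x-c)+c,\ t\right)\subseteq\Om$; now use that the width is translation invariant, $1$‑homogeneous and monotone under inclusions.

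Combining this with the monotonicity of $F$ gives $|\Om_{-t}|\le F\!\left(\tfrac{r-t}{r}\omega,\,r-t\right)=\left(\frac{r-t}{r}\right)^2|T_I|=:g_5^\Om(t)$, a decreasing quadratic with $g_5^\Om(0)=|T_I|>0$ and $g_5^\Om(r)=0$, so the set in \eqref{set} is a single point. Solving $g_5^\Om(t)=\pi t^2$, i.e. $\sqrt{|T_I|}\,(r-t)=\sqrt{\pi}\,r\,t$, yields $t_{g_5^\Om}=\dfrac{\sqrt{|T_I|}\,r}{\sqrt{\pi}\,r+\sqrt{|T_I|}}$, and \eqref{kawohl2} gives
$$h(\Om)\ \ge\ \frac1{t_{g_5^\Om}}\ =\ \frac1r+\frac1r\sqrt{\frac{\pi r^2}{|T_I|}}\ =\ \frac1{r(\Om)}+\frac1{r(\Om)}\sqrt{\pi\left(1-\frac{2r(\Om)}{\omega(\Om)}\right)\sqrt{\frac{4r(\Om)}{\omega(\Om)}-1}},$$
which is \eqref{ineq:hwr_lower}. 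For the equality statement, observe that when $\Om=T_I$ each $(T_I)_{-t}$ is a homothetic subequilateral triangle of inradius $r-t$, so \eqref{cifre_salinas_rwA} and the inclusion above hold with equality for every $t$; hence $g_5^{T_I}(t)=|(T_I)_{-t}|$ and $h(T_I)=1/t_{g_5^{T_I}}$, and the bound is attained exactly by subequilateral triangles.

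The conceptual hinge — and the point where the hypothesis $2r<\omega$ enters, ensuring $\frac{r-t}{r}\omega>2(r-t)$ so that we stay in the range where $F(\cdot,r-t)$ is finite and decreasing — is that, unlike in Propositions~\ref{prop_hdw}--\ref{prop_hRw} where the auxiliary functional enters the relevant area estimate monotonically and the crude bounds of Lemma~\ref{lem:diameter_inner_set} suffice, here the width enters \eqref{cifre_salinas_rwA} with the ``wrong'' sign; one must therefore bound $\omega(\Om_{-t})$ from \emph{below}, which is precisely what forces the comparison set to be the subequilateral triangle rather than a rounder shape. I do not expect any serious difficulty beyond noticing this and the (one‑line) monotonicity check on $F$.
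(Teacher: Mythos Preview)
Your argument is correct and takes a genuinely different route from the paper's. The paper does not use Lemma~\ref{lem:main} here at all: instead it invokes the classical Bonnesen fact that either $\omega(\Om)=2r(\Om)$ (trivial case) or the incircle of $\Om$ touches $\partial\Om$ in at least three points, so that $\Om$ is contained in a triangle $T$ with $r(T)=r(\Om)$; then $h(\Om)\ge h(T)$ and $\omega(\Om)\le\omega(T)$ by monotonicity under inclusion, and the desired inequality is checked directly for $T$ using $h(T)=\tfrac1{r(T)}+\sqrt{\pi/|T|}$ and Theorem~\ref{cifre_salinas_thm_rw}.

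Your approach, by contrast, stays inside the Lemma~\ref{lem:main} framework used throughout Section~\ref{sec3}, and the key new ingredient is the inclusion $\tfrac{r-t}{r}(\Om-c)+c\subseteq\Om_{-t}$, which yields the \emph{lower} bound $\omega(\Om_{-t})\ge\tfrac{r-t}{r}\,\omega(\Om)$ needed to compensate for the ``wrong'' monotonicity of $F$ in its first variable (your derivative check giving $\partial_w\log F^2=\tfrac{8(w-3\rho)}{w(w-2\rho)(4\rho-w)}<0$ on $(2\rho,3\rho)$ is correct). This lower bound is a useful complement to Lemma~\ref{lem:diameter_inner_set}, and your observation that it makes Lemma~\ref{lem:main} applicable even here is a nice unification. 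The paper's proof, on the other hand, is shorter and avoids any computation with inner parallel bodies, at the cost of importing the Bonnesen circumscribed-triangle lemma. One small point: you should state explicitly (as the paper does) that the case $\omega(\Om)=2r(\Om)$ reduces the right-hand side to $1/r(\Om)$, which is Proposition~\ref{two}(4); your argument tacitly assumes $\omega>2r$ throughout.
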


\begin{proof}
The present demonstration is inspired by the proof of \cite[Theorem 5]{cifre_salinas}. 

It is known that the incircle of a set $\Om$ meets the boundary of $\Om$ either in two points contained in two parallel lines, or in (at least) three points that form the vertices of a triangle, see \cite{bonnesen}. In the first case, we have $\omega(\Om)=2r(\Om)$, thus inequality \eqref{ineq:hwr_lower} is equivalent to $h(\Om)\ge \frac{1}{r(\Om)}$, which is proved in Proposition \ref{two}. In the second case, let us denote by $T$ a triangle formed by three lines of support common to $\Om$ and the incircle. 
We have $r(\Om)=r(T)$ and, by $\Omega \subset T$ and the monotonicity with respect to the inclusion, we get
\begin{equation}\label{h}
   h(\Om)\ge h(T)
\end{equation}
and 
\begin{equation}\label{omega}
\omega(\Om)\leq \omega(T).
\end{equation}
So, inequality \eqref{ineq:hwr_lower} is equivalent to the following one
$$
\frac{1}{r(\Om)h(\Om)}f\left(\frac{r(\Om)}{\omega(\Om)}\right)\leq 1,    
$$
where $f:x\in\left[\frac{1}{3},\frac{1}{2}\right]\longmapsto 1+\sqrt{\pi\left(1-2x\right)\sqrt{4x-1}}$. We observe that the function $g:x\in\left[\frac{1}{3},\frac{1}{2}\right]\longmapsto \left(1-2x\right)\sqrt{4x-1}$ is decreasing. Indeed,
$$g'(x) = \frac{4(1-3x)}{\sqrt{4x-1}}\leq 0.$$
Thus, $f$ is also decreasing on $\left[\frac{1}{3},\frac{1}{2}\right]$. Then, since $\frac{r(\Om)}{\omega(\Om)}\ge \frac{r(T)}{\omega(T)}$ by \eqref{omega}, we have 
$$f\left(\frac{r(\Om)}{\omega(\Om)}\right) \leq  f\left(\frac{r(T)}{\omega(T)}\right).$$
Moreover, we get by \eqref{h}, $$\frac{1}{r(\Om)h(\Om)}\leq \frac{1}{r(T)h(T)}.$$
Thus, we obtain
$$\frac{1}{r(\Om)h(\Om)}f\left(\frac{r(\Om)}{\omega(\Om)}\right) \leq \frac{1}{r(T)h(T)}f\left(\frac{r(T)}{\omega(T)}\right)=\frac{1}{1+r(T)\sqrt{\frac{\pi}{|T|}}}f\left(\frac{r(T)}{\omega(T)}\right),$$
where we used the equality $h(T) = \frac{1}{r(T)}+\sqrt{\frac{\pi}{|T|}}$, which holds because $T$ is a triangle, see \cite[Theorem 1.3]{ftJMAA}. 
Now, we use the inequality $$|T|\leq \frac{r(T)^2}{\left(1-\frac{2r(T)}{\omega(T)}\right)\sqrt{\frac{4r(T)}{\omega(T)}-1}},$$
which is an equality if and only if $T$ is a subequilateral triangle, see \cite[Theorem 5]{cifre_salinas}. We then have
$$\frac{1}{r(\Om)h(\Om)}f\left(\frac{r(\Om)}{\omega(\Om)}\right) \leq\frac{1}{1+r(T)\sqrt{\frac{\pi}{|T|}}}f\left(\frac{r(T)}{\omega(T)}\right)\leq 1,$$
which ends the proof. 
\end{proof}

\begin{figure}[h]
    \centering
    \includegraphics[scale=.4]{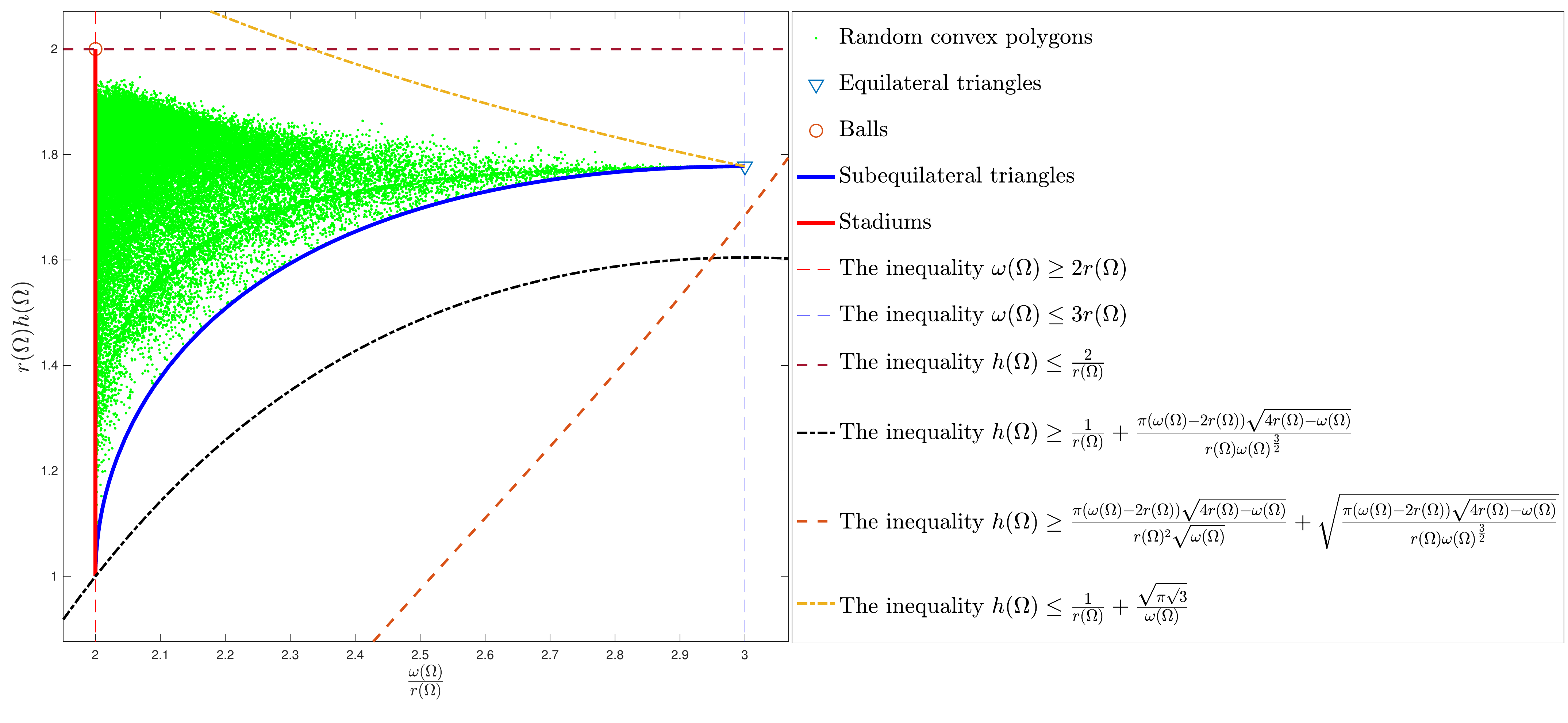}
    \caption{Blaschke--Santal\'o diagram of the triplet $(\omega,h, r)$.}
    \label{fig:hwr}
\end{figure}

\begin{remark}It is possible to prove the following upper bound 
\begin{equation*}\label{ineq:hwr_upper}
h(\Omega) \leq \frac{1}{r(\Omega)}+ \frac{\sqrt{\pi\sqrt{3}}}{\omega(\Omega)},
\end{equation*}
where the equality holds for equilateral triangles, see Figure \ref{fig:hwr}. 

 In order to prove it, one may combine the upper bound in \eqref{eq:hra} and the inequality $\omega(\Om)^2\leq \sqrt{3}|\Om|$, see \cite{inequalities_convex}. 
\end{remark}

\section{Conclusions and conjectures}\label{secult}

In this Section, we collect the conjectures that we deduced from the numerical approximation of the Blaschke--Santaló diagrams that we have plotted in the previous sections.

\begin{conjecture}
We consider the diagram $(\omega, h,  d)$ plotted in Figure \ref{fig:hwd}. Let $\Om\in \mathcal{K}^2$, we conjecture that if $\frac{\sqrt{3}}{2} d(\Omega)\le\omega(\Omega)\le d(\Omega)$, then 
$$h(\Omega)\le h(Y)$$
where $Y$ is a Yamanouti set (see  Definition \ref{yamanouti}) such that $\omega(Y)=\omega(\Om) $ and $d(Y)=d(\Om)$.
\end{conjecture}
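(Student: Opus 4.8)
The naive approach — combining the sharp inequality $h(\Om)\le\frac{1}{r(\Om)}+\sqrt{\pi/|\Om|}$ from \eqref{eq:hra} with the sharp lower bounds $r(\Om)\ge r(Y)$ (from \eqref{cifre_wrd_eq_2}) and $|\Om|\ge|Y|$ (from \eqref{min2}) — only reproduces the weaker estimate \eqref{hdwup_1}. Indeed, for $\frac{\sqrt3}{2}d<\omega<d$ the Yamanouti set $Y$ is \emph{not} homothetic to its form body: its boundary has three genuine corners at the vertices of the underlying equilateral triangle, whence $h(Y)<\frac{1}{r(Y)}+\sqrt{\pi/|Y|}$, with equality only in the degenerate case $\omega=\frac{\sqrt3}{2}d$, where $Y$ is an equilateral triangle. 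So a finer argument is needed, and the natural candidate is the ``reverse'' form of Lemma~\ref{lem:main}.

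The plan is to apply Lemma~\ref{lem:main} with the comparison function $g^\Om(t):=|Y_{-t}|$, the area of the inner parallel set at distance $t$ of the Yamanouti set $Y$ with $\omega(Y)=\omega(\Om)$ and $d(Y)=d(\Om)$ (extended by $0$ on $[r(Y),r(\Om)]$, which is harmless since $r(\Om)\ge r(Y)$). Two ingredients are required. First, an explicit description of the inner parallel sets $Y_{-t}$ — one expects them to be again ``Yamanouti-type'' bodies, with the underlying triangle shrunk and the bounding arcs' radii reduced by $t$, some care being needed near the boundary of the admissibility regime — yielding a closed formula for $t\mapsto|Y_{-t}|$ and identifying $\tau_Y:=1/h(Y)$ as its unique crossing with $t\mapsto\pi t^2$. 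Second, the comparison $|\Om_{-t}|\ge|Y_{-t}|$ for every $\Om\in\mathcal{K}^2$ with $\omega(\Om)=\omega$, $d(\Om)=d$ in the regime and every $t\in[0,r(\Om)]$. Granting these, Lemma~\ref{lem:main} yields $h(\Om)\le 1/t_{g^\Om}=1/\tau_Y=h(Y)$, and the sharpness is immediate by taking $\Om=Y$, for which the comparison is an equality. The first ingredient is a (delicate) geometric computation; the real difficulty lies in the second.

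The main obstacle is precisely the comparison $|\Om_{-t}|\ge|Y_{-t}|$: it is a ``simultaneously in $t$'' strengthening of the one-parameter area inequality \eqref{min2}, and it does not follow from \eqref{min2} by the usual transitive chain $|\Om_{-t}|\ge(\text{minimal area at }\omega(\Om_{-t}),d(\Om_{-t}))\ge|Y_{-t}|$, because the class $\{K\in\mathcal{K}^2:\omega(K)=\omega,\ d(K)=d\}$ is not stable under $K\mapsto K_{-t}$. As $t$ grows, $\omega(Y_{-t})-\frac{\sqrt3}{2}d(Y_{-t})$ decreases and eventually turns negative, so that $Y_{-t}$ passes from the regime of \eqref{min2} (Yamanouti sets minimize area) into that of \eqref{min1} (triangles minimize area); in particular $Y_{-t}$ stops being the area minimizer for its own width and diameter, and the chain breaks. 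To overcome this I would attempt a direct argument for $\Om\mapsto(|\Om_{-t}|)_{t\ge0}$ being minimized at $Y$: either a shape-derivative computation showing $Y$ is a critical point, for each $t$, of $\Om\mapsto|\Om_{-t}|$ under perturbations preserving $\omega$ and $d$, complemented by a global or second-order argument excluding other configurations; or a symmetrization adapted to the threefold symmetry of $Y$ that preserves $(\omega,d)$ while not increasing any $|\Om_{-t}|$. The numerical simulations underlying Figure~\ref{fig:hwd}, which suggested the conjecture in the first place, indicate that such an argument should exist.
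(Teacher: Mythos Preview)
This statement is a \emph{conjecture} in the paper (Section~\ref{secult}); the authors do not offer a proof, so there is no argument of theirs to compare against. Your proposal must be assessed on its own.

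Your diagnosis of why the naive chain through \eqref{eq:hra}, \eqref{cifre_wrd_eq_2} and \eqref{min2} only recovers the weaker bound \eqref{hdwup_1} is correct and well put: for $\frac{\sqrt3}{2}d<\omega<d$ the Yamanouti set $Y$ has three genuine corners and is not homothetic to its form body, so $h(Y)<\frac{1}{r(Y)}+\sqrt{\pi/|Y|}$ strictly. The reduction you propose via the reverse direction of Lemma~\ref{lem:main} is also sound in principle: if one had $|\Om_{-t}|\ge|Y_{-t}|$ for every admissible $\Om$ and every $t\in[0,r(\Om)]$, the conclusion $h(\Om)\le h(Y)$ would follow exactly as you describe, since $t\mapsto|Y_{-t}|$ meets $t\mapsto\pi t^2$ only at $\tau_Y=1/h(Y)$.

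The genuine gap is the one you yourself flag: the pointwise comparison $|\Om_{-t}|\ge|Y_{-t}|$ is not established, and you do not provide an argument for it, only two heuristic lines of attack (a shape-derivative analysis under $(\omega,d)$-preserving perturbations, or an adapted threefold symmetrization), neither carried out. As written, the proposal replaces the conjecture by another open statement that is at least as strong: that $Y$ simultaneously minimizes the entire family of functionals $\Om\mapsto|\Om_{-t}|$, $t\ge0$, over $\mathcal{K}^2_{\omega,d}$ in this regime. This is a strict strengthening of \eqref{min2} (the case $t=0$), and there is no reason to expect it to be easier. Note also that the numerics behind Figure~\ref{fig:hwd} support $h(\Om)\le h(Y)$ directly, not your stronger pointwise-in-$t$ area inequality; it is conceivable that the latter fails for some intermediate $t$ while the conjecture remains true. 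One refinement worth recording: inspecting the proof of Lemma~\ref{lem:main}, you only need the single inequality $|\Om_{-\tau_Y}|\ge\pi\tau_Y^2$ at $\tau_Y=1/h(Y)$, not the full range of $t$ --- though whether this weaker target is more tractable is unclear, since $\tau_Y$ depends on $(\omega,d)$ implicitly.
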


\begin{conjecture} We consider the diagram $(\omega,h, P)$ plotted in Figure \ref{fig:hpw}. Let $\Om\in \mathcal{K}^2$, we conjecture that, if $\pi\omega(\Om)\le P(\Om)\le 2\sqrt{3}\omega(\Om)$, then
\begin{equation*}
    h(\Omega)\leq h(Y),  
\end{equation*}
where $Y$ is a Yamanouti set (see  Definition \ref{yamanouti}) such that $P(Y)=P(\Om)$ and $\omega(Y)=\omega(\Om)$.
\end{conjecture}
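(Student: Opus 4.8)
The plan is to reduce this conjecture, via Lemma~\ref{lem:main}, to a single ``parametrised'' area inequality for inner parallel sets, and then to prove that inequality — which at present I can only carry out on part of the admissible range.

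\emph{The extremal set.} Fix $\Om$ with $\pi\omega(\Om)\le P(\Om)\le 2\sqrt{3}\,\omega(\Om)$ and let $Y$ be the Yamanouti set with $\omega(Y)=\omega(\Om)$ and $P(Y)=P(\Om)$; write $\rho$ for its arc radius and $T$ for the underlying equilateral triangle. The crucial elementary fact is that the inner parallel sets of $Y$ remain in the same family: for $t\in[0,\rho]$ one has $Y_{-t}=\mathrm{Yam}(T,\rho-t)$, hence $Y_{-t}+tB_1=Y$, and therefore, by the Steiner formula, $P(Y_{-t})=P(Y)-2\pi t$ and $|Y_{-t}|=|Y|-tP(Y)+\pi t^{2}$ on $[0,\rho]$. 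By the characterisation of planar Cheeger sets in \cite{kawohl}, $h(Y)=1/t_Y$, where $t_Y$ is the unique zero of the strictly decreasing map $t\mapsto |Y_{-t}|-\pi t^{2}$ on $[0,r(Y)]$. Hence $t_Y=|Y|/P(Y)$ — so that $Y$ is its own Cheeger set and $h(Y)=P(Y)/|Y|$ — exactly when $|Y|/P(Y)\le\rho$, and $t_Y>\rho$ (with $h(Y)<P(Y)/|Y|$) otherwise.

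\emph{Reduction.} It suffices to prove the following: \emph{for every admissible $\Om$ and every $t\in[0,r(\Om)]$ one has $|\Om_{-t}|\ge |Y_{-t}|$}, where $Y=Y_{\omega(\Om),P(\Om)}$ and $|Y_{-t}|:=0$ for $t\ge r(Y)$ (note this in particular forces $r(\Om)\ge r(Y)$, which is the known extremality of Yamanouti sets for the triplet $(\omega,P,r)$ in this range, see \cite{inequalities_convex}). Indeed, setting $g^{\Om}(t):=|Y_{-t}|$, this supplies the hypotheses of the ``resp.''\ case of Lemma~\ref{lem:main} (the set $G_\Om$ is nonempty since $g^{\Om}(0)=|Y|>0$ and $g^{\Om}(r(Y))=0$ with $r(Y)\le r(\Om)$), the equation $g^{\Om}(t)=\pi t^{2}$ has $t_Y$ as its unique solution, and Lemma~\ref{lem:main} then yields $h(\Om)\le 1/t_Y=h(Y)$, the equality analysis forcing $\Om$ to have the same Cheeger set as $Y$.

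\emph{Proving the area inequality, and the obstacle.} For $t=0$ the inequality is the classical theorem of Yamanouti (\cite{yamanouti}, see also \cite{inequalities_convex}): in the range $\pi\omega\le P\le 2\sqrt{3}\,\omega$ the Yamanouti set minimises the area among convex bodies of prescribed width and perimeter. Combined with the universal bound $h(\Om)\le P(\Om)/|\Om|$ from \eqref{eq:hpa}, this \emph{already settles the conjecture} whenever $|Y|/P(Y)\le\rho$ (i.e.\ for $P(\Om)$ near $\pi\omega(\Om)$, toward the Reuleaux triangle): there $h(\Om)\le P(\Om)/|\Om|\le P(\Om)/|Y|=P(Y)/|Y|=h(Y)$, with equality only for $\Om=Y$. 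For $t>0$ the natural strategy is to apply the $t=0$ statement to $\Om_{-t}$: by \eqref{width}--\eqref{perimeter}, $\omega(\Om_{-t})\le\omega(Y_{-t})$ and $P(\Om_{-t})\le P(Y_{-t})$, so $|\Om_{-t}|$ ought to dominate the area of the Yamanouti set sharing the width and perimeter of $\Om_{-t}$, and one would conclude by monotonicity of that minimal area along the family $\{Y_{-t}\}_t$. This last step is where the difficulty concentrates: first, $\Om_{-t}$ may leave the Yamanouti range (its perimeter-to-width ratio can exceed $2\sqrt{3}$), in which case the extremal set for the area is a subequilateral triangle and one must compare two distinct families; and, more seriously, $\omega(\Om_{-t})$ and $P(\Om_{-t})$ are controlled only from above, so $\Om_{-t}$ could be far ``thinner'' than $Y_{-t}$ — making $|\Om_{-t}|\ge|Y_{-t}|$ a genuinely joint statement about width, perimeter and the areas of inner parallel sets, rather than a corollary of any single known Blaschke--Santal\'o diagram. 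A perturbative alternative — writing first-order optimality for $h$ at a maximiser (which exists by Theorem~\ref{th:existence}) under the constraints that $\omega$ and $P$ are fixed — runs into the non-differentiability of both $h$ and the minimal-width functional and the proliferation of contact constraints, exactly the obstruction that has kept the analogous conjecture for the triplet $(\omega,h,d)$ open.
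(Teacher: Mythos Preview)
First, note that this statement is listed in the paper as an open \emph{conjecture} (Section~\ref{secult}); no proof is given there, so there is nothing to compare against. Your attempt goes beyond the paper, and it rests on a structural claim about Yamanouti sets that is false: you assert $Y_{-t}=\mathrm{Yam}(T,\rho-t)$ and hence $Y_{-t}+tB_1=Y$. But a Yamanouti set retains the three vertices of the underlying equilateral triangle $T$ as sharp $60^\circ$ corners (see Definition~\ref{yamanouti} and Figure~\ref{yama}); since $Y$ has corners it cannot be an outer parallel body, so $Y_{-t}+tB_1\subsetneq Y$ for every $t>0$, and the Steiner-type identities $P(Y_{-t})=P(Y)-2\pi t$ and $|Y_{-t}|=|Y|-tP(Y)+\pi t^{2}$ fail (only the inequalities of Lemma~\ref{lem:diameter_inner_set} hold). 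A one-line check: every $\mathrm{Yam}(T,\cdot)$ has diameter equal to the edge length $L$ of $T$, whereas $d(Y_{-t})\le d(Y)-2t=L-2t<L$ by \eqref{eq:diameter}, so the two sets cannot coincide.

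This error also invalidates your partial result. Because $Y$ has corners it is \emph{not} Cheeger of itself, so $h(Y)<P(Y)/|Y|$ strictly, and the chain $h(\Om)\le P(\Om)/|\Om|\le P(Y)/|Y|=h(Y)$ breaks at the last equality. What remains true is $h(\Om)\le P(Y)/|Y|$ (combining \eqref{eq:hpa} with Yamanouti's area minimisation), but this only bounds $h(\Om)$ by a quantity strictly larger than $h(Y)$ and settles no sub-range of the conjecture. The overall strategy of invoking Lemma~\ref{lem:main} with $g^{\Om}(t)=|Y_{-t}|$ is reasonable, but it requires the correct expression for $|Y_{-t}|$ (taking the corners of $Y$ into account) and then a proof that $|\Om_{-t}|\ge|Y_{-t}|$ for all $t$ --- which, as you yourself acknowledge for $t>0$, is not a consequence of any known inequality and is essentially as hard as the conjecture itself.
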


\begin{conjecture}\label{conj:hdR} We consider the diagram $(R, h, d)$ plotted in Figure \ref{fig:hdR}. Let $\Om\in\K^2$, we conjecture that
\begin{equation*}
    h(\Omega)\geq h(N),  
\end{equation*}
where $N$ refers to a nonagon of constant width such that $d(N) = d(\Om)$ and $R(N)=R(\Om)$ described in \cite{cifre2} as follows:
let $\Gamma$ and $\gamma$ be the circumcircle and the incircle of a constant width set $K$ that are known to be concentric and such that $d(K)=\omega(K)= R(K) + r(K)$. The extremal set can be constructed in the following way: an equilateral triangle $ABC$ is inscribed in the circle $\Gamma$, and now we take the circular arcs of radius $R(K)+r(K)$ drawn about the three vertex points. These arcs touch $\gamma$ at the opposite points $\Bar{A}, \Bar{B}, \Bar{C}$ of $A, B, C$, respectively. Furthermore, we construct three circles of radius $(R(K) + r(K))/2$ that have the sides of the triangle as chords and whose centers lie inside the triangle. The required constant width set has 3-fold symmetry and is formed by nine arcs
of the six constructed circles, see Figure \ref{cifrefig}.
\end{conjecture}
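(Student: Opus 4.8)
The plan is to run the inner-parallel-set argument of Lemma~\ref{lem:main}, for which one first needs a sharp upper bound for the area in terms of the circumradius and the diameter. Concretely, I would want to know that
\[
|\Om|\le \Phi\big(R(\Om),d(\Om)\big)\qquad\text{for all }\Om\in\mathcal{K}^2\text{ with }\sqrt{3}\,R(\Om)\le d(\Om)\le 2R(\Om),
\]
where $\Phi$ is continuous, nondecreasing in each variable, and where equality holds precisely when $\Om$ is the constant-width nonagon $N_{R,d}$ of \cite{cifre2}. This is exactly the still-missing upper boundary of the $(A,R,d)$ Blaschke--Santal\'o diagram, and establishing it is the main obstacle.

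Granting such a $\Phi$, the rest would proceed as in the other propositions of Section~\ref{partial}. For $\Om\in\mathcal{K}^2$ and $t\in[0,r(\Om)]$, Lemma~\ref{lem:diameter_inner_set} gives $R(\Om_{-t})\le R(\Om)-t$ and $d(\Om_{-t})\le d(\Om)-2t$, so applying the area bound to $\Om_{-t}$ and using the monotonicity of $\Phi$ yields
\[
|\Om_{-t}|\le \Phi\big(R(\Om)-t,\,d(\Om)-2t\big)=:g^{\Om}(t).
\]
Since $g^\Om$ is continuous, $g^\Om(0)=\Phi(R(\Om),d(\Om))>0$ and $|\Om_{-r(\Om)}|=0$, the set $G_\Om=\{t:\ g^\Om(t)=\pi t^2\}$ is nonempty, so Lemma~\ref{lem:main} would give $h(\Om)\ge 1/t_{g^\Om}$, with $t_{g^\Om}$ the smallest root of $g^\Om(t)=\pi t^2$ on $[0,r(\Om)]$. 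As $g^\Om$ depends only on $R(\Om)$ and $d(\Om)$, one has $t_{g^\Om}=t_{g^N}$ with $N=N_{R(\Om),d(\Om)}$.

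Next I would verify that the nonagon $N$ saturates this chain, i.e.\ that $1/t_{g^N}=h(N)$. This needs three classical facts about constant-width bodies: (i) the inner parallel set $N_{-t}$ of a constant-width set of width $w$ is again a constant-width set of width $w-2t$, and in fact remains within the one-parameter family of constant-width nonagons; (ii) $R(N_{-t})=R(N)-t$ (which follows since the incircle and circumcircle of a constant-width set are concentric, so $N_{-t}\subseteq B_{R(N)-t}$ touches its boundary); (iii) $d(N_{-t})=\omega(N_{-t})=d(N)-2t$. Then $|N_{-t}|=\Phi(R(N)-t,d(N)-2t)=g^N(t)$ for all $t$, every inequality above becomes an equality for $N$, and the characterization of planar Cheeger sets from \cite[Theorem~1]{kawohl} used in Lemma~\ref{lem:main} gives $h(N)=1/t_{g^N}$. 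Combining, $h(\Om)\ge 1/t_{g^N}=h(N)$ whenever $R(\Om)=R(N)$ and $d(\Om)=d(N)$, with equality forcing $\Om=N$ up to rigid motion.

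The hard part, as noted, is the very first step: proving the sharp area bound $\Phi$ and showing that the constant-width nonagon is the maximizer. This is genuinely delicate because for $\sqrt{3}\,R\le d<2R$ the pair of constraints does \emph{not} force constant width, yet the conjectured maximizer of the area is nonetheless a particular constant-width set interpolating between the Reuleaux triangle ($d=\sqrt{3}\,R$) and the disk ($d=2R$); this is one of the Santal\'o diagrams that are still open. A secondary, more routine difficulty is checking (i)--(iii) above, in particular the stability of the constant-width-nonagon family under the inner parallel operation. Until $\Phi$ is available, plugging the best known area estimate in terms of $R$ and $d$ into the scheme of Lemma~\ref{lem:main} (as in the Appendix) yields only a weaker explicit lower bound that does not reach the conjectured boundary of the diagram.
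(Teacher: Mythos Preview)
The statement is presented in the paper as a \emph{conjecture}; the paper offers no proof, only numerical evidence (Figure~\ref{fig:hdR}). So there is no argument to compare your proposal against. What you have written is a plausible plan of attack that mirrors the method used throughout Section~\ref{partial} (Lemma~\ref{lem:main} combined with a sharp area inequality and Lemma~\ref{lem:diameter_inner_set}), and you correctly isolate the genuine obstruction: the sharp upper bound $|\Om|\le \Phi(R(\Om),d(\Om))$ with equality for the constant-width nonagon is precisely the missing upper boundary of the $(A,R,d)$ diagram, which is one of the six Santal\'o diagrams that remain open.

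There is, however, a second gap that you describe as ``routine'' but which is not. Your step (i) asserts that the inner parallel body $N_{-t}$ again belongs to the one-parameter family of constant-width nonagons. This is false in general. The nonagon $N$ of width $d$ is bounded by arcs of radii $d$ and $d/2$; its inner parallel at distance $t$ is bounded by arcs of radii $d-t$ and $d/2-t$ (with the same centers). For $N_{-t}$ to be the nonagon of width $d-2t$ one would need arcs of radii $d-2t$ and $(d-2t)/2=d/2-t$: the small radii match but the large ones do not. (Already at the Reuleaux endpoint $d=\sqrt{3}R$, the inner parallel of the Reuleaux triangle is a \emph{rounded} Reuleaux body, not a Reuleaux triangle.) Hence $|N_{-t}|$ need not equal $\Phi(R-t,d-2t)$, and the chain of equalities leading to $h(N)=1/t_{g^N}$ breaks; one would only obtain $h(\Om)\ge 1/t_{g^\Om}$ with $1/t_{g^\Om}\le h(N)$, which does not give the conjecture. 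Any successful version of your strategy must either (a) show directly that $N_{-t}$ still maximises the area for the parameters $(R-t,d-2t)$ even though it is not the nonagon, or (b) bypass the inner-parallel reduction altogether.
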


\begin{figure}[h]
    \centering
    \includegraphics[scale=.35]{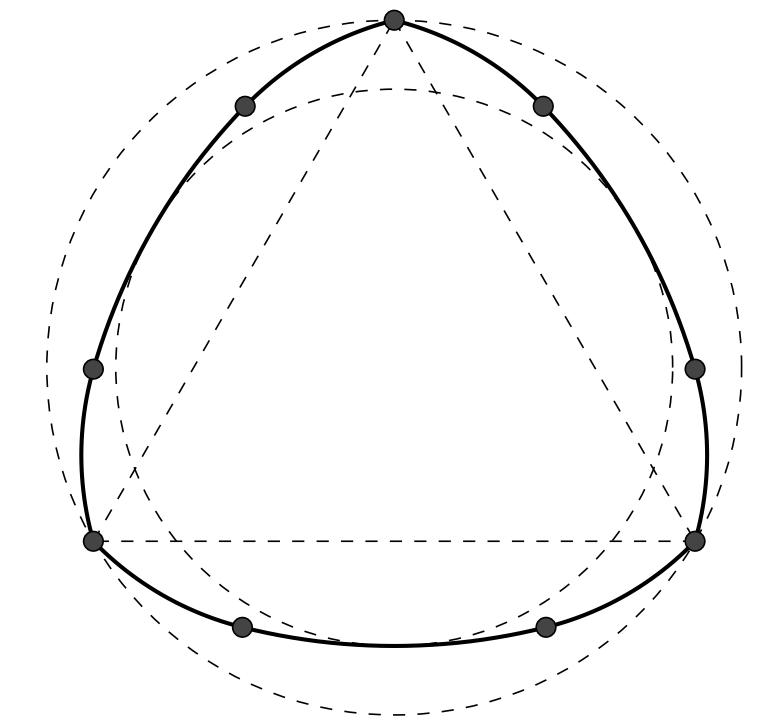}
    \caption{A nonagon of constant width.}
    \label{cifrefig}
\end{figure}

\begin{conjecture} We consider the diagram $(\omega, h,  R)$ plotted in Figure \ref{fig:hwR}. Let $\Om\in \K^2$, we conjecture that:
\begin{itemize}
    \item If $\omega(\Om)\in [\frac{3}{2}R(\Om),\sqrt{3}R(\Om)]$, then 
\begin{equation*}
    h(\Omega)\leq h(Y),  
\end{equation*}
where $Y$ is a Yamanouti set (see  Definition \ref{yamanouti}) such that $\omega(Y) = \omega(\Om)$ and $R(Y)=R(\Om)$. 
    \item If $\omega(\Om)\ge \sqrt{3}R(\Om)$, then
    $$h(\Om)\leq h(N),$$
    where $N$ refers to a nonagon of constant width such that $\omega(N) = \omega(\Om)$ and $R(N)=R(\Om)$.
\end{itemize}
\end{conjecture}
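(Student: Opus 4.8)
The plan is to prove the two upper bounds in the $(\omega,h,R)$ diagram by the same scheme already used successfully throughout the paper for upper bounds on $h$: combine the sharp inequality
$$h(\Om)\le \frac{1}{r(\Om)}+\sqrt{\frac{\pi}{|\Om|}},$$
which holds with equality for sets homothetic to their form bodies (see Proposition \ref{prop_hRw}, \eqref{eq:hra}), with sharp lower bounds on $r(\Om)$ and $|\Om|$ in terms of $\omega(\Om)$ and $R(\Om)$ in the relevant ranges. Since $h$ is increasing in $1/r$ and in $1/|\Om|$, minimizing $r$ and $|\Om|$ (for fixed $\omega,R$) maximizes the upper envelope, and if the two minima are simultaneously attained by the same extremal set $E$ (homothetic to its form body, so that the envelope itself is sharp at $E$), we obtain $h(\Om)\le h(E)$ with equality at $E$. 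This is exactly the mechanism behind \eqref{upper_hRw} in Proposition \ref{prop_hRw}.

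First I would treat the regime $\omega(\Om)\in[\tfrac{3}{2}R(\Om),\sqrt{3}R(\Om)]$, where the conjectured extremizer is the Yamanouti set $Y$. The key obstacle here is that the paper does not yet record the sharp lower bounds $|\Om|\ge|Y|$ and $r(\Om)\ge r(Y)$ for fixed $(\omega,R)$ in this range. I would try to establish these by combining the known $(A,\omega,R)$ inequality \eqref{ARw_low} (sharp for subequilateral triangles, relevant for $\omega\le\tfrac32 R$) with a complementary sharp area lower bound valid when $\omega\ge\tfrac32 R$, analogous to how Yamanouti sets replace triangles in the $(\omega,h,d)$ analysis (see \eqref{min2}); indeed, passing from the $(d,\omega)$ description via $d(Y)$, $\omega(Y)$, $R(Y)$ relations for Yamanouti sets should convert \eqref{min2} into the needed $R$-form. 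Once $|\Om|\ge|Y|$ and $r(\Om)\ge r(Y)$ are in hand, and noting that a Yamanouti set is homothetic to its form body (so \eqref{eq:hra} is sharp on $Y$), the bound $h(\Om)\le h(Y)$ follows immediately by monotonicity, exactly as in Proposition \ref{prop_hRw}.

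Next I would treat the regime $\omega(\Om)\ge\sqrt{3}R(\Om)$, where the extremizer becomes the constant-width nonagon $N$ described in Conjecture \ref{conj:hdR}. Here the first subtlety is that sets of large width relative to circumradius are close to constant-width sets (recall $\omega\le R+r$ by \eqref{santalo_eq_1}, so $\omega\ge\sqrt3 R$ forces $r$ large relative to $R$), and the natural extremal competitors are the constant-width shapes of three-fold symmetry. I would again seek the sharp lower bounds $|\Om|\ge|N|$ and $r(\Om)\ge r(N)$ for fixed $(\omega,R)$, drawing on the $(R,r,\omega)$ inequality \eqref{cifre_gomis_1_eq} and the area description in \cite{cifre2,cifre_salinas}. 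The main conceptual difficulty is verifying that $N$ is (or is homothetic to) its own form body, so that the composite upper bound is genuinely sharp at $N$; if it is not, the composite bound may strictly exceed $h(N)$ and the argument via \eqref{eq:hra} would only give an inequality rather than the desired sharp equality.

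The hardest part will be securing the simultaneous sharp minimization of both $|\Om|$ and $r(\Om)$ by a single extremal shape in each range, since the paper's existing toolbox provides these $(A,\omega,R)$ and $(r,\omega,R)$ lower bounds only for the triangle range $\omega\le\tfrac32 R$, and the extension to Yamanouti and constant-width nonagon extremizers is precisely what remains open (this is why the statement is phrased as a conjecture). A cleaner route might bypass the composite envelope entirely and instead adapt the inner-parallel-set strategy of Lemma \ref{lem:main}: one would need a sharp family of lower bounds $|\Om_{-t}|\ge g^\Om(t)$ attained by the conjectured extremizers, but the non-monotone and piecewise structure of the Yamanouti/nonagon area functionals (mirroring the $\mathcal{N}$ versus $\mathcal{S}$ dichotomy handled in Proposition \ref{prop_hdr}) makes controlling the smallest root of $g^\Om(t)=\pi t^2$ across the full parameter range the central analytic obstacle.
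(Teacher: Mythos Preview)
The statement you are trying to prove is explicitly labeled a \emph{conjecture} in the paper and appears in Section~\ref{secult} among open problems inferred from numerical evidence; the paper offers no proof, so there is nothing to compare your proposal against at the level of an actual argument. Your outline is a reasonable attempt to push the envelope method of Proposition~\ref{prop_hRw} into the missing ranges, and you correctly identify that the needed sharp lower bounds on $|\Om|$ and $r(\Om)$ in terms of $(\omega,R)$ for $\omega\ge\tfrac32 R$ are not available in the cited literature.

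There is, however, a concrete gap in the Yamanouti step beyond the missing geometric inequalities. You assert that ``a Yamanouti set is homothetic to its form body (so \eqref{eq:hra} is sharp on $Y$)'', but this is not established and in fact the paper's own treatment suggests the opposite: in Proposition~\ref{prop_hdw}, the analogous bound built from $r(T_Y)$ and $|T_Y|$ is stated to be sharp only for \emph{equilateral triangles}, not for generic Yamanouti sets. Thus, even granting $|\Om|\ge|Y|$ and $r(\Om)\ge r(Y)$, your scheme would yield $h(\Om)\le \tfrac{1}{r(Y)}+\sqrt{\pi/|Y|}$, which in general strictly exceeds $h(Y)$ and does not deliver the conjectured inequality $h(\Om)\le h(Y)$. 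You raise exactly this concern for the constant-width nonagon $N$, but it applies with equal force to $Y$, so the envelope route via \eqref{eq:hra} cannot close either case without an additional idea. A direct attack would require either a different sharp upper bound for $h$ that is saturated by $Y$ (resp.\ $N$), or a comparison argument not mediated by the $(r,|\cdot|)$ envelope.
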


\newpage

\section{Appendix: Summary tables with the results}
In this first table, we summarize the results relative to the diagrams that are completely solved.
\smaller 
\begin{figure}[!h] \label{table1}
    \centering
\begin{tabular}{|c|c|c|c|c|c|}
\hline
Param. & Condition & Inequality  & Extremal sets & Ref. \tabularnewline
\hline 
$P,h, A$ &  & $\displaystyle h\leq \frac{P}{A}$ & Cheeger of itself & \cite{ftouhi_cheeger}  \tabularnewline
 &  & $\displaystyle h\ge \frac{P+\sqrt{4\pi A}}{2A}$ &  sets that are homothetic to their form bodies &   \cite{ftouhi_cheeger}  \tabularnewline[1ex] \hline
$r, h,A$ &  & $\displaystyle h\leq \frac{1}{r}+\sqrt{\frac{\pi}{A}}$ &  sets that are homothetic to their form bodies & \cite{ftJMAA}  \tabularnewline
&  & $\displaystyle h\ge \frac{1}{r}+\frac{\pi r}{A}$ &  stadiums &  \cite{ftJMAA} \tabularnewline[1ex] \hline
$P,h, r$ &  & $\displaystyle {h\leq \frac{1}{r}+\sqrt{\frac{2\pi }{P r}}}$  & sets that are homothetic to their form bodies & Prop. \ref{prop_hrP}  \tabularnewline
 &  & $\displaystyle {h\ge \frac{1}{r}+\frac{\pi }{P -\pi r}}$ &  stadiums &  Prop. \ref{prop_hrP} \tabularnewline[1ex]
\hline
$d,h,r$  &  & $\displaystyle {h\leq \frac{1}{r}+ \sqrt{\frac{\pi}{r\sqrt{d^2-4r^2}+r^2(\pi-2\arccos{\left(\frac{2r}{d}\right)})}}}$ & two-cup bodies & Prop. \ref{prop_hdr} \tabularnewline[1ex]
 &  & ${h\ge \frac{1}{t_{g_1^\Om}} }$    \textcolor{blue}{$(i)$} & spherical slices/smoothed nonagons & Prop. \ref{prop_hdr} 
 \tabularnewline[1ex]
  &  & $ h > \dfrac{4-\pi}{d+2 r-\sqrt{(d+2r)^2-2(4-\pi)dr}} $ & thinning rectangles & Remark \ref{rk:hdr} \tabularnewline[2ex]
\hline
$ R,h,r$ &  & $\displaystyle h\le \frac{1}{r}+ \sqrt{\frac{\pi }{2r\left(\sqrt{R^2-r^2}+r\arcsin\left(\frac{r}{R}\right)\right)}}$ &  two-cup bodies & Prop. \ref{prop_hRr}  \tabularnewline
 &  & $\displaystyle h\ge\frac{1}{t_{g_2^\Om}}$ \textcolor{blue}{$(ii)$} &  spherical slices &  Prop. \ref{prop_hRr} \tabularnewline[1ex]
  &  & $h > \frac{4-\pi}{2(R+r)-\sqrt{4(R+r)^2-4(4-\pi)Rr}}$  &  thinning rectangles &  Remark \ref{rk:hrR} \tabularnewline[2ex]
\hline
\end{tabular}
    \label{fig:polygons}
\end{figure}
\normalsize
\begin{footnotesize}
\newline
\textcolor{blue}{$(i)$} $t_{g_1^\Om}$ is the smallest solution on $[0, r(\Om)]$ to $$g_1^\Om(t):=\psi(d(\Om)-2t,r(\Om)-t)=\pi t^2, $$ where
\begin{equation*} 
   \psi(d,r):=\begin{cases}
    \displaystyle{\frac{3\sqrt{3}r}{2}(\sqrt{d^2-3r^2}-r)+\frac{3d^2}{2}\left(\frac{\pi}{3}-\arccos{\left(\frac{\sqrt{3}r}{d}\right)}\right)}, & \text{if} \, \, \,  d\le r D^*\vspace{1mm} \\
    \displaystyle{r\sqrt{d^2-4r^2}+\frac{d^2}{2}\arcsin{\left(\frac{2r}{d}\right)}}, & \text{if} \, \, \, d\ge r D^*. 
\end{cases} 
\end{equation*}
and $D^*$ is the unique number in $[2,2\sqrt{3}]$ for which the two expression of the function $\psi(d,r)$ are equal.
\newline
 \textcolor{blue}{$(ii)$}  $t_{g_2^\Om}$ is the smallest solution on $[0, r(\Om)]$ to  $${g_2^\Om}(t):=2\left((r-t)\sqrt{(R(\Omega)-t)^2-(r(\Om)-t)^2}+(R(\Om)-t)^2\arcsin{\left(\frac{r(\Om)-t}{R(\Om)-t}\right)}\right)=\pi t^2.$$
\end{footnotesize}

In this second table, we summarize the results of the partially solved Blaschke--Santal\'o diagrams.

\begin{figure}[!h] \label{table3}
    \centering
\begin{tabular}{|c|c|c|c|c|c|}
\hline
Param. & Condition & Inequality  & Extremal sets & Ref. \tabularnewline
\hline
$ \omega, h, d$ & $\omega\le \sqrt{3}/2d$ & $h\le \frac{1}{r(\omega, d)}+\sqrt{\frac{2\pi}{\omega d}} $ \textcolor{blue}{$(iii)$} &  subequilateral triangles & Prop. \ref{prop_hdw}     \tabularnewline
& $\sqrt{3}/2d\le \omega\le d$ & $  h\le \frac{\sqrt{3}}{\sqrt{3}\omega-d}+\sqrt{\frac{2\pi}{\pi \omega^2-\sqrt{3}d^2+6\omega(\tan\left(\arccos(\frac{\omega}{d})\right)-\arccos(\frac{\omega}{d}))}}$  & equilateral triangles  &  \tabularnewline
 &  & $h \ge \frac{1}{t_{g_3^\Om}}$ \textcolor{blue}{$(iv)$}& spherical slices & \tabularnewline
  &  & $    h>\frac{1}{\omega}+\frac{1}{d}+\sqrt{\left(\frac{1}{\omega}+\frac{1}{d}\right)^2-\frac{4-\pi}{\omega d}} $ & thinning rectangles & Remark \ref{rk:hwd} \tabularnewline
\hline
$ \omega,h, R$ & $\omega\le 3/2 R$ & $h\leq \frac{1}{r(\omega, R)}+\sqrt{\frac{\pi}{A(\omega, R)}}$ \textcolor{blue}{$(v)$} & subequilateral triangles & Prop. \ref{prop_hRw}  \tabularnewline
 &  & $h\ge\frac{1}{t_{g_4^\Om}}$ \textcolor{blue}{$(vi)$} & spherical slices &  \tabularnewline
  &  & $   h\ge\frac{4-\pi}{(2R+\omega)-\sqrt{(2R+\omega)^2-2(4-\pi)R\omega}} $  & thinning rectangles & Remark \ref{rk:hwR}  \tabularnewline
\hline
$ \omega, h, P$ &  $P\geq 2\sqrt{3}\omega$ & $h\le \frac{1}{r(\omega, P)}+\sqrt{\frac{\pi}{A(\omega, P)}} $ \textcolor{blue}{$(vii)$} &  subequilateral triangles & Prop. \ref{prop_hwP}  \tabularnewline
 &  & $  h\ge \frac{2}{\omega}+\frac{2\pi}{2P-\pi \omega}$ &  stadiums &  \tabularnewline
\hline
$ \omega,h, A$ &  & $ h\leq \frac{1}{r(\omega, A)}+ \sqrt{\frac{\pi}{A}}$ \textcolor{blue}{$(viii)$} &  subequilateral triangles & Prop. \ref{prop_hAw}  \tabularnewline
 &  & $ h\geq \frac{2}{\omega}+\frac{\pi \omega}{2 A }$ &  stadiums &  \tabularnewline
\hline
$ R,h,d$ &  & $h\leq \frac{2R(2R+\sqrt{4R^2-d^2})}{d^2\sqrt{4R^2-d^2}}+\sqrt{\frac{4\pi R^2}{d^3\sqrt{4R^2-d^2}}}$ &  subequilateral triangles & Prop. \ref{prop_hRd}  \tabularnewline
\hline
$\omega,h, r$ &  & $h\geq \frac{1}{r}+\frac{1}{r}\sqrt{\pi\left(1-\frac{2r}{\omega}\right)\sqrt{\frac{4r}{\omega}-1}}$ &  subequilateral triangles & Prop. \ref{prop_hwr} 
\tabularnewline
\hline

\end{tabular}
    \label{fig:polygons1}
\end{figure}
\begin{footnotesize}
\noindent \textcolor{blue}{$(iii)$} $r(\omega,d)$ is given by 
   \begin{equation*}
        d^2\left(\omega-2r(\omega,d)\right)^2(4r(\omega,d)-\omega)= 4r^4(\omega,d)\;\omega.
    \end{equation*}
 \newline
\textcolor{blue}{$(iv)$} $t_{g_3^\Om}$ is the smallest solution to
$${g_3^\Om}(t):=f(d(\Om)-2t, \omega(\Om)-2t)=\pi t^2, $$
where
$$f(d,w)=\frac{w}{2}\sqrt{d^2-w^2}+\frac{d^2}{2}\arcsin\left(\frac{w}{d}\right)$$
 \newline
\textcolor{blue}{$(v)$} $r(\omega,R)$ is  given by
\begin{equation*}
    \left(4 r(\omega,R)-\omega   \right)\left( \omega-2 r(\omega,R)\right)= \frac{2 r^3(\omega,R)}{R}
\end{equation*}
and $A(\omega,R)$ is given by 
\begin{equation*}
   16A(\omega,R)^6= R^2 \omega^2\left(16 A(\omega,R)^4-R^2\omega^6\right).
\end{equation*}
\newline
\textcolor{blue}{$(vi)$} $t_{g_4^\Om}$ is the smallest solution on $[0, r(\Om)]$ to
$$g_4^\Om(t):= \chi(\omega(\Om)-2t ,R(\Om)-t)=\pi t^2,$$
where $$\chi(\omega, R):=\frac{\omega}{2} \sqrt{4 R^2-\omega^2} + 2R^2 \arcsin{\frac{\omega}{2R}}.$$
\newline
\textcolor{blue}{$(vii)$} $r(\omega, P)$ is given by
\begin{equation*} 
    (\omega-2r(\omega, P))^2(4r(\omega, P)-\omega)P^2= 4 r(\omega, P)^2\omega^3
\end{equation*}
and $A(\omega, P)$ is the middle root of the equation
 $$128 P A(\omega, P)^3 - 16\omega(5P^2+\omega^2)A(\omega, P)^2 + 16\omega^2 P^3 A(\omega, P)- \omega^3P^4=0$$
\newline 
\textcolor{blue}{$(viii)$} $r(\omega, A)$ is given by 
\begin{equation*} 
    (\omega-2r(\omega, A))^2(4r(\omega, A)-\omega)A^2= r^4(\omega,A) \;\omega^3.
\end{equation*}
\end{footnotesize}

\newpage
Finally, in this last table, we have  summarized the inequalities that we have found and that do not correspond to parts of the boundaries of the corresponding Blaschke--Santal\'o diagrams.

\begin{figure}[!h] \label{table5}
    \centering
\begin{tabular}{|c|c|c|c|c|c|}
\hline
Param. & Condition & Inequality  & Extremal sets & Ref. \tabularnewline
\hline 
$R,h,A$ &  & $   h<\frac{1}{R}+\frac{4R}{A}$ &  thinning rectangles & \cite{ftouhi_cheeger,inequalities_convex}      \tabularnewline
 &  &   $ h\ge \frac{1}{2R}+\frac{\pi R}{2A}+\sqrt{\frac{\pi}{A}}$ & balls & \tabularnewline
\hline
$P,h, R$ & & $ h<\frac{P}{R(P-4R)}$ & thinning rectangles  & \cite{ftouhi_cheeger, santalo}      \tabularnewline
 & &  $  h \ge \frac{4\arcsinc{\left(\frac{4R}{P}\right)}}{P-4R\cos{\left(\arcsinc{\left(\frac{4R}{P}\right)}\right)}} + \sqrt{\frac{8\pi \arcsinc{\left(\frac{4R}{P}\right)}}{P\left(P-4R\cos{\left(\arcsinc{\left(\frac{4R}{P}\right)}\right)}\right)}}$ \textcolor{blue}{$(ix)$}& balls&  \tabularnewline
\hline

$ P,h, d$ & $2d<P<3d$ & $  h < \frac{4}{P-2d} + \sqrt{\frac{4\pi}{(P-2d)\sqrt{P(4d-P)}}}$ &   & \cite{ftJMAA, ftouhi_cheeger, inequalities_convex}  
\tabularnewline
& $3 d\leq P\leq \pi d$ & $   h < \frac{4}{P-2d} + \sqrt{\frac{4\pi}{\sqrt{3} d(P-2d)}}$  &   &  \tabularnewline
 &  &  $ h \ge \frac{4 \arcsinc{\left(\frac{2d}{P}\right)}}{P-2d\cos{\left(\arcsinc{\left(\frac{2d}{P}\right)}\right)}} + \sqrt{\frac{8\pi\arcsinc{\left(\frac{2d}{P}\right)}}{(P-2d\cos{\left(\arcsinc{\left(\frac{2d}{P}\right)}\right)}}}$ \textcolor{blue}{$(ix)$}& balls & \tabularnewline
\hline
$ d,h,A$ && $ h\leq  \frac{4}{d}+\frac{2d}{A}$ &   & \cite{ftJMAA, ftouhi_cheeger, inequalities_convex}       \tabularnewline
&  & $  h< \frac{2 d}{A}+\sqrt{\frac{\pi}{A}}$  &   &  \tabularnewline
 &  &  $h> \frac{d}{A}+\sqrt{\frac{\pi}{A}}$& thinning two-cup & \tabularnewline
\hline

\end{tabular}
\end{figure}

\begin{footnotesize}
\noindent\textcolor{blue}{$(ix)$} where the cardinal-arcsine  is defined as
\begin{equation*}\label{sinc}
    {\rm sinc}:x\in\mathbb{R}\longmapsto {\rm sinc}(x)=\frac{\sin(x)}{x}, \quad{\rm arcsinc}(x)={\rm sinc}^{-1}(x)
\end{equation*}
\end{footnotesize}

\medskip\noindent

{\bf Acknowledgements}: The author would also like to thank the anonymous referees for their careful reading and useful comments that helped
to improve the manuscript.

The authors would also like to thank Jimmy Lamboley for useful discussions. 

I. Ftouhi and G. Paoli are supported by the Alexander von Humboldt Foundation through Alexander von Humboldt grants for Postdocs.

A. L. Masiello and G. Paoli are partially supported by GNAMPA of INdAM. 


\bibliographystyle{abbrv}
\bibliography{biblio.bib}

\end{document}